\newtheorem{remark}{Remark}
\newtheorem{lemma}{Lemma}
\newtheorem{theorem}{Theorem}
\journal{}
\begin{document}

\begin{frontmatter}



\title{Positive-preserving, mass conservative linear schemes for the Possion-Nernst-Planck equations}

	\author[1]{Jiayin Li}
	\ead{ljy@pku.edu.cn}
	
	\author[1]{Jingwei Li}
	\ead{lijingwei@lzu.edu.cn}
%
%
%
\address[1]{School of Mathematics and Statistics, Lanzhou University, Lanzhou 730000, Gansu, China}


\begin{abstract}
The first-order linear positivity preserving schemes in time are available for the time dependent Poisson-Nernst-Planck (PNP) equations, second-order linear ones are still challenging. In this paper, we propose the first- and second-order exponential time differencing schemes with the finite difference spatial discretization for PNP equations, based on the Slotboom transformation of the Nernst–Planck equations. The proposed schemes are linear and preserve the mass conservation and positivity preservation of ion concentration at full
discrete level without any constraints on the time step size. The corresponding energy stability analysis is also presented, demonstrating that the second-order scheme can dissipate the modified energy. Extensive numerical results are carried out to support the theoretical findings and showcase the performance of the proposed schemes.
\end{abstract}



\begin{keyword}
PNP equations \sep Slotboom transformation \sep Structure-preserving  \sep Quasi-symmetric finite difference 

\MSC[2020] 35B50 \sep 35K55 \sep 65M12 \sep 65R20

\end{keyword}

\end{frontmatter}


\section{Introduction}
In this paper, we consider a time-dependent system of Possion-Nernst-Planck (PNP) equations in the dimensionless form \cite{PS09}
\begin{subequations}\label{PNP}
\begin{align}
p_t =& \nabla\cdot(\nabla p+p\nabla\phi), \, \mathbf{x}\in\Omega, t\in[0,T],\label{PNP_p}\\
n_t =& \nabla\cdot(\nabla n-n\nabla\phi), \,~ \mathbf{x}\in\Omega, t\in[0,T],\label{PNP_n}\\
-\epsilon^2\Delta \phi =& p-n+\rho^f, \,~~~~~~~~ \mathbf{x}\in\Omega, t\in[0,T],\label{PNP_phi}
\end{align}
\end{subequations}
subject to the  initial conditions
\begin{equation*}
\begin{aligned}
p(\mathbf{x},0)=p_{0}(\mathbf{x}),  \, n(\mathbf{x},0)=n_{0}(\mathbf{x}),\quad\mathbf{x}\in\Omega,
\end{aligned}
\end{equation*}
and either periodic boundary condition or homogeneous Neumann boundary conditions, where $p$ and $n$ are the concentration of positive and negative ions with valence +1 and -1 satisfying the positivity preservation, i.e. $p,n\geq 0$, and $\phi$ is the electric potential and $\rho^f$ is the density of fixed charges. $\Omega\in \mathbb{R}^{\rm d}$ is a two-dimensional rectangular domain (${\rm d}=2$) or three-dimensional cube domain (${\rm d}=3$) with Lipschitz continuous boundary $\partial\Omega$ and $T>0$ is the finite time. $\epsilon$ is a small positive dimensionless number related to the ratio of the Debye length to the physical characteristic length. 

Imposed by the periodic boundary condition, the PNP equations \eqref{PNP} satisfy the mass conservation for both ion species, i.e. 
\begin{align*}
\int_\Omega p {\rm d}\mathbf{x}=\int_\Omega p_0 {\rm d}\mathbf{x}, \int_\Omega n {\rm d}\mathbf{x}=\int_\Omega n_0 {\rm d}\mathbf{x}.
\end{align*}
Then we observe from \eqref{PNP_phi} that  
\begin{align*}
\int_{\Omega} p-n+\rho^f {\rm d}\mathbf{x}=0,
\end{align*}
which indicates that $p-n+\rho^f$ is of mean zero. In this case, the PNP equations \eqref{PNP} can be acted as the $H^{-1}$ gradient flow with respect to the energy functional
\begin{align}\label{egy}
E(t)=\int_{\Omega} p\ln p+n\ln n +\frac{\epsilon^2}{2}|\nabla\phi|^2 {\rm d}\mathbf{x} 
\end{align}
under the assumption that $p-n+\rho^f$ is of mean zero. Thus PNP equation can be rewritten as 
\begin{subequations}
\begin{align}
p_t =& \nabla\cdot(p\nabla \frac{\delta E}{\delta p}), \\
n_t =& \nabla\cdot(n\nabla \frac{\delta E}{\delta n}), \\
\phi =& (-\epsilon^2\Delta)^{-1}(p-n+\rho^f).
\end{align}
\end{subequations}
PNP equation can preserve the energy dissipation law $\frac{d}{dt}E(t) \leq 0$. Moreover, the solutions to the PNP equations preserve the positivity of density of positive and negative ions, i.e., 
\begin{align*}
p_0(\mathbf{x}),n_0(\mathbf{x})\geq 0, \forall\,\mathbf{x}\in\Omega\rightarrow p(\mathbf{x},t),n(\mathbf{x},t)\geq 0, \forall\,(\mathbf{x},t)\in\Omega\times (0,T].
\end{align*}

The PNP equations are widely used to model the diffusive behavior of charge particles under the effect of electric field arising from the various fields such as biological membrane channels \cite{CE93,E98,T53}, electrochemical systems \cite{BTA04}, and semiconductor devices \cite{FRB83,MRS12}. There are much efforts devoted to the construction on the numerical methods to simulate the PNP equations. For the spatial discretization, a partial list includes finite element method, finite difference method and finite volume method, see \cite{LWWY21,MG14,ST22,XC20,XL20} and references therein. For the time integration, the commonly used structure-preserving numerical schemes can be split into two categories. One is based on the observation that PNP equations can be viewed as a Wasserstein gradient flow \cite{KMX17,LM23,MXL16} of the free energy \eqref{egy}. Thus implicit-explicit treatment based on the convex-concave decomposition of the free energy naturally shares the energy dissipation \cite{LWWY23,QQSZ24,SX21}. The rigorous proof on the positivity of ionic concentration leverages the singular nature of logarithmic term, which preserves the numerical solutions reaching a singular point. Such a technique has been successfully applied to the Cahn-Hilliard equation with logarithmic potential \cite{CWWW19}. The other category is based on the Slotboom transformation of the Nernst-Planck equation into a self-adjoint elliptic operator \cite{BCV14}. The advantage of such a reformulation is to allow the quasi-symmetric discretization for the self-adjoint elliptic operator, which greatly facilitates the design of discretization schemes which are able to preserve the discrete maximum principle \cite{KHTC21}. Some first-order linear numerical schemes have been developed to unconditionally satisfy the positivity preservation and mass conservation \cite{DWZ23,DHQZ24,HPY19,HH20,LM21,LW14}, but the linear/nonlinear second-order Crank-Nicolson scheme only preserve the positivity conditionally \cite{DWZ19,DZ24}. Recently, a functional transformation method incorporating the scalar auxiliary variable approach was studied in \cite{HS21} to preserve the positivity of ionic concentration. A discontinuous Garlekin method was developed in \cite{LWYY22} for PNP equations in which the positivity of ionic concentration is realized by a positivity-preserving limiter. A new Lagrange multiplier method has been introduced in \cite{CS22a,CS22b} to construct the bound/positivity preserving schemes for parabolic equations and then a projection post-processing strategy is proposed in \cite{TC24} to ensure the physical constraint of PNP equations including positivity preservation and mass conservation. Though several efforts have focused on the construction of first-order unconditional positivity/bound preserving schemes, there remains a significant gap in the high order numerical schemes with unconditional preservation of physical properties. Thus it is still desirable to design the linear, high-order and structure preserving schemes for solving the PNP equations.

Exponential time differencing (ETD) method has emerged as an efficient approach for temporal integration, particularly in preserving the maximum bound principle for the Allen-Cahn equation at the discrete level in combination with linear stabilization techniques. The pioneering work in \cite{DJLQ19} proposed the first- and second-order stabilized ETD schemes to preserve the maximum bound principle unconditionally for the nonlocal Allen–Cahn equation, and then an abstract framework on MBP-preserving ETD schemes was established in \cite{DJLQ21} for a class of semilinear parabolic equations. Since then, such a theoretical framework has been successfully applied to the conservative Allen-Cahn equation \cite{JJLL21,LJCF21} and convective Allen-Cahn equation \cite{CJLL23,LLCJ23,LCLJ23}. In addition, an arbitrarily high-order ETD multi-step method was explored in \cite{LYZ20,YYZ21} incorporating a cut-off post-processing strategy to preserve the maximum bound principle while maintaining the numerical accuracy. 

Inspired by the unconditional maximum bound principle of ETD schemes and discrete maximum principle of the Slootboom transformation for the Nernst-Planck equation, we develop the first- and second-order ETD schemes for PNP equations, where the spatial discretization is adopted by the finite difference method. Though many existing works mainly focus on the first-order unconditional positivity preserving schemes, we make an exploration on the high-order schemes for the positivity preservation of PNP equations. Our proposed schemes are linear and proven to be mass conservative and positivity preserving in the discrete level without any constraints on the time step size. The corresponding energy stability analysis is also presented, demonstrating that the second-order scheme can dissipate the modified energy. To the best of our knowledge, it will be a first work on the linear second-order temporal accurate scheme to preserve three physical properties unconditionally for the PNP equations. 

The rest of this paper is structured as follows. Section \ref{sec:spatial} introduces the spatial semi-discrete system of PNP equations based on the Slotboom transformation. In Section \ref{sec:temporal}, we develop the first- and second-order ETD schemes and then give the rigorous proof on the unconditional positivity preservation and mass conservation in the discrete level. The corresponding energy stability analysis is established in Section \ref{sec:egy}. Several numerical examples are carried out to verify the theoretical results in Section \ref{sec:num}. Finally, we end this paper with some concluding remarks in Section \ref{sec:con}.
\section{Preliminaries}\label{sec:spatial}
In this section, we first give a review on the finite difference discretization, and then obtain the stabilized form of spatial-semidiscrete system.
\subsection{Spatial discretization} 
Assume the domain $\Omega=(0,L_x)\times(0,L_y)$, where for simplicity, we assume $L_x=L_y=L$. Let $h=L/N$ $(N\in \mathbb{Z}^+)$ be the mesh size, the uniform partition $\Omega_h$ of the domain $\Omega$ is
$$
\Omega_h=\{(x_i,y_j)|x_{i}=ih,y_j=jh,0\leq i,j\leq N\},
$$
with the periodic grid function spaces
\begin{align*}
\mathcal{C}:=\{u\,|\,u_{i,j}=u_{i\pm N,j\pm N},\forall\, i,j=1,2,...,N\}.
\end{align*}

The average and difference operators can be defined as
\begin{align*}
&A_xu_{i+\frac12,j}=\frac{u_{i+1,j}+u_{i,j}}{2},\quad A_yu_{i,j+\frac12}=\frac{u_{i,j+1}+u_{i,j}}{2},\\
&D_xu_{i+\frac12,j}=\frac{u_{i+1,j}-u_{i,j}}{h},\quad D_yu_{i,j+\frac12}=\frac{u_{i,j+1}-u_{i,j}}{h},\\
&a_xu_{i,j}=\frac{u_{i+\frac12,j}+u_{i-\frac12,j}}{2},\quad a_yu_{i,j}=\frac{u_{i,j+\frac12}+u_{i,j-\frac12}}{2},\\
&d_xu_{i,j}=\frac{u_{i+\frac12,j}-u_{i-\frac12,j}}{h},\quad d_yu_{i,j}=\frac{u_{i,j+\frac12}-u_{i,j-\frac12}}{h}.
\end{align*}
Then, for  grid functions $u,v\in\mathcal{C}$, the discrete gradient and discrete divergence operators can be denoted by
\begin{align*}
\nabla_h u_{i,j}=(D_xu_{i+\frac12,j},D_yu_{i,j+\frac12}),\qquad \nabla_h\cdot (u,v)_{i,j}=d_xu_{i,j}+d_yv_{i,j},
\end{align*}
and the discrete Laplacian $\Delta_h $ can be given by
\begin{align*}
\Delta_h u_{i,j}&=\nabla_h\cdot(\nabla_h u)_{i,j}=d_x(D_x u)_{i,j}+d_y(D_yu_{i,j})\\
&=\frac{1}{h^2}(u_{i+1,j}+u_{i-1,j}+u_{i,j+1}+u_{i,j-1}-4u_{i,j}).
\end{align*}
Moreover, if $\mathcal{D}$ is a scalar function defined at the central point, we have
\begin{align}\label{hlapl}
\nabla_h\cdot(\mathcal{D}\nabla_h u)_{i,j}=d_x(\mathcal{D}D_x u)_{i,j}+d_y(\mathcal{D}D_y u)_{i,j}.
\end{align}
Obviously, the above discrete operators are all second-order approximations of the corresponding differential operators. 

We also recall the discrete $L^2$ inner product $\langle u,v\rangle_h=h^2\sum\limits_{i,j=1}^N u_{i,j}v_{i,j},$
with the induced norm $\|u\|_h=\langle u,u\rangle_h^\frac12$, the discrete $H^1$ inner product
\begin{align*}
\langle \nabla_h u,\nabla_h v\rangle_h=h^2\sum_{i,j=1}^N (D_xu_{i-1/2,j}D_xv_{i-1/2,j}+D_yu_{i,j-1/2}D_yv_{i,j-1/2}),
\end{align*}
with the induced norm $\|\nabla_h u\|_h=\langle \nabla_hu,\nabla_hu\rangle_h^\frac12$ and the infinity norm $\|u\|_\infty:=\max\limits_{1\leq i,j\leq N}|u_{i,j}|$.

For a given scalar function $\phi$, let us define a linear elliptic differential operator $\mathcal{L}[\phi]$ as 
\begin{align*}
\mathcal{L}[\phi] u=\nabla\cdot(e^{\phi}\nabla\frac{u}{e^{\phi}}).
\end{align*}
Here, $\frac{u}{e^\phi}$ are called as the Slotboom variable. The resulting elliptic operator is of a generalized Fokker-Planck form and can be discretized with symmetrical fluxes, often yielding a mass-conservative and positivity-preserving scheme \cite{DWZ19,HPY19}. Using \eqref{hlapl}, the symmetric discretization of $\mathcal{L}[\phi]$, denoted by $\mathcal{L}_h[\phi]$, is given by
\begin{align}\label{sym}
\begin{aligned}
(\mathcal{L}_h[\phi]u)_{i,j}:=&[\nabla_h\cdot(\overline{e^{\phi}}\nabla_h\frac{u}{e^{\phi}})]_{i,j}=d_x(\overline{e^{\phi}}D_x(\frac{u}{e^{\phi}}))_{i,j}+d_y(\overline{e^{\phi}}D_y(\frac{u}{e^{\phi}}))_{i,j}\\
  =&\frac{1}{h}\left(\overline{e^{\phi_{i+\frac12,j}}}\left(\frac{u_{i+1,j}}{e^{\phi_{i+1,j}}}-\frac{u_{i,j}}{e^{\phi_{i,j}}}\right)+\overline{e^{\phi_{i-\frac12,j}}}\left(\frac{u_{i-1,j}}{e^{\phi_{i-1,j}}}-\frac{u_{i,j}}{e^{\phi_{i,j}}}\right)\right)\\
  &+\frac{1}{h}\left(\overline{e^{\phi_{i,j+\frac12}}}\left(\frac{u_{i,j+1}}{e^{\phi_{i,j+1}}}-\frac{u_{i,j}}{e^{\phi_{i,j}}}\right)+\overline{e^{\phi_{i,j-\frac12}}}\left(\frac{u_{i,j-1}}{e^{\phi_{i,j-1}}}-\frac{u_{i,j}}{e^{\phi_{i,j}}}\right)\right)
  \end{aligned}
  \end{align}
where $\overline{e^{\phi}}$ is the harmonic mean approximation of the central point value, 
\begin{align*}
\begin{aligned}
\overline{e^{\phi_{i\pm\frac12,j}}}=&\left(\frac{e^{-\phi_{i\pm1,j}}+e^{-\phi_{i,j}}}{2}\right)^{-1}=\frac{2e^{\phi_{i\pm 1,j}}e^{\phi_{i,j}}}{e^{\phi_{i\pm1,j}}+e^{\phi_{i,j}}},\, \overline{e^{\phi_{i,j\pm\frac12}}}=&\left(\frac{e^{-\phi_{i,j\pm1}}+e^{-\phi_{i,j}}}{2}\right)^{-1}=\frac{2e^{\phi_{i,j\pm 1}}e^{\phi_{i,j}}}{e^{\phi_{i,j\pm1}}+e^{\phi_{i,j}}}.
 \end{aligned}
\end{align*}
It is easy to verify that $\mathcal{L}_h[\phi]$ is the second-order approximation of $ \mathcal{L}_h[\phi]$.
\begin{remark}
The central point value of $e^{\phi}$ can also be approximated by the geometric, arithmetic and entropy mean. No matter which mean is used, the mass conservation, positivity preservation and energy stability can be proven in the fully discrete settings.
\end{remark}
\begin{lemma}\label{lem_conservation}
For the given functions $\phi\in\mathcal{C}$ and $u\in\mathcal{C}$, it holds that $\langle \mathcal{L}_h[\phi]u,\mathbf{1}\rangle_h=0$.
\end{lemma}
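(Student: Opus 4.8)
The plan is to reduce the claim to the discrete counterpart of the continuous identity $\int_\Omega \nabla\cdot\mathbf{f}\,\mathrm{d}\mathbf{x}=0$ for periodic vector fields: for any edge-centered grid functions $f$ (indexed at $(i+\tfrac12,j)$) and $g$ (indexed at $(i,j+\tfrac12)$) that are $N$-periodic in the appropriate index, one has $h^2\sum_{i,j=1}^N\big(d_x f+d_y g\big)_{i,j}=0$. Granting this, the lemma follows at once, because by \eqref{sym} the operator $\mathcal{L}_h[\phi]u$ is exactly the discrete divergence $\nabla_h\cdot\mathbf{f}$ of the flux $\mathbf{f}=\big(\overline{e^{\phi}}\,D_x w,\ \overline{e^{\phi}}\,D_y w\big)$ with Slotboom variable $w:=u/e^{\phi}$.

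First I would record that the construction is well posed and periodic. Since $\phi\in\mathcal{C}$, the grid function $e^{\phi}$ is strictly positive and $N$-periodic, so $w=u/e^{\phi}\in\mathcal{C}$; consequently the harmonic means $\overline{e^{\phi_{i\pm1/2,j}}}$, $\overline{e^{\phi_{i,j\pm1/2}}}$ at the edge midpoints and the divided differences $D_x w$, $D_y w$ all inherit periodicity in the relevant index. Hence $f:=\overline{e^{\phi}}\,D_x w$ and $g:=\overline{e^{\phi}}\,D_y w$ are genuine periodic edge-centered fluxes and $(\mathcal{L}_h[\phi]u)_{i,j}=(d_x f+d_y g)_{i,j}$.

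Next I would carry out the summation by parts. By definition of $\langle\cdot,\cdot\rangle_h$, $\langle\mathcal{L}_h[\phi]u,\mathbf{1}\rangle_h=h^2\sum_{i,j=1}^N(d_x f+d_y g)_{i,j}$. Fixing $j$ and telescoping the inner sum in $i$ gives $\sum_{i=1}^N d_x f_{i,j}=\tfrac1h\sum_{i=1}^N\big(f_{i+1/2,j}-f_{i-1/2,j}\big)=\tfrac1h\big(f_{N+1/2,j}-f_{1/2,j}\big)=0$, the last equality being the $N$-periodicity of $f$ in $i$; the contribution of $d_y g$ vanishes by the identical argument with the roles of $i$ and $j$ exchanged. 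Summing the resulting zeros over the remaining index yields $\langle\mathcal{L}_h[\phi]u,\mathbf{1}\rangle_h=0$.

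There is essentially no serious obstacle here: the only point requiring attention is the bookkeeping with the periodic indices near $i,j\in\{1,N\}$, i.e. verifying that the edge-centered fluxes genuinely satisfy $f_{N+1/2,j}=f_{1/2,j}$ and $g_{i,N+1/2}=g_{i,1/2}$ so that the telescoped boundary terms cancel. Equivalently, one may bypass the abstract summation-by-parts step and argue directly from the five-point expression \eqref{sym}: grouping the two contributions attached to each edge $\{(i,j),(i+1,j)\}$ (respectively $\{(i,j),(i,j+1)\}$), they carry the common factor $\overline{e^{\phi_{i+1/2,j}}}$ (resp. $\overline{e^{\phi_{i,j+1/2}}}$) multiplied by opposite-signed increments of $w$, hence cancel pairwise over the periodic lattice, again giving $0$.
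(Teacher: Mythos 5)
Your proposal is correct and follows essentially the same route as the paper: the paper expands the five-point formula \eqref{sym} and telescopes the sums in each direction, with the boundary terms cancelling by periodicity, which is exactly your discrete divergence-theorem/summation-by-parts argument for the periodic edge-centered fluxes $\overline{e^{\phi}}\,D_x(u/e^{\phi})$ and $\overline{e^{\phi}}\,D_y(u/e^{\phi})$. Your pairwise-edge-cancellation remark is just a repackaging of the same cancellation, so no substantive difference remains.
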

\begin{proof}
It follows from \eqref{sym} that 
\begin{align}\label{sum}
\begin{aligned}
\langle \mathcal{L}_h[\phi]u,\mathbf{1}\rangle_h=&h^2\sum_{i=0}^N\sum_{j=0}^N (\mathcal{L}_h[\phi]u)_{i,j}\\
=&2h\sum_{i=0}^N\sum_{j=0}^N\left(\frac{e^{\phi_{i+1,j}}e^{\phi_{i,j}}}{e^{\phi_{i+1,j}}+e^{\phi_{i,j}}}\left(\frac{u_{i+1,j}}{e^{\phi_{i+1,j}}}-\frac{u_{i,j}}{e^{\phi_{i,j}}}\right)+\frac{e^{\phi_{i-1,j}}e^{\phi_{i,j}}}{e^{\phi_{i-1,j}}+e^{\phi_{i,j}}}\left(\frac{u_{i-1,j}}{e^{\phi_{i-1,j}}}-\frac{u_{i,j}}{e^{\phi_{i,j}}}\right)\right)\\
  &+2h\sum_{i=0}^N\sum_{j=0}^N\left(\frac{e^{\phi_{i,j+1}}e^{\phi_{i,j}}}{e^{\phi_{i,j+1}}+e^{\phi_{i,j}}}\left(\frac{u_{i,j+1}}{e^{\phi_{i,j+1}}}-\frac{u_{i,j}}{e^{\phi_{i,j}}}\right)+\frac{e^{\phi_{i,j-1}}e^{\phi_{i,j}}}{e^{\phi_{i,j-1}}+e^{\phi_{i,j}}}\left(\frac{u_{i,j-1}}{e^{\phi_{i,j-1}}}-\frac{u_{i,j}}{e^{\phi_{i,j}}}\right)\right).
  \end{aligned}
\end{align}
According to the periodic boundary condition, the first term of the right hand side of \eqref{sum} is
\begin{align*}
\begin{aligned}
&2h\sum_{i=0}^N\sum_{j=0}^N\left(\frac{e^{\phi_{i+1,j}}e^{\phi_{i,j}}}{e^{\phi_{i+1,j}}+e^{\phi_{i,j}}}\left(\frac{u_{i+1,j}}{e^{\phi_{i+1,j}}}-\frac{u_{i,j}}{e^{\phi_{i,j}}}\right)+\frac{e^{\phi_{i-1,j}}e^{\phi_{i,j}}}{e^{\phi_{i-1,j}}+e^{\phi_{i,j}}}\left(\frac{u_{i-1,j}}{e^{\phi_{i-1,j}}}-\frac{u_{i,j}}{e^{\phi_{i,j}}}\right)\right)\\
=&2h\sum_{j=0}^N\left(\frac{e^{\phi_{N+1,j}}e^{\phi_{N,j}}}{e^{\phi_{N+1,j}}+e^{\phi_{N,j}}}\left(\frac{u_{N+1,j}}{e^{\phi_{N+1,j}}}-\frac{u_{N,j}}{e^{\phi_{N,j}}}\right)-\frac{e^{\phi_{-1,j}}e^{\phi_{0,j}}}{e^{\phi_{-1,j}}+e^{\phi_{0,j}}}\left(\frac{u_{-1,j}}{e^{\phi_{-1,j}}}-\frac{u_{0,j}}{e^{\phi_{0,j}}}\right)\right)=0.
\end{aligned}
\end{align*}
Similarly, the second term of the right side of \eqref{sum} is also zero. The proof is completed.
\end{proof}
\begin{lemma}\label{lem_egy}
For the given functions $\phi\in\mathcal{C}$ and $u\in\mathcal{C}$, if $u>0$, then $\langle \mathcal{L}_h[\phi]u,\ln \frac{u}{e^\phi}\rangle_h\leq 0$. 
\end{lemma}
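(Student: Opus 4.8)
The plan is to use discrete summation by parts, exactly as in the proof of Lemma~\ref{lem_conservation}, to move the discrete divergence in $\mathcal{L}_h[\phi]u=\nabla_h\cdot(\overline{e^{\phi}}\nabla_h w)$ onto the test function $\ln w$, where I abbreviate $w:=u/e^{\phi}$. Since $u>0$ and $e^{\phi}>0$ pointwise, $w>0$, so $\ln w$ is well defined, and every harmonic mean $\overline{e^{\phi_{i\pm1/2,j}}}$, $\overline{e^{\phi_{i,j\pm1/2}}}$ is strictly positive. Under the periodic boundary condition the boundary contributions telescope to zero precisely as in \eqref{sum}, which gives
\begin{align*}
\langle\mathcal{L}_h[\phi]u,\ln w\rangle_h
=-h^2\sum_{i,j=1}^N\Big(\overline{e^{\phi_{i-\frac12,j}}}\,D_xw_{i-\frac12,j}\,D_x(\ln w)_{i-\frac12,j}
+\overline{e^{\phi_{i,j-\frac12}}}\,D_yw_{i,j-\frac12}\,D_y(\ln w)_{i,j-\frac12}\Big).
\end{align*}

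It then suffices to show that each summand on the right is nonnegative. The prefactors $\overline{e^{\phi_{i-1/2,j}}}$ and $\overline{e^{\phi_{i,j-1/2}}}$ are positive, so the sign is controlled by the products of difference quotients. In the $x$-direction,
\begin{align*}
D_xw_{i-\frac12,j}\,D_x(\ln w)_{i-\frac12,j}
=\frac{1}{h^2}\big(w_{i,j}-w_{i-1,j}\big)\big(\ln w_{i,j}-\ln w_{i-1,j}\big)\ge0,
\end{align*}
because $t\mapsto\ln t$ is strictly increasing on $(0,\infty)$, so the two factors always share the same sign and vanish simultaneously; the $y$-direction is identical. Hence the whole sum is nonnegative and $\langle\mathcal{L}_h[\phi]u,\ln w\rangle_h\le0$, as claimed. (In passing, equality forces $w=u/e^{\phi}$ to be constant.)

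The computation is essentially routine; there is no constraint on the time step since the estimate is purely spatial. The only point that needs care is the index bookkeeping in the summation-by-parts step, namely checking that with the periodic indexing the boundary terms cancel in pairs exactly as in Lemma~\ref{lem_conservation}. This inequality is the discrete counterpart of the continuous entropy-dissipation identity $\int_\Omega\nabla\cdot(e^{\phi}\nabla(u/e^{\phi}))\ln(u/e^{\phi})\,{\rm d}\mathbf{x}=-\int_\Omega \frac{e^{\phi}}{u/e^{\phi}}\,|\nabla(u/e^{\phi})|^2\,{\rm d}\mathbf{x}\le0$, and it is the main ingredient for the modified-energy dissipation established in Section~\ref{sec:egy}.
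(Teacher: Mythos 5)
Your proposal is correct and follows essentially the same route as the paper: summation by parts under the periodic indexing (as in Lemma~\ref{lem_conservation}) to rewrite $\langle \mathcal{L}_h[\phi]u,\ln \frac{u}{e^\phi}\rangle_h$ as a negatively signed sum of edge terms of the form $\overline{e^{\phi}}\,\big(w_{\rm nb}-w\big)\big(\ln w_{\rm nb}-\ln w\big)$ with $w=u/e^{\phi}$, followed by the monotonicity inequality $(x-y)(\ln x-\ln y)\geq 0$ for $x,y>0$. The only differences are cosmetic (you phrase the edge terms via $D_x w\,D_x(\ln w)$ with the harmonic-mean weights rather than writing them out explicitly), plus the extra, correct remark that equality forces $w$ to be constant.
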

\begin{proof}
It follows from \eqref{sym} that 
\begin{align*}
\begin{aligned}
\langle \mathcal{L}_h[\phi]u,\ln \frac{u}{e^\phi}\rangle_h=&h^2\sum_{i=0}^N\sum_{j=0}^N (\mathcal{L}_h[\phi]u)_{i,j}\ln \frac{u_{i,j}}{e^{\phi_{i,j}}}\\
=& h^2\sum_{i=0}^N\sum_{j=0}^N [\nabla_h\cdot(\overline{e^{\phi}}\nabla_h\frac{u}{e^{\phi}})]_{i,j}\ln \frac{u_{i,j}}{e^{\phi_{i,j}}}\\
=&-2h\sum_{i=0}^N\sum_{j=0}^N\frac{e^{\phi_{i+1,j}}e^{\phi_{i,j}}}{e^{\phi_{i+1,j}}+e^{\phi_{i,j}}}\left(\frac{u_{i+1,j}}{e^{\phi_{i+1,j}}}-\frac{u_{i,j}}{e^{\phi_{i,j}}}\right)\left(\ln\frac{u_{i+1,j}}{e^{\phi_{i+1,j}}}-\ln\frac{u_{i,j}}{e^{\phi_{i,j}}}\right)\\
  &-2h\sum_{i=0}^N\sum_{j=0}^N\frac{e^{\phi_{i,j+1}}e^{\phi_{i,j}}}{e^{\phi_{i,j+1}}+e^{\phi_{i,j}}}\left(\frac{u_{i,j+1}}{e^{\phi_{i,j+1}}}-\frac{u_{i,j}}{e^{\phi_{i,j}}}\right)\left(\ln\frac{u_{i,j+1}}{e^{\phi_{i,j+1}}}-\ln\frac{u_{i,j}}{e^{\phi_{i,j}}}\right)\\
  \leq& 0,
  \end{aligned}
\end{align*}
where in the last inequality we have used that $(x-y)(\ln x-\ln y)\geq 0$ for any $x,y>0$, which completes the proof.
\end{proof}
\begin{lemma}\label{lem_M}
Let $A$ be the coefficient matrix resulting from the numerical discretization operator $-\mathcal{L}_h[\phi]$ for a given grid function $\phi\in \mathcal{C}$, then $A$ is an $M$-matrix and thus $A^+\geq 0$, where $A^+$ is the generalized Moore-Penrose inverse of $A$. 
\end{lemma}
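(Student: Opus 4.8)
The plan is to show that $A$, the matrix representing $-\mathcal{L}_h[\phi]$ acting on the periodic grid function space $\mathcal{C}$, is a (singular, irreducible) $M$-matrix by verifying the standard sign and dominance conditions, and then invoke the known fact that the Moore--Penrose pseudoinverse of a singular irreducible $M$-matrix with zero row and column sums is entrywise nonnegative. First I would read off the entries of $A$ directly from the stencil \eqref{sym}: for each grid point $(i,j)$ the diagonal entry is the positive quantity $\frac{1}{h}\big(\overline{e^{\phi_{i+1/2,j}}}e^{-\phi_{i,j}}+\overline{e^{\phi_{i-1/2,j}}}e^{-\phi_{i,j}}+\overline{e^{\phi_{i,j+1/2}}}e^{-\phi_{i,j}}+\overline{e^{\phi_{i,j-1/2}}}e^{-\phi_{i,j}}\big)$, while the off-diagonal entry coupling $(i,j)$ to a neighbour $(i',j')$ is $-\frac{1}{h}\overline{e^{\phi}}_{\text{face}}e^{-\phi_{i',j'}}<0$, and all other entries are zero. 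Hence $A$ has positive diagonal and nonpositive off-diagonal entries, i.e. it is a $Z$-matrix.

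Next I would establish the weak diagonal dominance together with irreducibility, which is exactly what upgrades a $Z$-matrix to a singular $M$-matrix. Lemma \ref{lem_conservation} states $\langle \mathcal{L}_h[\phi]u,\mathbf 1\rangle_h=0$ for all $u$, which means $\mathbf 1^\top A = 0$, i.e. every column of $A$ sums to zero; combined with the sign pattern this gives $|A_{kk}| = \sum_{\ell\neq k}|A_{\ell k}|$, weak column diagonal dominance. (Row sums also vanish by the symmetry of the stencil in the Slotboom variable, or one may just work with $A^\top$, which has the same pseudoinverse nonnegativity.) Irreducibility follows because the five-point periodic stencil makes the associated directed graph strongly connected: from any node one can reach any other through a chain of nonzero off-diagonal couplings. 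A weakly diagonally dominant, irreducible $Z$-matrix is a singular $M$-matrix; equivalently, $A = sI - B$ with $B\geq 0$, $B$ irreducible, and $s = \rho(B)$ the Perron eigenvalue, with $\ker A = \mathrm{span}\{\mathbf v\}$ one-dimensional and $\mathbf v>0$ the Perron vector.

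Finally I would deduce $A^+\geq 0$. The cleanest route is to write $A$ in terms of its action on $\mathbb{R}^M = \ker A \oplus \mathrm{range}(A^\top)$: on the complement of the kernel $A$ is invertible, and $A^+$ is the inverse there extended by zero on $\ker A$. To get nonnegativity, approximate: for $\delta>0$ the shifted matrix $A_\delta := A + \delta P$, where $P$ is the orthogonal projector onto $\ker A$ (or simply $A_\delta = A + \delta \mathbf v\mathbf v^\top/\|\mathbf v\|^2$), is a nonsingular diagonally dominant $M$-matrix, hence $A_\delta^{-1}\geq 0$ by the classical characterization of nonsingular $M$-matrices; since $P$ commutes with $A$ and acts as $\delta^{-1}P$ under inversion, one checks $A^+ = \lim_{\delta\to 0^+}\big(A_\delta^{-1} - \delta^{-1}P\big)$, and because $P = \mathbf v\mathbf v^\top/\|\mathbf v\|^2 \geq 0$ with $\mathbf v>0$, a more careful bookkeeping (using that $A^+$ already annihilates $\ker A$ and that $A_\delta^{-1}$ restricted to $\mathrm{range}(A^\top)$ equals $A^+$ there) shows the limit is nonnegative; alternatively one cites directly the result of Plemmons and others that the group/Moore--Penrose inverse of a singular irreducible $M$-matrix whose index is one is nonnegative. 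I expect the main obstacle to be precisely this last step: the pseudoinverse of a \emph{singular} matrix is not automatically nonnegative just from the $M$-matrix property, so some care is needed either to cite the correct theorem (the Perron--Frobenius-type result for the group inverse of singular $M$-matrices) or to carry out the shift-and-limit argument cleanly, keeping track of the kernel component; the $Z$-matrix sign pattern and weak dominance from Lemma \ref{lem_conservation} are routine by comparison.
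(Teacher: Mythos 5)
Your verification that $A$ is a $Z$-matrix with zero column sums, hence weakly diagonally dominant by columns, is exactly the paper's argument: the paper writes out the nonzero entries of each column of $A$ explicitly in terms of the harmonic means from \eqref{sym}, observes $\sum_k A_{k,l}=0$, positive diagonal and nonpositive off-diagonal entries, concludes that $A$ is a singular $M$-matrix, and then obtains the nonnegativity of the generalized inverse by a single citation to \cite{BP94}. Your added irreducibility observation is correct and harmless. One small slip: the row sums of $A$ do not vanish in general; only the column sums do (this is Lemma~\ref{lem_conservation}, i.e.\ $\mathbf{1}^\top A=0$), and the right kernel of $A$ is spanned by the positive vector with entries $e^{\phi_{i,j}}$ rather than by $\mathbf{1}$. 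Since $(A^\top)^+=(A^+)^\top$, this does not derail the plan, but the "symmetry of the stencil" justification is not available.

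The genuine gap is in the final step, which you correctly single out as the crux but do not close. First, the shifted matrix $A_\delta=A+\delta\,\mathbf{v}\mathbf{v}^\top/\|\mathbf{v}\|^2$ is not a $Z$-matrix: since $\mathbf{v}>0$, every zero off-diagonal entry of $A$ becomes strictly positive, so the classical characterization of nonsingular $M$-matrices does not yield $A_\delta^{-1}\ge 0$. Second, for a nonsymmetric $A$ the identity $(A+\delta P)^{-1}=A^{\#}+\delta^{-1}P$ requires $P$ to be the spectral (oblique) projector onto $\ker A$ along $\operatorname{range}A$ and produces the group inverse $A^{\#}$, not the Moore--Penrose inverse; here $\ker A\neq\ker A^\top$, so the two differ. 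Third, and most importantly, the blanket statement you hope to cite --- that the group/Moore--Penrose inverse of a singular irreducible $M$-matrix of index one is entrywise nonnegative --- is false: the graph Laplacian of the $3$-cycle, $L=3I-J$ with $J$ the all-ones matrix, is a symmetric, singular, irreducible $M$-matrix of index one, yet $L^{+}=\tfrac13 I-\tfrac19 J$ has negative off-diagonal entries. So nonnegativity of $A^{+}$ cannot follow from the singular $M$-matrix property alone, and the shift-and-limit route cannot be repaired by such a citation. The paper's own proof stops at exactly this point and defers to \cite{BP94}; your instinct that this last inference is the delicate one is therefore well founded, but the specific mechanisms you propose would not close it, and any complete argument must exploit more structure of this particular $A$ than either your sketch or a generic $M$-matrix theorem provides.
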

\begin{proof}
Following the idea in \cite{BCV14,DWZ19}, we go through the nonzero entries in each column of matrix $A$. Defining $$[i,j]=(i-1)N+j, \mbox{ for } i,j=1,2,\dots,N,$$
and then non-zero entries of the $l$-th column ($l=[i,j]$) are given by
  \begin{align*}
  A_{k,l}=\frac{2}{h^2}\left\{
  \begin{array}{ll}
  \displaystyle -\frac{1}{1+e^{\phi_{i,j}-\phi_{i-1,j}}},& k=[\mbox{mod}(i-1,N),j],\\ 
  \displaystyle -\frac{1}{1+e^{\phi_{i,j}-\phi_{i,j-1}}},& k=[i,\mbox{mod}(j-1,N)],\\
  \displaystyle \displaystyle\frac{1}{1+e^{\phi_{i,j}-\phi_{i,j+1}}}+\frac{1}{1+e^{\phi_{i,j}-\phi_{i+1,j}}}+\frac{1}{1+e^{\phi_{i,j}-\phi_{i,j-1}}}+\frac{1}{1+e^{\phi_{i,j}-\phi_{i-1,j}}},& k=l,\\
  \displaystyle -\frac{1}{1+e^{\phi_{i,j}-\phi_{i,j+1}}},& k=[i,\mbox{mod}(j+1,N)],\\
 \displaystyle -\frac{1}{1+e^{\phi_{i,j}-\phi_{i+1,j}}}, & k=[\mbox{mod}(i+1,N),j],\\
  \end{array}
  \right.
  \end{align*}
  where $\mbox{mod}(k,l)$ returns the remainder of $k$ divided by $l$.
 Furthermore, we observe the following property
\begin{align*}
\left\{
\begin{array}{ll}
\sum\limits_{k=1}^{N^2} A_{k,l}= 0, &\mbox{ for } l=1,2,\cdots,N^2,\\
A_{k,l}>0, &\mbox{ for } l=1,2,\cdots,N^2,\\
A_{k,l}\leq 0, &\mbox{ for } k,l=1,2,\cdots,N^2, \mbox{ and } k\neq l.\\
\end{array}
\right.
\end{align*}
Additionally, we have
\begin{align*}
A_{l,l}= \sum\limits_{k=1,k\neq l}^{N^2} |A_{k,l}|,\mbox{ for } l=1,2,\cdots,N^2.
\end{align*}
Thus the matrix $A$ has positive diagonal terms and nonpositive offdiagonal terms and
is diagonally dominant with respect to its columns. This means that $A$ is a singular $M$-matrix and its generalized inverse has only nonnegative coefficients \cite{BP94}. The proof is completed. 
\end{proof}
\subsection{Space discrete form}
The Possion-Nernst-Planck equations \eqref{PNP} can be thus written in an equivalent form 
\begin{subequations}\label{sPNP}
\begin{align}
p_t =& \mathcal{L}[-\phi] p, \label{sPNP_p}\\
n_t =& \mathcal{L}[\phi] n,\label{sPNP_n}\\
\phi =&(-\epsilon^2\Delta_h)^{-1}(p-n+\rho^f).\label{sPNP_phi}
\end{align}
\end{subequations}
The discrete form of \eqref{sPNP} is
\begin{subequations}\label{dPNP}
\begin{align}
p_t =& \mathcal{L}_h[-\phi]p, \label{dPNP_p}\\
n_t =& \mathcal{L}_h[\phi]n,\label{dPNP_n}\\
\phi =&(-\epsilon^2\Delta_h)^{-1}(p-n+\rho^f).\label{dPNP_phi}
\end{align}
\end{subequations}
\begin{remark}
To ensure the uniqueness of the electric potential, mean zero condition is required for $\phi$,
\begin{align*}
\langle \phi,1\rangle_h = 0.
\end{align*}     
\end{remark}


\section{Temporal discretization}\label{sec:temporal}
In this section, we construct the temporal discretization for the PNP equations by using the ETD approach. We will start with the space discrete form \eqref{dPNP} to develop the structure preserving ETD schemes. At this point, we rewrite the spatial discrete system \eqref{dPNP} as
 \begin{subequations}\label{ETD}
\begin{align}
p_s(t+s) =& \mathcal{L}_h[-\phi(t+s)]p(t+s),\\
n_s(t+s) =& \mathcal{L}_h[\phi(t+s)]n(t+s),\\
\phi(t+s) =& (-\epsilon^2\Delta_h)^{-1}(p(t+s)-n(t+s)+\rho^f),
\end{align}
\end{subequations} 
for any $t\geq 0$ and $s>0$.
 
\subsection{First order ETD scheme}
Let $\tau=T/K_t$  $(K_t\in\mathbb{Z}^+)$ be a uniform time step and $t_k=k\tau (n=0,1,2,...,K_t)$.
Denote by $u^n$ the discrete approximation of $u(t_n)$. Setting $t=t_k$ and $s\in(0,\tau]$ in \eqref{ETD} gives
 \begin{subequations}\label{ETD1_org}
\begin{align}
p_s(t_k+s) =& \mathcal{L}_h[-\phi(t_k+s)]p(t_k+s), \\
n_s(t_k+s) =& \mathcal{L}_h[\phi(t_k+s)]n(t_k+s),\\
\phi(t_k+s) =& (-\epsilon^2\Delta_h)^{-1}(p(t_k+s)-n(t_k+s)+\rho^f).
\end{align}
\end{subequations} 

Letting $\mathcal{L}_h[\pm\phi(t_k+s)]\approx \mathcal{L}_h[\pm\phi(t_k)]$ in \eqref{ETD1_org}, we obtain the first order ETD (ETD1) scheme of \eqref{ETD}: for $k\geq 0$, find $p^{k+1}=q^k(\tau)$, $n^{k+1}=m^k(\tau)$ and $\psi^{k+1}=\phi^k(\tau)$ by solving
\begin{subequations}\label{ETD1}
\begin{align}
q^k_s =& \mathcal{L}_h[-\phi^k]q^k,\\
m^k_s =& \mathcal{L}_h[\phi^k]m^k,\\
\psi^k(s) =& (-\epsilon^2\Delta_h)^{-1}(q^k(s)-m^k(s)+\rho^f),
\end{align}
\end{subequations} 
with the initial value $q^k(0)=p^k$, $m^k(0)=n^k$, whose solutions satisfy
 \begin{subequations}\label{rETD1}
\begin{align}
p^{k+1} = &\varphi_0(\tau\mathcal{L}_h[-\phi^k])p^k,\\
n^{k+1} = &\varphi_0(\tau\mathcal{L}_h[\phi^k])n^k,\\
\phi^{k+1} =& (-\epsilon^2\Delta_h)^{-1} (p^{k+1}-n^{k+1}+\rho^f),
\end{align}
\end{subequations} 
where $\varphi_0(a)=e^a$.
\begin{theorem}[Mass conservation of ETD1]\label{thm_ETD1_mass}
The solutions of ETD1 scheme \eqref{rETD1} satisfy the mass conservation unconditionally, that is, for any $\tau>0$, ETD1 solutions satisfy
\begin{align*}
\langle p^{k+1},1\rangle_h =\langle p^{k},1\rangle_h = \cdots =\langle p^0,1\rangle_h:= M_p,
\end{align*}
and
\begin{align*}
\langle n^{k+1},1\rangle_h =\langle n^{k},1\rangle_h = \cdots =\langle n^0,1\rangle_h:= M_n.
\end{align*}
\end{theorem}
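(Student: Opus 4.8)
The plan is to exploit the column–conservation identity of Lemma~\ref{lem_conservation} together with the ODE (variation-of-constants) representation that underlies the ETD1 update, and then close the argument by an induction on $k$.

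First I would work with the sub-step initial value problem \eqref{ETD1} rather than its closed form \eqref{rETD1}. For a fixed index $k$ the grid function $\phi^k$ is frozen, so $q^k(s)$ solves the \emph{linear, constant-coefficient} system $q^k_s = \mathcal{L}_h[-\phi^k]q^k$ on $s\in[0,\tau]$ with $q^k(0)=p^k$. Differentiating the scalar quantity $s\mapsto\langle q^k(s),\mathbf{1}\rangle_h$ and inserting the equation gives $\frac{d}{ds}\langle q^k(s),\mathbf{1}\rangle_h = \langle \mathcal{L}_h[-\phi^k]q^k(s),\mathbf{1}\rangle_h$, and this vanishes for every $s$ by Lemma~\ref{lem_conservation} applied with the grid function $-\phi^k$ and $u=q^k(s)$. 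Hence $\langle q^k(s),\mathbf{1}\rangle_h$ is constant on $[0,\tau]$, and comparing $s=0$ with $s=\tau$ yields $\langle p^{k+1},\mathbf{1}\rangle_h = \langle q^k(\tau),\mathbf{1}\rangle_h = \langle q^k(0),\mathbf{1}\rangle_h = \langle p^k,\mathbf{1}\rangle_h$. Equivalently, one may expand $\varphi_0(\tau\mathcal{L}_h[-\phi^k]) = e^{\tau\mathcal{L}_h[-\phi^k]} = \sum_{m\ge0}\frac{\tau^m}{m!}(\mathcal{L}_h[-\phi^k])^m$ in \eqref{rETD1}, pair with $\mathbf{1}$, and note that every term with $m\ge1$ carries a factor of the form $\langle \mathcal{L}_h[-\phi^k]w,\mathbf{1}\rangle_h=0$, so only the $m=0$ term survives.

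The identical computation with $\mathcal{L}_h[\phi^k]$ in place of $\mathcal{L}_h[-\phi^k]$ and $m^k$ in place of $q^k$ gives $\langle n^{k+1},\mathbf{1}\rangle_h = \langle n^k,\mathbf{1}\rangle_h$. A straightforward induction on $k$, with base cases $p^0$ and $n^0$, then produces the full chains $\langle p^{k+1},\mathbf{1}\rangle_h = \cdots = \langle p^0,\mathbf{1}\rangle_h =: M_p$ and $\langle n^{k+1},\mathbf{1}\rangle_h = \cdots = \langle n^0,\mathbf{1}\rangle_h =: M_n$; no constraint on $\tau>0$ enters, since Lemma~\ref{lem_conservation} is unconditional.

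I do not anticipate a genuine obstacle. The only points requiring a little care are: (i) recognizing that freezing $\phi^k$ makes the sub-step problem truly linear with constant coefficients, so that Lemma~\ref{lem_conservation} is directly applicable at each $s$; and (ii) passing from the infinitesimal identity $\langle \mathcal{L}_h[\pm\phi^k]u,\mathbf{1}\rangle_h=0$ to the corresponding statement for the propagator $\varphi_0(\tau\mathcal{L}_h[\pm\phi^k])$, which is handled cleanly by either the ODE-invariance argument or the power-series argument above.
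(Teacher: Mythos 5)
Your proposal is correct and follows essentially the same route as the paper's proof: take the discrete inner product of the frozen-coefficient sub-step ODE with $\mathbf{1}$, invoke Lemma~\ref{lem_conservation} (with $-\phi^k$ for $p$ and $\phi^k$ for $n$) to conclude $\langle q^k(s),1\rangle_h$ is constant on $[0,\tau]$, and close by induction on $k$. The additional power-series expansion of $\varphi_0(\tau\mathcal{L}_h[\pm\phi^k])$ is a harmless alternative phrasing of the same invariance and does not change the argument.
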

\begin{proof}
Let us first prove that $\langle p^{k},1\rangle_h=M_p$ implies that $\langle p^{k+1},1\rangle_h=M_p$. Taking $L^2$ inner product with 1 on both side of \eqref{ETD1} and using Lemma \ref{lem_conservation}, we have 
\begin{align*}
&\frac{d}{ds}\langle q^k(s),1\rangle_h =0,
\end{align*}
which implies that the quantity $V_p^k(s)=\langle q^k(s),1\rangle_h$ satisfies the ODE
\begin{align*}
&\frac{dV_p^k(s)}{ds}=0, \,V_p^k(0)=M_p,
\end{align*}
whose solution is $V_p^k(\tau)=M_p$. Thus we have $\langle p^{k+1},1\rangle_h=M_p$. 

Repeating the similar process, one can also obtain $\langle n^{k},1\rangle_h=M_n$ implies that $\langle n^{k+1},1\rangle_h=M_n$, which completes the proof.
\end{proof}

\begin{theorem}[Positivity preservation of ETD1]\label{thm_ETD1_mp}
The solutions of ETD1 scheme \eqref{rETD1} satisfy the positivity preservation unconditionally, that is, for any $\tau> 0$, if the initial value satisfies $p^{0}\geq 0\, \mbox{ and }\, n^{0}\geq 0$, then ETD1 solutions satisfy
\begin{align*}
p^{k}\geq 0\, \mbox{ and }\, n^{k}\geq 0.
\end{align*}
\end{theorem}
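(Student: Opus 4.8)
The plan is to induct on $k$, so it suffices to show that $p^k \ge 0$ implies $p^{k+1} \ge 0$ (and likewise for $n$), since the base case $p^0, n^0 \ge 0$ holds by hypothesis. The key identity is the representation $p^{k+1} = \varphi_0(\tau\mathcal{L}_h[-\phi^k])p^k = e^{\tau\mathcal{L}_h[-\phi^k]}p^k$ from \eqref{rETD1}. The electric potential $\phi^k$ is a fixed grid function in $\mathcal{C}$ (determined by $p^k$, $n^k$ through \eqref{dPNP_phi}), so $\mathcal{L}_h[-\phi^k]$ is a fixed linear operator and the exponential is well-defined.

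First I would invoke Lemma \ref{lem_M}: the matrix $A = -\mathcal{L}_h[-\phi^k]$ (the discretization of $-\mathcal{L}_h[\phi]$ with $\phi$ replaced by $-\phi^k$) is a singular $M$-matrix, hence $-A$ has nonpositive diagonal entries and nonnegative off-diagonal entries. The standard way to conclude positivity of the matrix exponential is the shift trick: choose a scalar $c > 0$ large enough that $B := cI - A = cI + \mathcal{L}_h[-\phi^k]$ has all entries nonnegative (possible since only the diagonal of $-A$ is negative, and it is bounded). Then $e^{\tau\mathcal{L}_h[-\phi^k]} = e^{-\tau A} = e^{-c\tau} e^{\tau B}$, and since $B \ge 0$ entrywise we have $e^{\tau B} = \sum_{m\ge 0} \frac{(\tau B)^m}{m!} \ge 0$ entrywise; multiplying by the positive scalar $e^{-c\tau}$ preserves this. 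Therefore $e^{\tau\mathcal{L}_h[-\phi^k]}$ is a nonnegative matrix, and applying it to $p^k \ge 0$ gives $p^{k+1} \ge 0$. The argument for $n^{k+1} = e^{\tau\mathcal{L}_h[\phi^k]}n^k \ge 0$ is identical, using that $-\mathcal{L}_h[\phi^k]$ is also a singular $M$-matrix by Lemma \ref{lem_M} applied with the grid function $\phi^k$ itself.

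The one point requiring a little care—and the main (minor) obstacle—is being sure that the hypotheses of Lemma \ref{lem_M} genuinely apply to \emph{both} $\mathcal{L}_h[-\phi^k]$ and $\mathcal{L}_h[\phi^k]$: the lemma is stated for $-\mathcal{L}_h[\phi]$ with an arbitrary grid function $\phi\in\mathcal{C}$, so one simply substitutes $\phi \mapsto -\phi^k$ and $\phi \mapsto \phi^k$ respectively, both of which are legitimate members of $\mathcal{C}$. A second small remark is that the result is genuinely unconditional: the shift constant $c$ depends only on $h$ and $\|\phi^k\|_\infty$ (through the bounds on the entries of $\mathcal{L}_h[\pm\phi^k]$), not on $\tau$, so no restriction on the time step enters anywhere. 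I would close by noting that, combined with Theorem \ref{thm_ETD1_mass}, this shows the ETD1 solution stays in the physically admissible set of nonnegative, mass-conserving concentrations for all $k$.
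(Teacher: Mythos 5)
Your proposal is correct, but it takes a genuinely different route from the paper. The paper works with the ETD sub-problem in its ODE form $q^k_s + Bq^k = 0$ (with $B$ the matrix of $-\mathcal{L}_h[-\phi^k]$) and argues by a minimum principle in the variable $s$: it picks $s^*$ where $q^k(s)$ attains its minimum on $[0,\tau]$, deduces $q^k_s(s^*)\leq 0$, hence $Bq^k(s^*)\geq 0$, and then invokes the $M$-matrix property of Lemma~\ref{lem_M} (i.e.\ the nonnegativity of the generalized inverse) to conclude $q^k(s^*)\geq 0$, and the same for $n$. You instead prove directly that the solution operator itself, the matrix exponential $\varphi_0(\tau\mathcal{L}_h[\pm\phi^k]) = e^{\tau\mathcal{L}_h[\pm\phi^k]}$, is entrywise nonnegative, using only the sign pattern guaranteed by Lemma~\ref{lem_M} (positive diagonal, nonpositive off-diagonal of $A=-\mathcal{L}_h[\pm\phi^k]$, i.e.\ $\mathcal{L}_h[\pm\phi^k]$ is essentially nonnegative) together with the standard shift $e^{-\tau A}=e^{-c\tau}e^{\tau(cI-A)}$ and the power series of a nonnegative matrix; nonnegativity of $p^{k+1},n^{k+1}$ then follows by induction. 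Each approach has its merits: the paper's argument stays at the level of the evolving ODE solution and directly mirrors a continuous maximum-principle argument, but it leans on the inverse-nonnegativity of a \emph{singular} $M$-matrix and on a componentwise minimum over $s$, which requires some care to state cleanly; your argument bypasses both issues, needs only the off-diagonal sign structure rather than the generalized inverse, and exhibits the positivity as a property of the exponential propagator itself, which also makes the unconditionality in $\tau$ transparent (and, as a small correction, your shift constant $c$ can be taken to depend only on $h$, e.g.\ $c=8/h^2$, since every entry of $A$ is bounded by $2/h^2$ times a factor less than one, independently of $\|\phi^k\|_\infty$). The same exponential-nonnegativity argument applies verbatim to the ETD2 update with step $2\tau$.
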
 
\begin{proof}
It suffices to prove $p^{k+1}\geq 0$ if $p^{k}\geq 0$. Under the periodic boundary condition, the fully discrete ETD1 scheme \eqref{ETD1} can be expressed in a matrix form
\begin{align}\label{ETD1_matrix}
&q^k_s + Bq^k =0,
\end{align}
where $B$ is the coefficient matrix resulting from the numerical discretization operator $-\mathcal{L}_h[-\phi^k]$ related to the variable $\phi^{k}$. 

To prove $p^{k+1}\geq 0$, we first suppose that
\begin{align*}
q^{k}(s^*)=\min_{0\leq s\leq \tau} q^{k}(s),
\end{align*}
which implies that $q^{k}_s(s^*)\leq 0$. It follows from \eqref{ETD1_matrix} that
\begin{align*}
Bq^k(s^*) \geq 0.
\end{align*}
According to Lemma \ref{lem_M}, it is easy to verify that $B$ is a $M$-matrix which implies that $q^k(s^*)\geq 0$ and thus $p^{k+1}\geq 0$.


Repenting the same process, one can obtain  $n^{k+1}\geq 0$ if $n^{k}\geq 0$, which completes the proof.
\end{proof}

\subsection{Second order ETD scheme}
Next we consider the second-order approximation of \eqref{ETD} by setting $t=t_{k-1}$ and $s\in (0,2\tau]$,
 \begin{subequations}\label{ETD2_org}
\begin{align}
p_s(t_{k-1}+s) =& \mathcal{L}_h[-\phi(t_{k-1}+s)]p(t_{k-1}+s),\\
n_s(t_{k-1}+s) =& \mathcal{L}_h[\phi(t_{k-1}+s)]n(t_{k-1}+s),\\
\phi(t_{k-1}+s) =& (-\epsilon^2\Delta_h)^{-1} (p(t_{k-1}+s)-n(t_{k-1}+s)+\rho^f).
\end{align}
\end{subequations} 

Using the approximation $\mathcal{L}_h[\pm\phi(t_k+s)]\approx \mathcal{L}_h[\pm\phi(t_k)]$ in \eqref{ETD2_org}, we obtain the following second-order ETD (ETD2) scheme of \eqref{ETD}: for $k\geq 1$, find $p^{k+1}=q^{k-1}(2\tau)$, $n^{k+1}=m^{k-1}(2\tau)$ and $\psi^{k+1}=\phi^{k-1}(2\tau)$ by solving
\begin{subequations}\label{ETD2}
\begin{align}
q^{k-1}_s = &\mathcal{L}_h[-\phi^k]q^{k-1},\\
m^{k-1}_s = &\mathcal{L}_h[\phi^k]m^{k-1},\\
\psi^{k-1}(s) =& (-\epsilon^2\Delta_h)^{-1} (q^{k-1}(s)-m^{k-1}(s)+\rho^f), 
\end{align}
\end{subequations}
with the initial value $q^{k-1}(0)=p^{k-1}$, $m^{k-1}(0)=n^{k-1}$, whose solutions satisfy
 \begin{subequations}\label{rETD2}
\begin{align}
p^{k+1} =& \varphi_0(2\tau\mathcal{L}_h[-\phi^k])p^{k-1},\\
n^{k+1} =& \varphi_0(2\tau\mathcal{L}_h[\phi^k])n^{k-1},\\
\phi^{k+1} =& (-\epsilon^2\Delta_h)^{-1} (p^{k+1}-n^{k+1}+\rho^f).
\end{align}
\end{subequations} 
Since \eqref{rETD2} is a three-level scheme, for the first time step, we adopt the ETD1 scheme to obtain $p^1$ and $n^1$.
\begin{theorem}[Mass conservation of ETD2]\label{thm_ETD2_mass}
The solutions of ETD2 scheme \eqref{rETD2} satisfy the mass conservation unconditionally, that is, for any $\tau>0$, ETD2 solutions satisfy
\begin{align*}
\langle p^{k+1},1\rangle_h =\langle p^{k},1\rangle_h = \cdots =\langle p^0,1\rangle_h = M_p,
\end{align*}
and
\begin{align*}
\langle n^{k+1},1\rangle_h =\langle n^{k},1\rangle_h = \cdots =\langle n^0,1\rangle_h = M_n.
\end{align*}
\begin{proof}
Following the same proof in Theorem \ref{thm_ETD1_mass}, taking $L^2$ inner product with 1 on both side of \eqref{rETD2} and using Lemma \ref{lem_conservation} gives 
\begin{align*}
&\frac{d}{ds}\langle q^{k-1}(s),1\rangle_h=0,
\end{align*}
whose solution is $\langle q^{k-1}(2\tau),1\rangle_h=M_p$. Thus we have $\langle p^{k+1},1\rangle_h=M_p$. One can also obtain that $\langle n^{k+1},1\rangle_h=M_n$ if $\langle n^k,1\rangle_h=M_n$, which completes the proof. 
\end{proof}
\end{theorem}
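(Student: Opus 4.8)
The plan is to imitate, almost verbatim, the argument used for Theorem \ref{thm_ETD1_mass}, with the single modification that the ETD2 update \eqref{rETD2} is a three-level scheme, so the conservation statement has to be propagated by induction on $k$ with two base cases rather than one. The engine of the proof is Lemma \ref{lem_conservation}, which asserts $\langle \mathcal{L}_h[\phi]u,\mathbf{1}\rangle_h = 0$ for \emph{any} grid function $\phi\in\mathcal{C}$; in particular it applies to both $\mathcal{L}_h[-\phi^k]$ and $\mathcal{L}_h[\phi^k]$.

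First I would set up the induction. The base cases are $\langle p^0,1\rangle_h = M_p$ and $\langle n^0,1\rangle_h = M_n$, which hold by the definition of $M_p,M_n$, together with $\langle p^1,1\rangle_h = M_p$ and $\langle n^1,1\rangle_h = M_n$, which hold because $p^1,n^1$ are produced by one ETD1 step, to which Theorem \ref{thm_ETD1_mass} already applies. Then, assuming $\langle p^{k-1},1\rangle_h = M_p$ for some $k\ge 1$, I would work with the continuous-in-$s$ subproblem \eqref{ETD2}: taking the discrete $L^2$ inner product of $q^{k-1}_s = \mathcal{L}_h[-\phi^k]q^{k-1}$ against the constant grid function $\mathbf{1}$ and invoking Lemma \ref{lem_conservation} gives $\frac{d}{ds}\langle q^{k-1}(s),1\rangle_h = 0$. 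Hence $s\mapsto\langle q^{k-1}(s),1\rangle_h$ is constant on $[0,2\tau]$; evaluating at $s=0$ with $q^{k-1}(0)=p^{k-1}$ and the induction hypothesis fixes this constant at $M_p$, and evaluating at $s=2\tau$ yields $\langle p^{k+1},1\rangle_h = \langle q^{k-1}(2\tau),1\rangle_h = M_p$. Repeating the identical computation with $m^{k-1}_s = \mathcal{L}_h[\phi^k]m^{k-1}$ and $m^{k-1}(0)=n^{k-1}$ closes the induction for $n$, and since nothing in the argument restricts $\tau$, the conservation is unconditional.

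I do not expect a genuine obstacle. The only points that need a little care are (i) the bookkeeping of the three-level startup — one must explicitly note that the $k=0$ step is taken by ETD1 so that the induction has a valid second base case — and (ii) recording that Lemma \ref{lem_conservation} is being used with the bracket argument $-\phi^k$, not merely $\phi^k$. A purely fully discrete phrasing is also possible: since $\varphi_0(a)=e^a$, one may expand $p^{k+1}=e^{2\tau\mathcal{L}_h[-\phi^k]}p^{k-1}$ as a convergent power series and use $\langle (\mathcal{L}_h[-\phi^k])^j u,\mathbf{1}\rangle_h = 0$ for every $j\ge 1$ (a consequence of Lemma \ref{lem_conservation}), so that $\langle p^{k+1},1\rangle_h = \langle p^{k-1},1\rangle_h$; but the ODE formulation above is the cleaner route and is the one I would present.
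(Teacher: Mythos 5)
Your argument is correct and is essentially the proof given in the paper: take the discrete $L^2$ inner product of the ODE subproblem \eqref{ETD2} with $\mathbf{1}$, invoke Lemma \ref{lem_conservation} to conclude $\frac{d}{ds}\langle q^{k-1}(s),1\rangle_h=0$, and evaluate at $s=0$ and $s=2\tau$. Your explicit induction with the ETD1 startup step as a second base case (and the remark that the lemma is applied with bracket argument $-\phi^k$) only makes precise the bookkeeping the paper leaves implicit.
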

\begin{theorem}[Positivity preservation of ETD2]
The solutions of ETD2 scheme \eqref{ETD2} satisfy the positivity preservation unconditionally, that is, for any $\tau> 0$, if the initial value satisfies $p^{0}\geq 0\, \mbox{ and }\, n^{0}\geq 0$, then ETD2 solutions satisfy
\begin{align*}
p^{k}\geq 0\, \mbox{ and }\, n^{k}\geq 0.
\end{align*}
\end{theorem}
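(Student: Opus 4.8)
The plan is to reduce the statement to the first-order case already settled in Theorem~\ref{thm_ETD1_mp}, adding only an outer induction to accommodate the three-level structure of \eqref{rETD2}. Observe that in \eqref{rETD2} the update reads $p^{k+1}=\varphi_0(2\tau\mathcal L_h[-\phi^k])p^{k-1}$ and $n^{k+1}=\varphi_0(2\tau\mathcal L_h[\phi^k])n^{k-1}$; thus $p^{k+1}$ (resp. $n^{k+1}$) depends only on $p^{k-1}$ (resp. $n^{k-1}$) and on $\phi^k$, never on $p^k,n^k$ themselves. Since the scheme is started by one ETD1 step, $p^1,n^1\ge 0$ by Theorem~\ref{thm_ETD1_mp} together with the hypothesis $p^0,n^0\ge 0$; these two nonnegative seeds close the two-term recursion, so a strong induction on $k$ (equivalently, separate inductions along the even- and odd-indexed subsequences) reduces everything to the single implication: \emph{if $p^{k-1}\ge 0$ then $p^{k+1}\ge 0$}, and likewise for $n$.

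To prove that implication I would proceed exactly as in Theorem~\ref{thm_ETD1_mp}, now on the interval $[0,2\tau]$. First note that $\phi^k$ is a well-defined grid function in $\mathcal C$: by the mass conservation of Theorem~\ref{thm_ETD2_mass} the datum $p^k-n^k+\rho^f$ is of mean zero, so \eqref{dPNP_phi} together with the mean-zero normalization determines $\phi^k\in\mathcal C$ uniquely, and crucially no sign information on $p^k,n^k$ is needed here. Then Lemma~\ref{lem_M} applies to the matrix $B$ of $-\mathcal L_h[-\phi^k]$, making $B$ a singular $M$-matrix; and $q^{k-1}$ from \eqref{ETD2} solves $q^{k-1}_s+Bq^{k-1}=0$ on $[0,2\tau]$ with $q^{k-1}(0)=p^{k-1}\ge0$ and $q^{k-1}(2\tau)=p^{k+1}$. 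Taking $s^*$ where the minimum of all components of $q^{k-1}$ over $s\in[0,2\tau]$ is attained: if $s^*=0$ then that minimum equals a component of $p^{k-1}\ge 0$ and we are done, and otherwise $q^{k-1}_s(s^*)\le 0$ in that component, so $Bq^{k-1}(s^*)\ge 0$ in that component and the $M$-matrix structure of $B$ forces $q^{k-1}(s^*)\ge 0$, whence $p^{k+1}\ge 0$. Equivalently -- and this is perhaps the cleanest route, making the independence of $\tau$ transparent -- one may write $\varphi_0(2\tau\mathcal L_h[-\phi^k])=e^{-2\tau B}$ and, using the splitting $B=\alpha I-P$ with $P\ge 0$ for $\alpha$ large enough, conclude $e^{-2\tau B}=e^{-2\tau\alpha}e^{2\tau P}\ge 0$ entrywise, so that this operator maps the nonnegative vector $p^{k-1}$ to a nonnegative vector. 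The argument for $n^{k+1}$ is identical with $-\mathcal L_h[\phi^k]$ and $n^{k-1}$ in place of $-\mathcal L_h[-\phi^k]$ and $p^{k-1}$.

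The spectral heart of the proof is thus the same as for ETD1, and I do not expect any genuine obstacle there; the one point that needs care is the bookkeeping of the three-level recursion -- specifically, making explicit that the ETD1 startup furnishes a \emph{second} nonnegative seed $p^1,n^1$ so that the two-term recursion is closed, and that the operator $\mathcal L_h[\pm\phi^k]$ driving the step depends only on $\phi^k$, hence only on the existence (not the positivity) of $p^k,n^k$. If it is convenient for later use, e.g. in the energy analysis of Section~\ref{sec:egy}, one may simply carry $p^k,n^k\ge 0$ along inside the induction hypothesis; this is free of charge, being supplied by the positivity of $p^{k-1},n^{k-1}$ through the very same mechanism.
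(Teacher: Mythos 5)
Your proposal is correct, and its first branch (the minimum argument on $[0,2\tau]$ with the matrix $B$ of $-\mathcal{L}_h[-\phi^k]$ and Lemma~\ref{lem_M}) is essentially the paper's own proof; but two things you do differently are worth keeping. First, the induction bookkeeping: the paper reduces the claim to ``$p^{k}\ge 0$ implies $p^{k+1}\ge 0$'', whereas the ETD2 step \eqref{rETD2} propagates $p^{k-1}\mapsto p^{k+1}$, so the correct reduction is exactly yours --- even/odd subsequences seeded by $p^0,n^0\ge 0$ and by the ETD1 startup $p^1,n^1\ge 0$ (Theorem~\ref{thm_ETD1_mp}), together with the remark that the step only needs $\phi^k$ to exist, not any sign of $p^k,n^k$. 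Second, your ``cleanest route'', $e^{-2\tau B}=e^{-2\tau\alpha}e^{2\tau P}\ge 0$ entrywise with $B=\alpha I-P$, $P\ge 0$, is not merely equivalent to the minimum argument: it is the branch that actually completes the proof. The final step of the minimum argument (shared with the paper) --- from $\bigl(Bq^{k-1}(s^*)\bigr)_{l^*}\ge 0$ in the minimizing component $l^*$ and the $M$-matrix sign pattern, conclude $q^{k-1}(s^*)\ge 0$ --- is not a valid inference as stated: combining $q^{k-1}_j(s^*)\ge q^{k-1}_{l^*}(s^*)$ with $B_{l^*,j}\le 0$ only yields $0\le r_{l^*}\,q^{k-1}_{l^*}(s^*)$, where $r_{l^*}$ is the $l^*$-th \emph{row} sum of $B$; but $B$ is diagonally dominant by \emph{columns} (zero column sums), its row sums can be negative, and a $2\times 2$ matrix with zero column sums already shows the static implication can fail. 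The minimum argument can be repaired by passing to the Slotboom variable $g=q^{k-1}e^{\phi^k}$, whose generator has zero row sums (plus a standard perturbation to handle the degenerate case), but the Metzler/exponential argument you give is shorter, makes the unconditionality in $\tau$ transparent, and deserves to be the main proof --- for ETD1 as well as ETD2.
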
 
\begin{proof}
It suffices to prove $p^{k+1}\geq 0$ if $p^{k}\geq 0$. Under the periodic boundary condition, the fully discrete ETD2 scheme \eqref{ETD2} can be expressed in a matrix form
\begin{align}\label{ETD2_matrix}
&q^{k-1}_s + Bq^{k-1} =0.
\end{align} 
where $B$ is defined in \eqref{ETD1_matrix}.

To prove $p^{k+1}\geq 0$, we first suppose that
\begin{align*}
q^{k-1}(s^*)=\min_{0\leq s\leq 2\tau} q^{k-1}(s),
\end{align*}
which implies that $q^{k-1}_s(s^*)\leq 0$. Thus from \eqref{ETD2_matrix} we have 
\begin{align*}
Bq^{k-1}(s^*) \geq 0.
\end{align*}
Since $B$ is a $M$-matrix, we obtain $q^k(s^*)\geq 0$ and thus $p^{k+1}\geq 0$.

Repeating the same process, one can obtain  $n^{k+1}\geq 0$ if $n^{k}\geq 0$, which completes the proof.
\end{proof}
\section{Energy stability}\label{sec:egy}
In this section, we will prove that the proposed ETD schemes satisfy the energy stability with respect to the given free energy functional defined in \eqref{egy}.
We begin to state the energy stability of ETD1 scheme \eqref{ETD1}.
\begin{theorem}[Energy stability of ETD1]
If the initial ion concentrations satisfy $p_0\geq 0\, \mbox{ and }\, n_0\geq 0$, then solutions of ETD1 scheme \eqref{ETD1} satisfy the energy stability, that is, for any $k\geq 0$,
\begin{align*}
E^{k+1}\leq E^k +\frac{\epsilon^2}{2}\|\nabla_h(\phi^{k+1}-\phi^k)\|_h^2, 
\end{align*}
where $E^k$ is the discrete version of continuous free energy $E$, defined by
\begin{align*}
E^k=\langle p^k\ln p^k,1\rangle_h+\langle n^k\ln n^k,1\rangle_h+\frac{\epsilon^2}{2}\|\nabla_h\phi^k\|_h^2.
\end{align*}
\end{theorem}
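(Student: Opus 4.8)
The plan is to analyze the energy along the continuous-in-$s$ trajectory of the ETD1 subproblem \eqref{ETD1} and then integrate from $s=0$ to $s=\tau$. Along the way the electric-potential part of the energy is the only term that must be handled with care, since $\psi^k(s)$ is slaved to $q^k(s)-m^k(s)$ through $(-\epsilon^2\Delta_h)^{-1}$ but the ETD flows use the \emph{frozen} potential $\phi^k$; this mismatch is exactly what produces the error term $\frac{\epsilon^2}{2}\|\nabla_h(\phi^{k+1}-\phi^k)\|_h^2$ on the right-hand side.

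First I would introduce the running energy
\[
\mathcal{E}^k(s)=\langle q^k(s)\ln q^k(s),1\rangle_h+\langle m^k(s)\ln m^k(s),1\rangle_h+\frac{\epsilon^2}{2}\|\nabla_h\psi^k(s)\|_h^2,
\]
noting $\mathcal{E}^k(0)=E^k$ and $\mathcal{E}^k(\tau)=\langle p^{k+1}\ln p^{k+1},1\rangle_h+\langle n^{k+1}\ln n^{k+1},1\rangle_h+\frac{\epsilon^2}{2}\|\nabla_h\psi^k(\tau)\|_h^2$. By Theorem~\ref{thm_ETD1_mp} (applied on the interval so that $q^k(s),m^k(s)>0$ whenever the data are positive; a strict-positivity remark via the $M$-matrix/exponential may be needed) the logarithmic terms are differentiable in $s$. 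Differentiating and using $q^k_s=\mathcal{L}_h[-\phi^k]q^k$, $m^k_s=\mathcal{L}_h[\phi^k]m^k$, the first two terms contribute $\langle \mathcal{L}_h[-\phi^k]q^k,\,\ln q^k+1\rangle_h+\langle \mathcal{L}_h[\phi^k]m^k,\,\ln m^k+1\rangle_h$; the ``$+1$'' pieces vanish by Lemma~\ref{lem_conservation}, and rewriting $\ln q^k=\ln\frac{q^k}{e^{-\phi^k}}-\phi^k$ and $\ln m^k=\ln\frac{m^k}{e^{\phi^k}}+\phi^k$ splits each into a dissipative part, which is $\le 0$ by Lemma~\ref{lem_egy}, plus a cross term $-\langle \mathcal{L}_h[-\phi^k]q^k,\phi^k\rangle_h+\langle \mathcal{L}_h[\phi^k]m^k,\phi^k\rangle_h=-\langle q^k_s-m^k_s,\phi^k\rangle_h$.

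The key step is then to reconcile that cross term with $\frac{\epsilon^2}{2}\frac{d}{ds}\|\nabla_h\psi^k(s)\|_h^2$. Using the Poisson relation $-\epsilon^2\Delta_h\psi^k(s)=q^k(s)-m^k(s)+\rho^f$ together with summation by parts and the mean-zero normalization of $\psi^k$, one gets $\frac{\epsilon^2}{2}\frac{d}{ds}\|\nabla_h\psi^k\|_h^2=\langle -\epsilon^2\Delta_h\psi^k,\psi^k_s\rangle_h=\langle q^k_s-m^k_s,\psi^k(s)\rangle_h$. Hence
\[
\frac{d}{ds}\mathcal{E}^k(s)\le \langle q^k_s-m^k_s,\ \psi^k(s)-\phi^k\rangle_h,
\]
and since $q^k_s-m^k_s=-\epsilon^2\Delta_h\psi^k_s$, summation by parts turns the right side into $-\epsilon^2\langle\nabla_h\psi^k_s,\nabla_h(\psi^k-\phi^k)\rangle_h=-\frac{\epsilon^2}{2}\frac{d}{ds}\|\nabla_h(\psi^k-\phi^k)\|_h^2+\frac{\epsilon^2}{2}\frac{d}{ds}\|\nabla_h\psi^k_s\|_h^2\cdot 0$; more cleanly, write it as $-\frac{\epsilon^2}{2}\frac{d}{ds}\|\nabla_h(\psi^k(s)-\phi^k)\|_h^2$ after noting $\psi^k_s=(\psi^k-\phi^k)_s$. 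Integrating over $[0,\tau]$ gives
\[
E^{k+1}\le E^k-\frac{\epsilon^2}{2}\|\nabla_h(\psi^k(\tau)-\phi^k)\|_h^2+\frac{\epsilon^2}{2}\|\nabla_h(\psi^k(0)-\phi^k)\|_h^2
=E^k-\frac{\epsilon^2}{2}\|\nabla_h(\phi^{k+1}-\phi^k)\|_h^2,
\]
using $\psi^k(0)=\phi^k$ and $\psi^k(\tau)=\phi^{k+1}$, which actually yields a slightly stronger statement than claimed; dropping the nonpositive dissipation term recovers exactly $E^{k+1}\le E^k+\frac{\epsilon^2}{2}\|\nabla_h(\phi^{k+1}-\phi^k)\|_h^2$.

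I expect the main obstacle to be \emph{rigorously justifying strict positivity of $q^k(s),m^k(s)$ for all $s\in[0,\tau]$} (so that $\ln q^k(s)$ and its $s$-derivative are well defined along the whole trajectory, not just at the endpoints) and, relatedly, making the differentiation-under-the-inner-product and summation-by-parts manipulations with the $(-\epsilon^2\Delta_h)^{-1}$ operator precise in the mean-zero subspace. The bookkeeping that converts the cross term into a perfect $s$-derivative of $\|\nabla_h(\psi^k-\phi^k)\|_h^2$ is the delicate algebraic point; once that identity is in hand, the energy estimate follows by a one-line integration.
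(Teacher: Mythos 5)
Your overall strategy is sound and genuinely different from the paper's: you track a running energy $\mathcal{E}^k(s)$ in which the potential $\psi^k(s)$ is slaved to $q^k(s)-m^k(s)$ along the ETD trajectory, whereas the paper works only with the frozen potential $\phi^k$ inside the ODE argument (its monotone quantity is $W^k(s)=\langle q^k\ln q^k-q^k,1\rangle_h+\langle q^k,\phi^k\rangle_h$, and its analogue for $m^k$) and brings in the electrostatic energy only at the endpoints, via the discrete Poisson relation at levels $k$, $k+1$ and the polarization identity $\epsilon^2\langle\nabla_h(\phi^{k+1}-\phi^k),\nabla_h\phi^k\rangle_h=-\frac{\epsilon^2}{2}\bigl(\|\nabla_h\phi^k\|_h^2-\|\nabla_h\phi^{k+1}\|_h^2+\|\nabla_h(\phi^{k+1}-\phi^k)\|_h^2\bigr)$. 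Your reduction of the entropy derivative (the $+1$ pieces killed by Lemma~\ref{lem_conservation}, the splitting $\ln q^k=\ln\frac{q^k}{e^{-\phi^k}}-\phi^k$, Lemma~\ref{lem_egy} for the dissipative parts, and the identity $\frac{\epsilon^2}{2}\frac{d}{ds}\|\nabla_h\psi^k\|_h^2=\langle q^k_s-m^k_s,\psi^k\rangle_h$) is correct, and the positivity/differentiability issue you flag is no worse than in the paper's own proof, which also differentiates $\langle q^k\ln q^k,1\rangle_h$ without comment.

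However, there is a concrete sign error at the last step. From $q^k_s-m^k_s=-\epsilon^2\Delta_h\psi^k_s$, summation by parts gives $\langle q^k_s-m^k_s,\psi^k-\phi^k\rangle_h=+\epsilon^2\langle\nabla_h\psi^k_s,\nabla_h(\psi^k-\phi^k)\rangle_h=+\frac{\epsilon^2}{2}\frac{d}{ds}\|\nabla_h(\psi^k(s)-\phi^k)\|_h^2$, not the negative of it. Integrating over $[0,\tau]$ with $\psi^k(0)=\phi^k$, $\psi^k(\tau)=\phi^{k+1}$ then yields $E^{k+1}\le E^k+\frac{\epsilon^2}{2}\|\nabla_h(\phi^{k+1}-\phi^k)\|_h^2$ directly, which is exactly the stated theorem; your claimed ``slightly stronger'' bound $E^{k+1}\le E^k-\frac{\epsilon^2}{2}\|\nabla_h(\phi^{k+1}-\phi^k)\|_h^2$ does not follow from this argument and should not be expected, since it would assert unconditional dissipation of the unmodified discrete energy, which neither this computation nor the paper establishes. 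With the sign corrected, your proof is valid and lands on the same inequality as the paper, obtained via continuous-in-$s$ bookkeeping of the electrostatic energy rather than the paper's endpoint polarization identity.
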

\begin{proof}
Taking $L^2$ inner product with $\ln \frac{q^k}{e^{-\phi^k}}$ on both side of \eqref{ETD1} and using Lemma \ref{lem_egy}, we have 
\begin{align*}
\langle q^k_s,\ln \frac{q^k}{e^{-\phi^k}}\rangle_h =& \langle\mathcal{L}_{h}[-\phi^k]q^k,\ln \frac{q^k}{e^{-\phi^k}}\rangle_h\leq 0,
\end{align*}
which implies the quantity $W^k(s)=\langle q^k\ln q^k-q^k,1\rangle_h+\langle q^k,\phi^k\rangle_h$ satisfies the ODE
\begin{align}\label{ETD2_ODE}
\begin{aligned}
&\frac{dW^k(s)}{ds}\leq 0,\,s\in(0,\tau],\\
&W^k(0)= \langle p^k\ln p^k-p^k,1\rangle_h.
\end{aligned}
\end{align}
Thus we have from \eqref{ETD2_ODE} that $W^k(\tau)\leq W^k(0)$ which implies
\begin{align*}
\langle p^{k+1}\ln p^{k+1}-p^{k+1},1\rangle_h+\langle p^{k+1},\phi^k\rangle_h\leq \langle p^k\ln p^k-p^k,1\rangle_h+\langle p^k,\phi^k\rangle_h.
\end{align*}
Similarly, one can obtain
\begin{align*}
\langle n^{k+1}\ln n^{k+1}-n^{k+1},1\rangle_h-\langle n^{k+1},\phi^k\rangle_h\leq \langle n^k\ln n^k-n^k,1\rangle_h-\langle n^k,\phi^k\rangle_h.
\end{align*}
Combining with the above inequalities gives
\begin{align}\label{ETD1_egy_pn}
\begin{aligned}
&\langle p^{k+1}\ln p^{k+1},1\rangle_h+\langle n^{k+1}\ln n^{k+1},1\rangle_h+\langle p^{k+1}-n^{k+1}-p^k+n^k,\phi^k\rangle_h\\
\leq &\langle p^k\ln p^k,1\rangle_h+\langle n^k\ln n^k,1\rangle_h+\langle p^{k+1}+n^{k+1},1\rangle_h-\langle p^k+n^k,1\rangle_h\\
\leq &\langle p^k\ln p^k,1\rangle_h+\langle n^k\ln n^k,1\rangle_h,
\end{aligned}
\end{align}
where we have used Theorem \ref{thm_ETD1_mass}.

It follows from \eqref{ETD1} that 
\begin{align*}
p^{k+1}-n^{k+1}-(p^k-n^k)=&p^{k+1}-n^{k+1}+\rho^f-(p^k-n^k+\rho^f)=-\epsilon^2\Delta_h(\phi^{k+1}-\phi^k)
\end{align*}
which implies that 
\begin{align}\label{ETD1_egy_phi}
\begin{aligned}
\langle p^{k+1}-n^{k+1}-p^k+n^k,\phi^k\rangle_h=&-\epsilon^2\langle \Delta_h(\phi^{k+1}-\phi^k),\phi^k\rangle_h=\epsilon^2\langle\nabla_h(\phi^{k+1}-\phi^k),\nabla_h\phi^k\rangle_h\\
=&-\frac{\epsilon^2}{2}(\|\nabla_h\phi^k\|_h^2-\|\nabla_h\phi^{k+1}\|_h^2+\|\nabla_h(\phi^{k+1}-\phi^k)\|_h^2).
\end{aligned}
\end{align}
Finally, putting \eqref{ETD1_egy_phi} into \eqref{ETD1_egy_pn} gives 
\begin{align*}
\begin{aligned}
&\langle p^{k+1}\ln p^{k+1},1\rangle_h+\langle n^{k+1}\ln n^{k+1},1\rangle_h+\frac{\epsilon^2}{2}\|\nabla_h\phi^{k+1}\|_h^2\\
\leq &\langle p^k\ln p^k,1\rangle_h+\langle n^k\ln n^k,1\rangle_h+\frac{\epsilon^2}{2}(\|\nabla_h\phi^k\|_h^2+\|\nabla_h(\phi^{k+1}-\phi^k)\|_h^2),
\end{aligned}
\end{align*}
which completes the proof. 
\end{proof}

Next, we turn to the energy stability of ETD2 scheme \eqref{ETD2}.
\begin{theorem}[Energy stability of ETD2]\label{thm_ETD2_egy}
If the initial ion concentrations satisfy $p_0\geq 0\, \mbox{ and }\, n_0\geq 0$, then solutions of ETD2 scheme \eqref{ETD2} satisfy the energy stability, that is, for any $k\geq 1$,
\begin{align*}
&\widehat{E}^{k+1}\leq \widehat{E}^{k},
\end{align*}
where 
\begin{align*}
\widehat{E}^{k}=\frac12(\langle p^k\ln p^k,1\rangle_h+\langle n^k\ln n^k,1\rangle_h+\langle p^{k-1}\ln p^{k-1},1\rangle_h+\langle n^{k-1}\ln n^{k-1},1\rangle_h)+\frac{\epsilon^2}{2}\langle\nabla_h\phi^k,\nabla_h\phi^{k-1}\rangle_h.
\end{align*}
\end{theorem}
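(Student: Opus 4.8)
The plan is to run the entropy-dissipation argument used for ETD1 on the three-level scheme \eqref{ETD2}, and then to observe that the two-step inequality it produces telescopes exactly into the claimed decay of $\widehat{E}^k$. Throughout I write $F^k:=\langle p^k\ln p^k,1\rangle_h+\langle n^k\ln n^k,1\rangle_h$, so that $2\widehat{E}^k=F^k+F^{k-1}+\epsilon^2\langle\nabla_h\phi^k,\nabla_h\phi^{k-1}\rangle_h$.

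\textbf{Step 1 (entropy balance along the ETD flow).} I would take the discrete $L^2$ inner product of the first equation of \eqref{ETD2} against $\ln\frac{q^{k-1}}{e^{-\phi^k}}$. Lemma~\ref{lem_egy} then makes the right-hand side nonpositive — this is legitimate because $q^{k-1}(s)>0$ for $s\in(0,2\tau]$: as in the positivity proof for ETD2, the solution operator $\varphi_0(s\mathcal{L}_h[-\phi^k])$ sends a nonnegative, nonzero grid function to a strictly positive one, and $\langle p^{k-1},1\rangle_h=M_p>0$. Since $\phi^k$ does not depend on $s$, the left-hand side is exactly $\frac{d}{ds}W^{k-1}(s)$ with $W^{k-1}(s):=\langle q^{k-1}\ln q^{k-1}-q^{k-1},1\rangle_h+\langle q^{k-1},\phi^k\rangle_h$; integrating over $(0,2\tau]$ gives $W^{k-1}(2\tau)\le W^{k-1}(0)$, i.e.
\[
\langle p^{k+1}\ln p^{k+1}-p^{k+1},1\rangle_h+\langle p^{k+1},\phi^k\rangle_h\le\langle p^{k-1}\ln p^{k-1}-p^{k-1},1\rangle_h+\langle p^{k-1},\phi^k\rangle_h.
\]
Testing the second equation of \eqref{ETD2} against $\ln\frac{m^{k-1}}{e^{\phi^k}}$ yields the analogue for $n$, with $\langle\cdot,\phi^k\rangle_h$ replaced by $-\langle\cdot,\phi^k\rangle_h$.

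\textbf{Step 2 (remove the linear terms and rewrite through the Poisson equation).} I would add the two inequalities of Step 1 and invoke the mass conservation of ETD2 (Theorem~\ref{thm_ETD2_mass}), so that $\langle p^{k+1}+n^{k+1},1\rangle_h=\langle p^{k-1}+n^{k-1},1\rangle_h$ and the $\langle\cdot,1\rangle_h$ terms cancel, leaving
\[
F^{k+1}+\langle (p^{k+1}-n^{k+1})-(p^{k-1}-n^{k-1}),\phi^k\rangle_h\le F^{k-1}.
\]
From the discrete Poisson relation in \eqref{rETD2}, $(p^{k+1}-n^{k+1})-(p^{k-1}-n^{k-1})=-\epsilon^2\Delta_h(\phi^{k+1}-\phi^{k-1})$, and summation by parts (exactly as in \eqref{ETD1_egy_phi}) converts the middle term into $\epsilon^2\langle\nabla_h\phi^{k+1},\nabla_h\phi^k\rangle_h-\epsilon^2\langle\nabla_h\phi^{k-1},\nabla_h\phi^k\rangle_h$. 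Hence
\[
F^{k+1}+\epsilon^2\langle\nabla_h\phi^{k+1},\nabla_h\phi^k\rangle_h\le F^{k-1}+\epsilon^2\langle\nabla_h\phi^{k-1},\nabla_h\phi^k\rangle_h.
\]

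\textbf{Step 3 (telescope).} Adding $F^k$ to both sides and using the symmetry of $\langle\cdot,\cdot\rangle_h$, the left-hand side becomes $F^{k+1}+F^k+\epsilon^2\langle\nabla_h\phi^{k+1},\nabla_h\phi^k\rangle_h=2\widehat{E}^{k+1}$ and the right-hand side becomes $F^{k-1}+F^k+\epsilon^2\langle\nabla_h\phi^k,\nabla_h\phi^{k-1}\rangle_h=2\widehat{E}^k$, which is $\widehat{E}^{k+1}\le\widehat{E}^k$. The case $k=1$ is handled identically, with the ETD1-generated $p^1,n^1,\phi^1$ playing the role of the intermediate level. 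The genuinely delicate point is the bookkeeping in Step 3: one must recognize that the two-step quantity naturally produced in Step 2 is precisely $F^k+F^{k-1}+\epsilon^2\langle\nabla_h\phi^k,\nabla_h\phi^{k-1}\rangle_h$, so the mixed gradient term appearing in $\widehat{E}^k$ is forced, not chosen. A smaller but necessary preliminary is the strict positivity of $q^{k-1}(s)$ and $m^{k-1}(s)$ on $(0,2\tau]$, required to make sense of the logarithms in Step 1.
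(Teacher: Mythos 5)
Your proposal is correct and follows essentially the same route as the paper's proof: test the ETD2 flow with $\ln\frac{q^{k-1}}{e^{-\phi^k}}$ (resp. $\ln\frac{m^{k-1}}{e^{\phi^k}}$), invoke Lemma~\ref{lem_egy} to get $W^{k-1}(2\tau)\le W^{k-1}(0)$, cancel the linear terms via Theorem~\ref{thm_ETD2_mass}, rewrite the mixed terms through the discrete Poisson equation and summation by parts, and add $F^k$ to both sides to obtain $\widehat{E}^{k+1}\le\widehat{E}^k$. Your explicit justification of the strict positivity of $q^{k-1}(s)$, $m^{k-1}(s)$ (needed for the logarithms) is a small refinement the paper leaves implicit, but otherwise the two arguments coincide step by step.
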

\begin{proof}
Taking $L^2$ inner product with $\ln \frac{q^{k-1}}{e^{-\phi^k}}$ on both side of \eqref{ETD2} and using Lemma \ref{lem_egy}, we have 
\begin{align*}
\langle q^{k-1}_s,\ln \frac{q^{k-1}}{e^{-\phi^k}}\rangle_h =& \langle\mathcal{L}_{h}[-\phi^k]q^{k-1},\ln \frac{q^{k-1}}{e^{-\phi^k}}\rangle_h\leq 0,
\end{align*}
which implies the quantity $W^{k-1}(s)=\langle q^{k-1}\ln q^{k-1}-q^{k-1},1\rangle_h+\langle q^{k-1},\phi^k\rangle_h$ satisfies the ODE
\begin{align}\label{ETD2_egy_ODE}
\begin{aligned}
&\frac{dW^{k-1}(s)}{ds}\leq 0,\\
&W^{k-1}(0)= \langle p^{k-1}\ln p^{k-1}-p^{k-1},1\rangle_h+\langle p^{k-1},\phi^k\rangle_h.
\end{aligned}
\end{align}
The solution of the above ODE \eqref{ETD2_egy_ODE} is $W^{k-1}(2\tau)\leq W^{k-1}(0)$. Thus we have
 \begin{align*}
&\langle p^{k+1}\ln p^{k+1}-p^{k+1},1\rangle_h+\langle p^{k+1},\phi^k\rangle_h\leq \langle p^{k-1}\ln p^{k-1}-p^{k-1},1\rangle_h+\langle p^{k-1},\phi^k\rangle_h.
\end{align*}
Similarly, one can obtain
\begin{align*}
&\langle n^{k+1}\ln n^{k+1}-n^{k+1},1\rangle_h+\langle n^{k+1},\phi^k\rangle_h
\leq \langle n^{k-1}\ln n^{k-1}-n^{k-1},1\rangle_h-\langle n^{k-1},\phi^k\rangle_h.
\end{align*}
Combining with the above inequalities and using Theorem \ref{thm_ETD2_mass} gives
\begin{align}\label{ETD2_egy_pn}
\begin{aligned}
&\langle p^{k+1}\ln p^{k+1},1\rangle_h+\langle n^{k+1}\ln n^{k+1},1\rangle_h+\langle p^{k+1}-n^{k+1},\phi^k\rangle_h\\
\leq &\langle p^{k-1}\ln p^{k-1},1\rangle_h+\langle n^{k-1}\ln n^{k-1},1\rangle_h+\langle p^{k-1}-n^{k-1},\phi^k\rangle_h.
\end{aligned}
\end{align}

It follows from \eqref{ETD2} that 
\begin{align}\label{ETD2_egy_phi}
\begin{aligned}
\langle p^{k+1}-n^{k+1},\phi^k\rangle_h=-\epsilon^2\langle \Delta_h\phi^{k+1},\phi^k\rangle_h-\langle\rho^f,\phi^k\rangle_h=\epsilon^2\langle\nabla_h\phi^{k+1},\nabla_h\phi^k\rangle_h-\langle\rho^f,\phi^k\rangle_h,\\
\langle p^{k-1}-n^{k-1},\phi^k\rangle_h=-\epsilon^2\langle \Delta_h\phi^{k-1},\phi^k\rangle_h-\langle\rho^f,\phi^k\rangle_h=\epsilon^2\langle\nabla_h\phi^{k-1},\nabla_h\phi^k\rangle_h-\langle\rho^f,\phi^k\rangle_h.
\end{aligned}
\end{align}
Finally, putting \eqref{ETD2_egy_phi} into \eqref{ETD2_egy_pn} gives 
\begin{align}\label{energy_result}
\begin{aligned}
&\langle p^{k+1}\ln p^{k+1},1\rangle_h+\langle n^{k+1}\ln n^{k+1},1\rangle_h+\epsilon^2\langle\nabla_h\phi^{k+1},\nabla_h\phi^k\rangle_h\\
\leq &\langle p^{k-1}\ln p^{k-1},1\rangle_h+\langle n^{k-1}\ln n^{k-1},1\rangle_h+\epsilon^2\langle\nabla_h\phi^{k-1},\nabla_h\phi^k\rangle_h.
\end{aligned}
\end{align}
Adding $\langle p^k\ln p^k,1\rangle_h+\langle n^k\ln n^k,1\rangle_h$ in both side of the above inequality yields
\begin{align*}
&\frac12(\langle p^{k+1}\ln p^{k+1},1\rangle_h+\langle n^{k+1}\ln n^{k+1},1\rangle_h+\langle p^k\ln p^k,1\rangle_h+\langle n^k\ln n^k,1\rangle_h)+\frac{\epsilon^2}{2}\langle\nabla_h\phi^{k+1},\nabla_h\phi^k\rangle_h\\
\leq &\frac12(\langle p^k\ln p^k,1\rangle_h+\langle n^k\ln n^k,1\rangle_h+\langle p^{k-1}\ln p^{k-1},1\rangle_h+\langle n^{k-1}\ln n^{k-1},1\rangle_h)+\frac{\epsilon^2}{2}\langle\nabla_h\phi^k,\nabla_h\phi^{k-1}\rangle_h,
\end{align*}
which leads to the desired result. The proof is completed.
 \end{proof}

\section{Numerical experiments}\label{sec:num}
In this section, we will perform some examples to numerically verify the theoretical results and demonstrate the performance of the proposed schemes. It should be noted that ETD2 scheme is used for all the following examples while ETD1 scheme is only used in the convergence test due to its low numerical accuracy. The periodic boundary condition is always imposed and the computational domain is set to be $\Omega = (-0.5,0.5)^2$ in two dimensions. 

\subsection{Convergence test}
In this subsection, we test the convergence order of the proposed scheme with $\epsilon=1$ and $\rho^f=0$. Given the initial value $p_0(x,y)=\cos^2(\pi (x+y)), n_0(x,y)=\cos^2(\pi (x-y))$ and the final time $T=0.01$, we begin by testing the temporal convergence order by fixing the spatial mesh size $h_e=1/256$. Since there is no exact solution available, we treat the solution produced by the ETD2 scheme with $\tau_e=T/1024$ as the reference solution. Let $p_{\tau,h}(T)$ be the numerical solution at time $t=T$ for the given time step size $\tau$ and mesh size $h$, and the error function is denoted by $e^p_{\tau,h}=p_{\tau,h}(T)-p_{\tau_e,h_e}(T)$. The temporal errors and the corresponding convergence rates with the successive time step sizes of the ETD1 and ETD2 schemes are reported in Table \ref{tab-temporal-conv}, where the expected temporal convergence accuracies are clearly observed (1 for the ETD1 scheme and 2 for the ETD2 schemes).
\begin{table}[!ht]
	\begin{center}
		\caption{The temporal errors with the corresponding convergence rates of the ETD1 and ETD2 schemes with the fixed mesh size $h=h_e$.}\vspace{0.2cm}
			\begin{tabular}{c|c|cccccc}
				\hline
				&$T/\tau$& $\|e^p_{\tau,h}\|_\infty$ & Rate & $\|e^n_{\tau,h}\|_\infty$ & Rate & $\|e^\phi_{\tau,h}\|_\infty$ & Rate\\
				\hline
\multirow{4}{*}{ETD1 scheme}
&4  & 3.2200e-04 & ---  & 3.2200e-04 & ---  & 6.0699e-06 & ---\\
&8  & 1.5451e-04 & 1.06 & 1.5451e-04 & 1.06 & 2.9209e-06 & 1.06\\
&16 & 7.5254e-05 & 1.04 & 7.5254e-05 & 1.04 & 1.4245e-06 & 1.04\\
&32 & 3.6694e-05 & 1.04 & 3.6694e-05 & 1.04 & 6.9509e-07 & 1.04\\
&64 & 1.7675e-05 & 1.05 & 1.7675e-05 & 1.05 & 3.3492e-07 & 1.05\\
&128& 8.2296e-06 & 1.10 & 8.2296e-05 & 1.10 & 1.5597e-07 & 1.10\\
&256& 3.5230e-06 & 1.22 & 3.5230e-06 & 1.22 & 6.6774e-08 & 1.22\\
&512& 1.1737e-06 & 1.59 & 1.1737e-06 & 1.59 & 2.2246e-08 & 1.59\\
			        \hline
\multirow{4}{*}{ETD2 scheme}
&4  & 2.5425e-05 & ---  & 2.5425e-05 & ---  & 3.5476e-07 & ---\\
&8  & 6.3509e-06 & 2.00 & 6.3509e-06 & 1.99 & 8.8568e-08 & 1.98\\
&16 & 1.5871e-06 & 2.00 & 1.5871e-06 & 2.00 & 2.2131e-08 & 2.00\\
&32 & 3.9647e-07 & 2.00 & 3.9647e-07 & 2.00 & 5.5281e-09 & 2.00\\
&64 & 9.8825e-08 & 2.00 & 9.8825e-07 & 2.00 & 1.3780e-09 & 2.00\\
&128& 2.4416e-08 & 2.02 & 2.4416e-08 & 2.02 & 3.4043e-10 & 2.02\\
&256& 5.8132e-09 & 2.07 & 5.8132e-09 & 2.07 & 8.1056e-11 & 2.07\\
&512& 1.1626e-09 & 2.32 & 1.1626e-09 & 2.32 & 1.6211e-11 & 2.32\\
\hline
		\end{tabular}
		\label{tab-temporal-conv}
	\end{center}
\end{table}

Next, we test the convergence order in space by fixing the time step size $\tau_e=T$. The solution produced by the ETD1 scheme with $h_e=1/1024$ is treated as the reference solution. The spatial errors and the corresponding convergence rates with the successive space step sizes are presented in Table \ref{tab-space-test}. As we can see, the spatial convergence rates are of second-order, which is in line with the theoretical predictions.
\begin{table}[!ht]
	\begin{center}
		\caption{
The spatial errors with the corresponding convergence rates of the ETD1 scheme with the fixed time step size $\tau=\tau_e$.}\vspace{0.2cm}
		{
			\begin{tabular}{c|cccccc}
				\hline
				$1/h$ & $\|e^p_{\tau,h}\|_\infty$ & Rate & $\|e^n_{\tau,h}\|_\infty$ & Rate & $\|e^\phi_{\tau,h}\|_\infty$ & Rate\\
				\hline
8  & 1.8348e-02 & ---  & 1.8348e-02 & ---  & 1.1920e-03 & ---\\
16 & 6.2483e-03 & 1.55 & 6.2483e-03 & 1.55 & 3.6285e-04 & 1.72\\
32 & 1.6885e-03 & 1.89 & 1.6885e-03 & 1.89 & 9.6120e-05 & 1.92\\
64 & 4.3040e-04 & 1.97 & 4.3040e-04 & 1.97 & 2.4391e-05 & 1.98\\
128& 1.0812e-04 & 1.99 & 1.0812e-04 & 1.99 & 6.1208e-06 & 1.99\\
256& 2.7063e-05 & 2.00 & 2.7063e-05 & 2.00 & 1.5316e-06 & 2.00\\
512& 6.7679e-06 & 2.00 & 6.7679e-06 & 2.00 & 3.8300e-07 & 2.00\\
				\hline
		\end{tabular}}
		\label{tab-space-test}
	\end{center}
\end{table}

\subsection{Properties test}
In this subsection, we will simulate the PNP equations to numerical verify the structure preservation of the proposed schemes. It is noted that the ETD2 scheme is used for the following examples. The fixed external charge distribution is given by
\begin{align}
\rho^f(x,y)=200\sum\limits_{\varepsilon_x,\varepsilon_y=\pm 1}\varepsilon_x\varepsilon_y e^{-100[(x+\varepsilon_x x_0)^2+(x+\varepsilon_y y_0)^2]}
\end{align}
with the center $(x_0,y_0)=(0.25,0.25)$. The initial data for concentrations is uniformly set as $p_0=0.1$ and $n_0=0.1$ with the mesh size $h = 1/256$. The dielectric coefficient is set as $\epsilon=1$. Figure \ref{cont_0_phy} presents the evolutions of the minimum value, mass increment and the energy of the numerical solutions produced by ETD2 schemes with $\tau=0.01$ and $\tau=0.001$. The positivity-preservation, mass conservation and energy stability are indeed well preserved. The numerical solution by ETD2 scheme with $\tau=0.001$ at $t=$ 0.003, 0.005, 0.01 and 0.03 is plotted in Figure \ref{cont_0_phase}. We clearly observe that as time evolves, the mobile ions are attracted by opposite fixed charges. 
\begin{figure}[!ht]
	\centerline{
		{\includegraphics[width=0.34\textwidth]{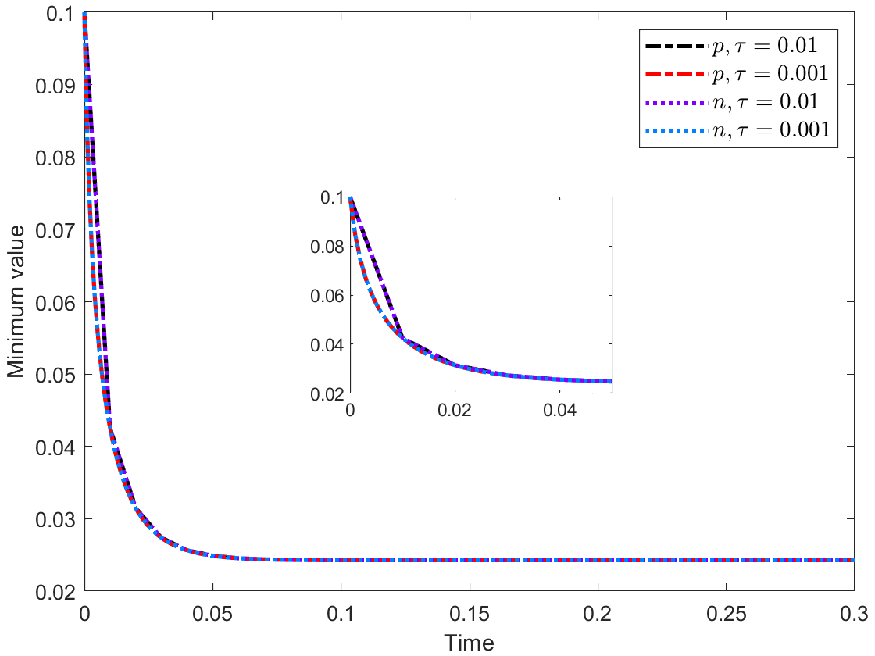}}\hspace{-0.1cm}
		{\includegraphics[width=0.34\textwidth]{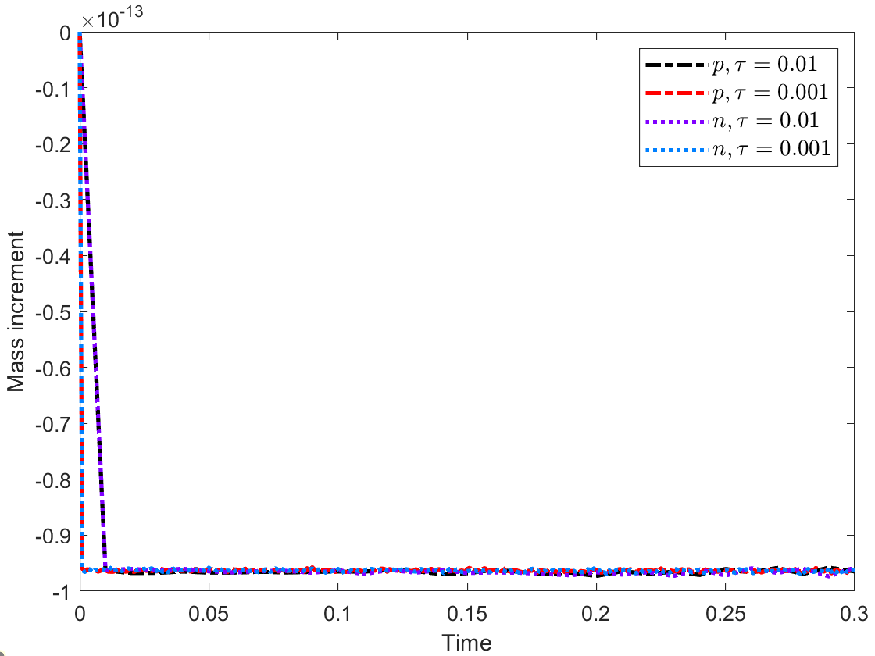}}\hspace{-0.1cm}
		{\includegraphics[width=0.34\textwidth]{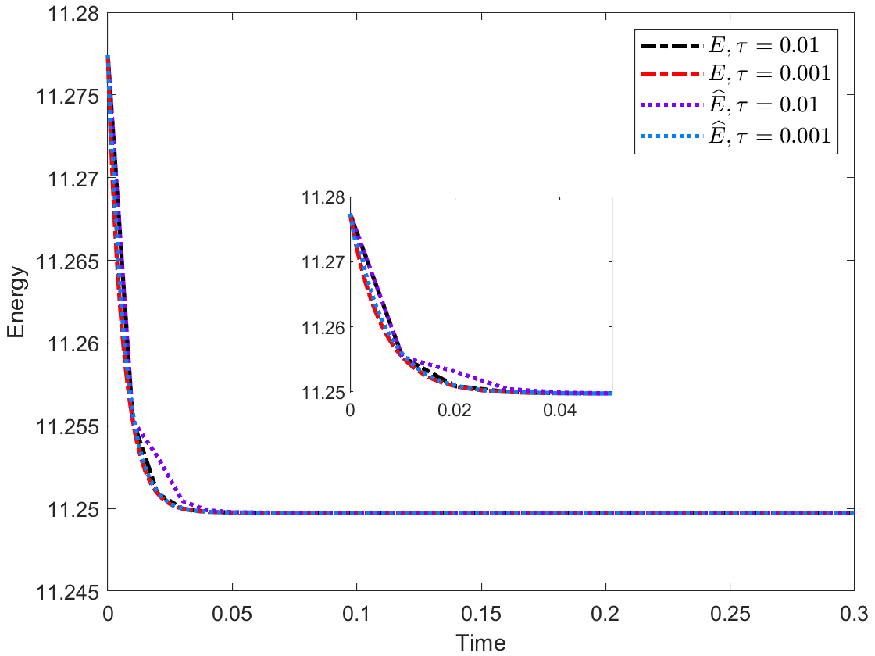}}}
	\caption{Evolutions of the minimum value, mass increment and the energy of numerical solution produced by the ETD2 scheme for PNP equations with $\tau=0.01$ and $\tau=0.001$.}
	\label{cont_0_phy}
\end{figure}
\begin{figure}[!ht]
	\centerline{
		{\includegraphics[width=0.26\textwidth]{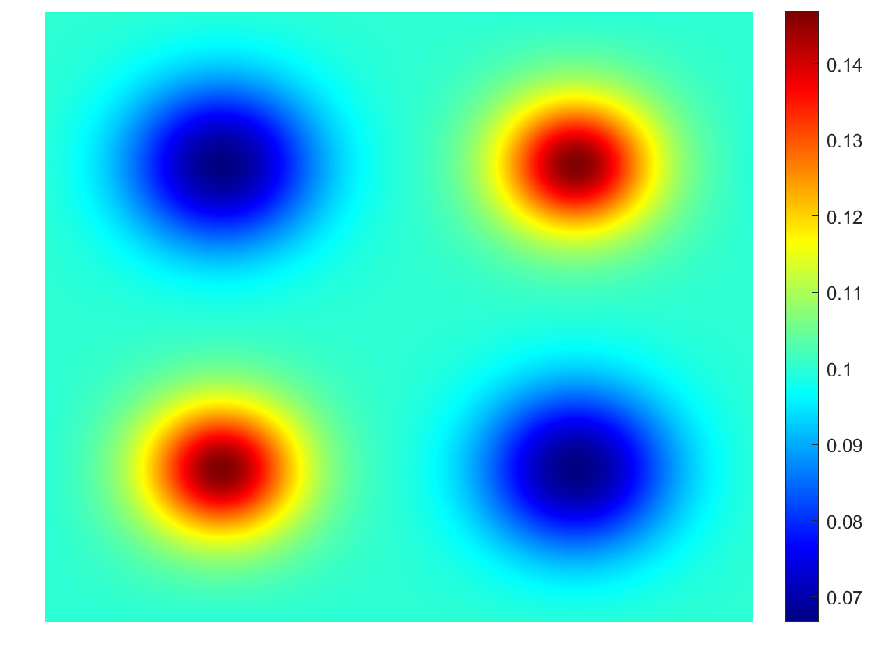}}\hspace{-0.1cm}
		{\includegraphics[width=0.26\textwidth]{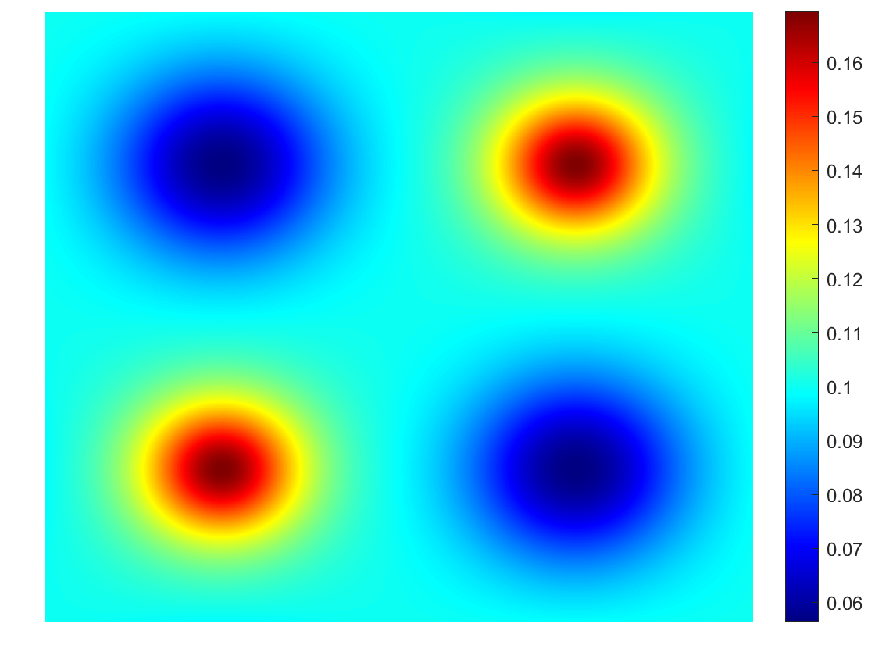}}\hspace{-0.1cm}		    {\includegraphics[width=0.26\textwidth]{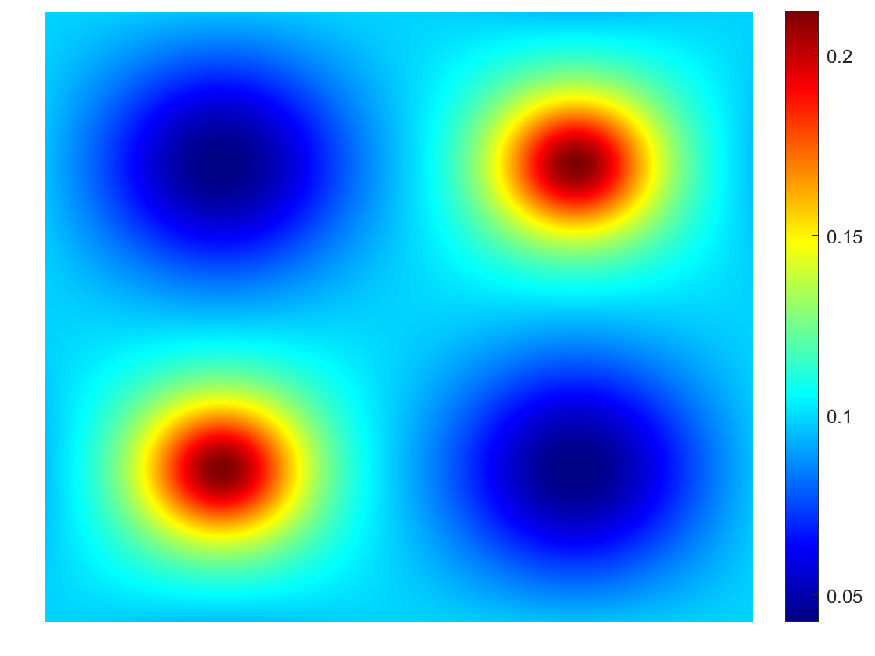}}\hspace{-0.1cm}
		{\includegraphics[width=0.26\textwidth]{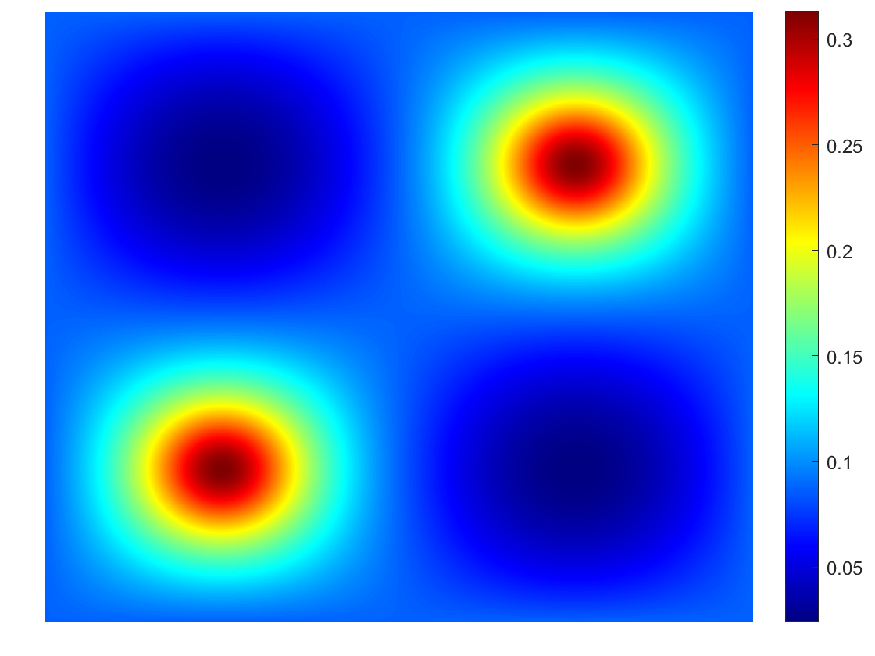}}}
		\centerline{
		{\includegraphics[width=0.26\textwidth]{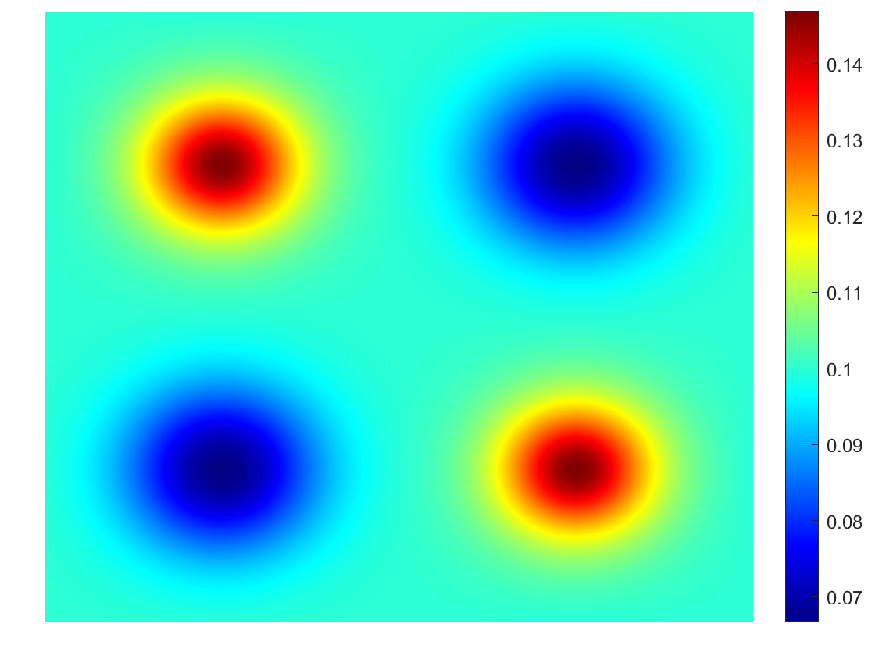}}\hspace{-0.1cm}
		{\includegraphics[width=0.26\textwidth]{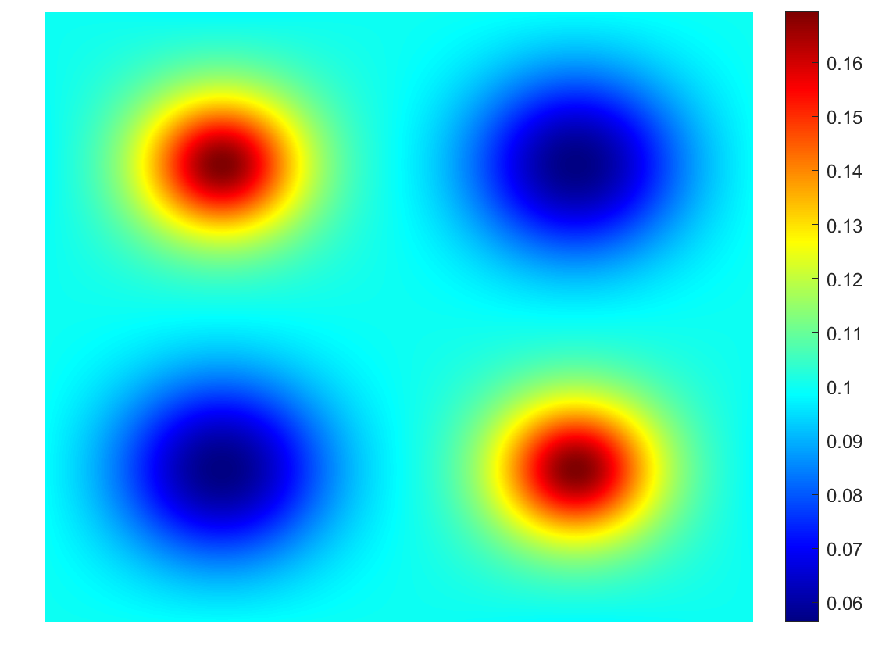}}\hspace{-0.1cm}		    {\includegraphics[width=0.26\textwidth]{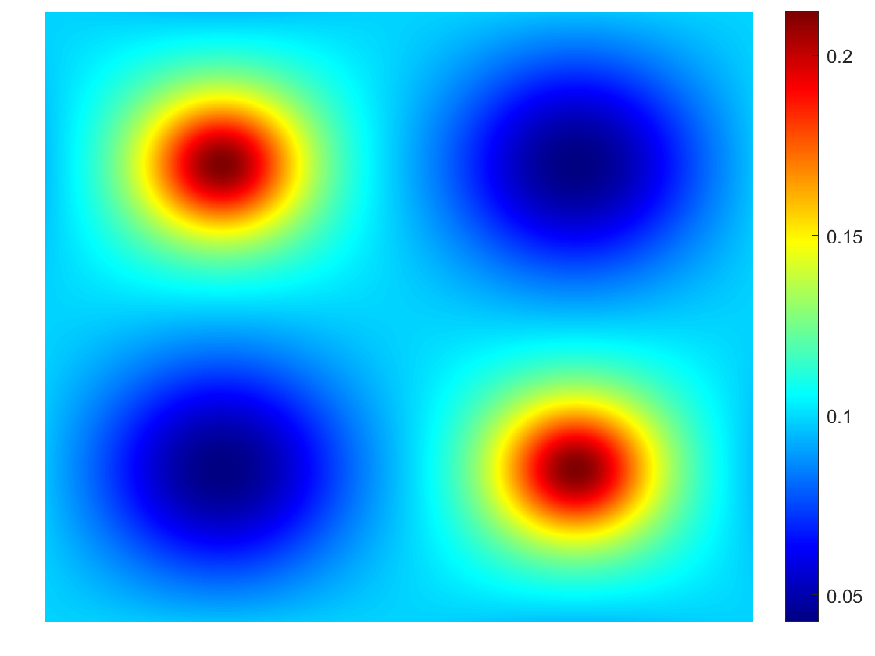}}\hspace{-0.1cm}
		{\includegraphics[width=0.26\textwidth]{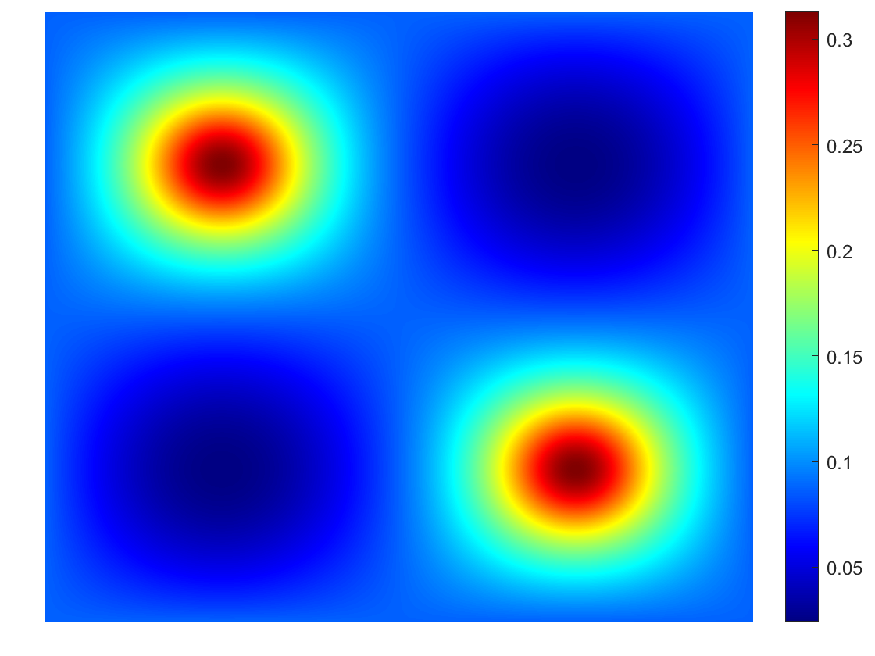}}}
	\caption{Profiles of concentrations produced by the ETD2 scheme at $t$ = 0.003, 0.005, 0.01 and 0.3 (left to right) for the PNP equations with $\tau=0.001$.Top: positive icon $p$, bottom, negative icon $n$.}
	\label{cont_0_phase}
\end{figure}
\subsection{Discontinuity in the steady state solution}
In this subsection, we will simulate the PNP equations with the discontinuous initial value. The fixed external charge distribution is given by
\begin{align*}
\rho^f(x,y)=4\chi_{[0.15,0.25]\times[0.15,0.25]}.
\end{align*}
with the dielectric coefficient $\epsilon=1$. The initial data for concentrations is uniformly set as $p_0(x,y)=\chi_{[0,0.2]\times[0,0.2]}$ and $n_0(x,y)=2\chi_{[0,0.2]\times[0,0.2]}$. The time step size is chosen as $\tau=$ 0.01 and the mesh size $h = 1/256$. Figure \ref{disc_1_phase} illustrates the configurations of the numerical solutions at $t$ = 0.02, 0.04, 0.06 and 0.1. The corresponding developments of the supremum norm and the energy are plotted in Figure \ref{disc_1_phy}. We can observe that the positivity-preservation, mass conservation and energy stability are numerically preserved, which is in agreement with the theoretical results.
\begin{figure}[!ht]
	\centerline{
		{\includegraphics[width=0.26\textwidth]{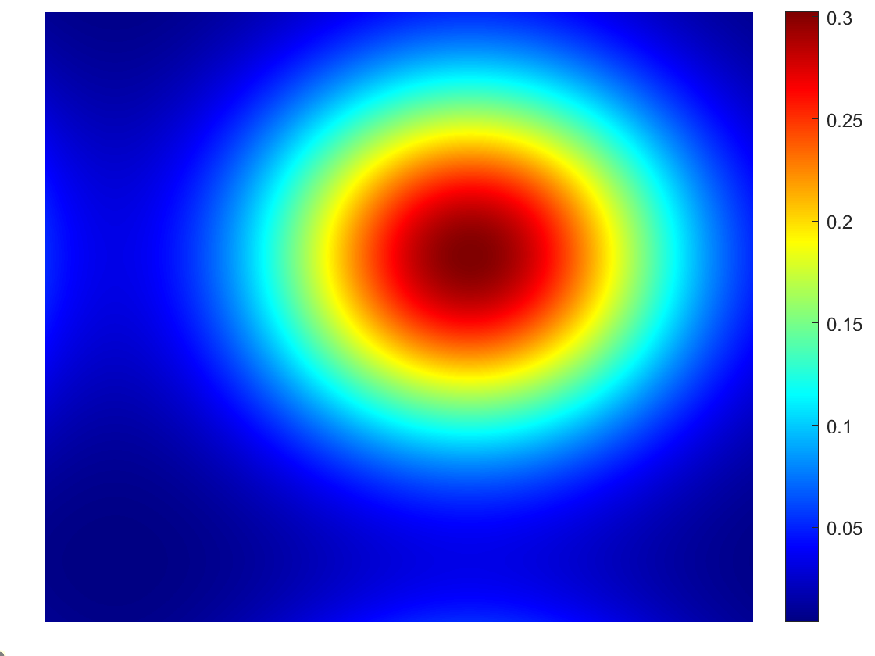}}\hspace{-0.1cm}
		{\includegraphics[width=0.26\textwidth]{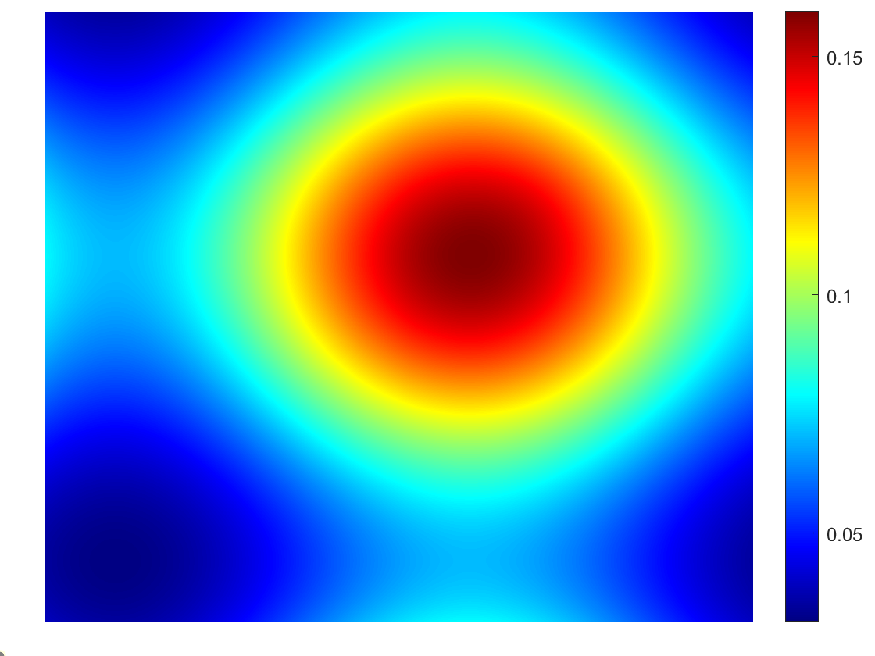}}\hspace{-0.1cm}		    {\includegraphics[width=0.26\textwidth]{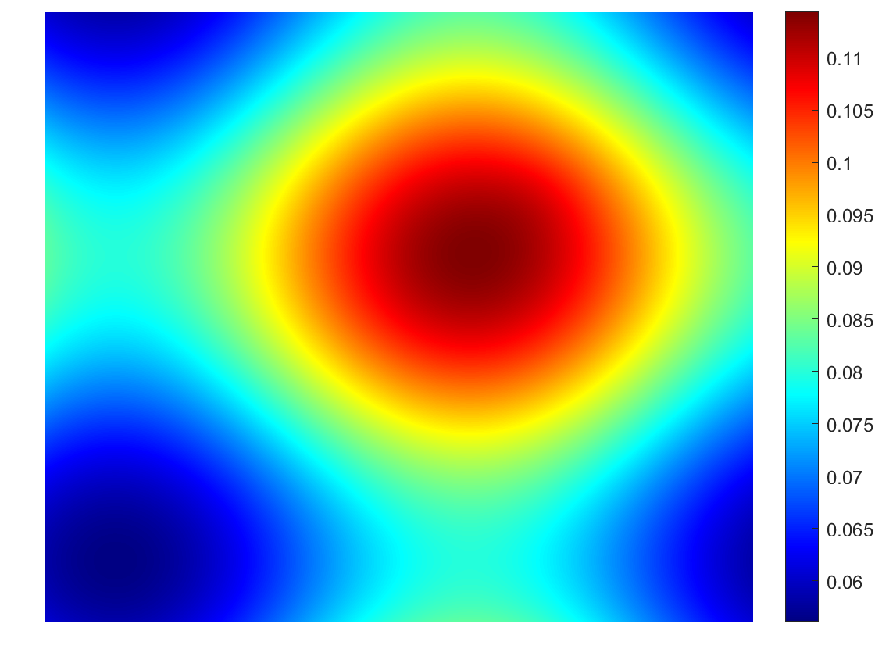}}\hspace{-0.1cm}
		{\includegraphics[width=0.26\textwidth]{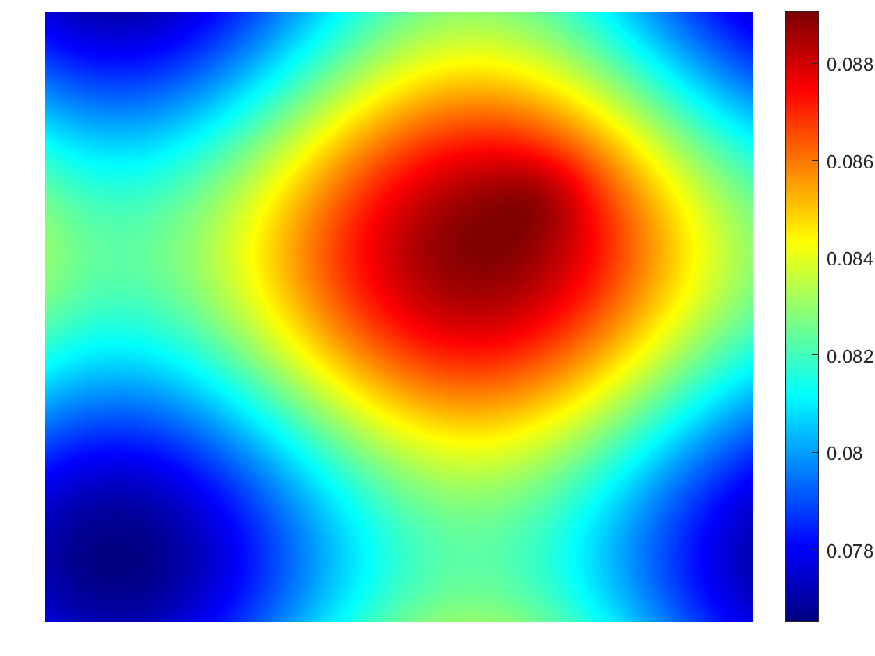}}}
		\centerline{
		{\includegraphics[width=0.26\textwidth]{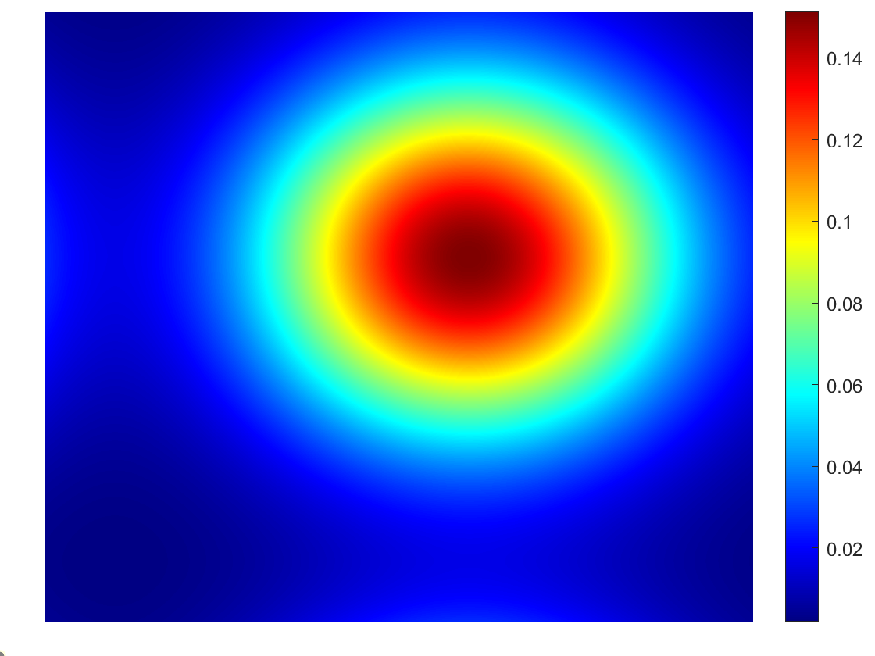}}\hspace{-0.1cm}
		{\includegraphics[width=0.26\textwidth]{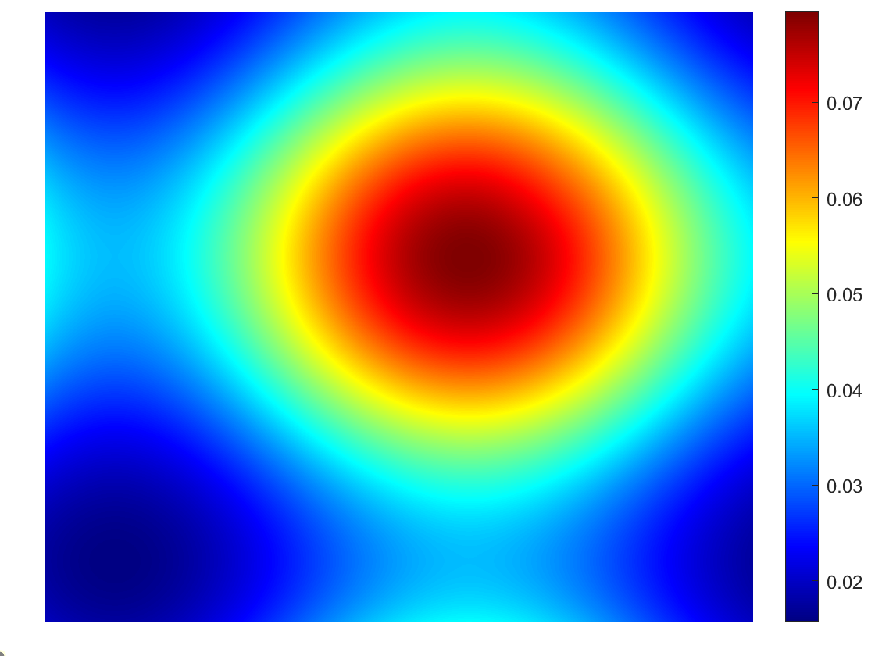}}\hspace{-0.1cm}		    {\includegraphics[width=0.26\textwidth]{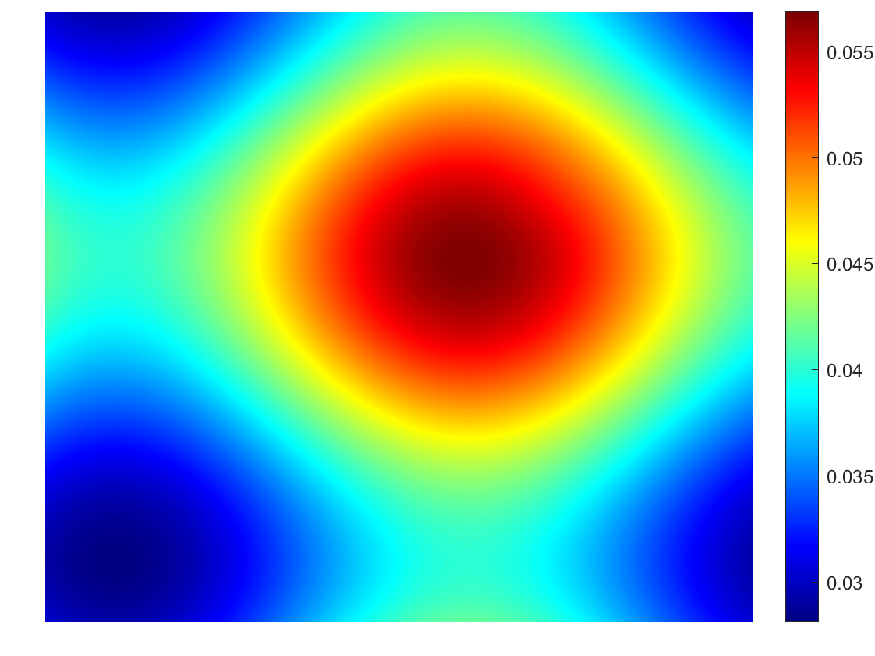}}\hspace{-0.1cm}
		{\includegraphics[width=0.26\textwidth]{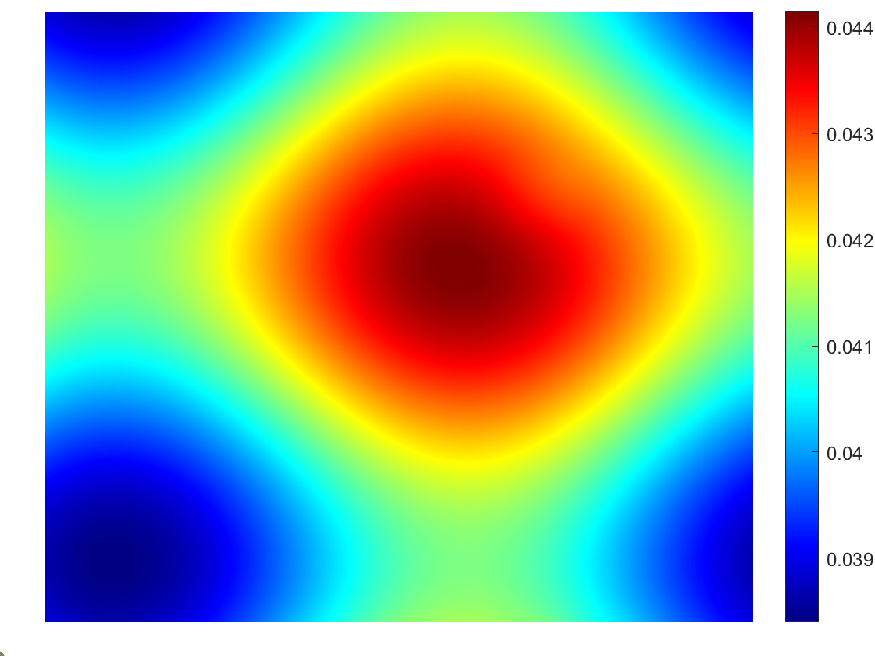}}}
	\caption{Profiles of concentrations produced by the ETD2 scheme with $\tau=0.01$ at $t$ = 0.02, 0.04, 0.06 and 0.1 (left to right) for the PNP equations with $\epsilon=1$. Top: positive icon $p$, bottom, negative icon $n$.}
	\label{disc_1_phase}
\end{figure}
\begin{figure}[!ht]
	\centerline{
		{\includegraphics[width=0.34\textwidth]{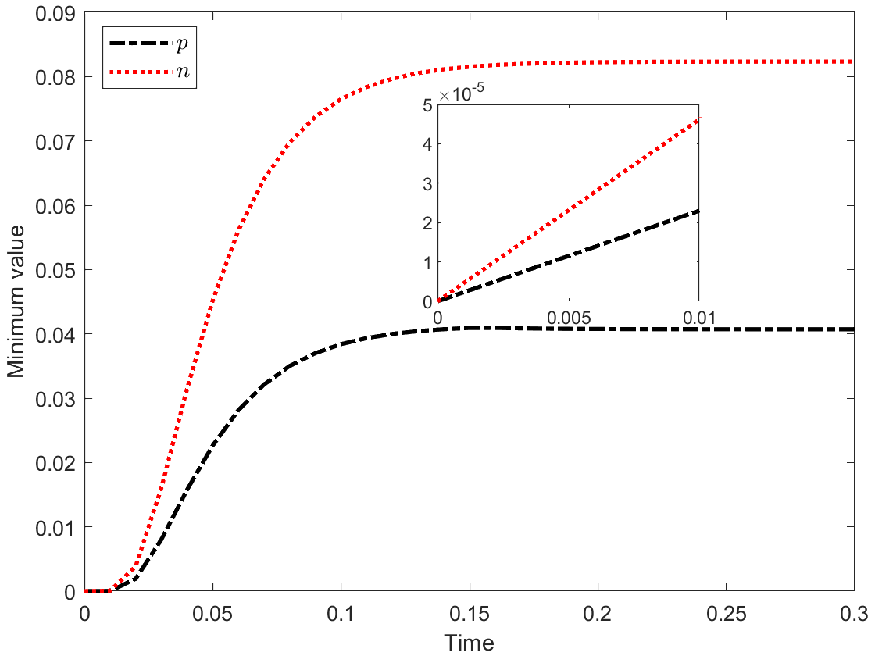}}\hspace{-0.1cm}
		{\includegraphics[width=0.34\textwidth]{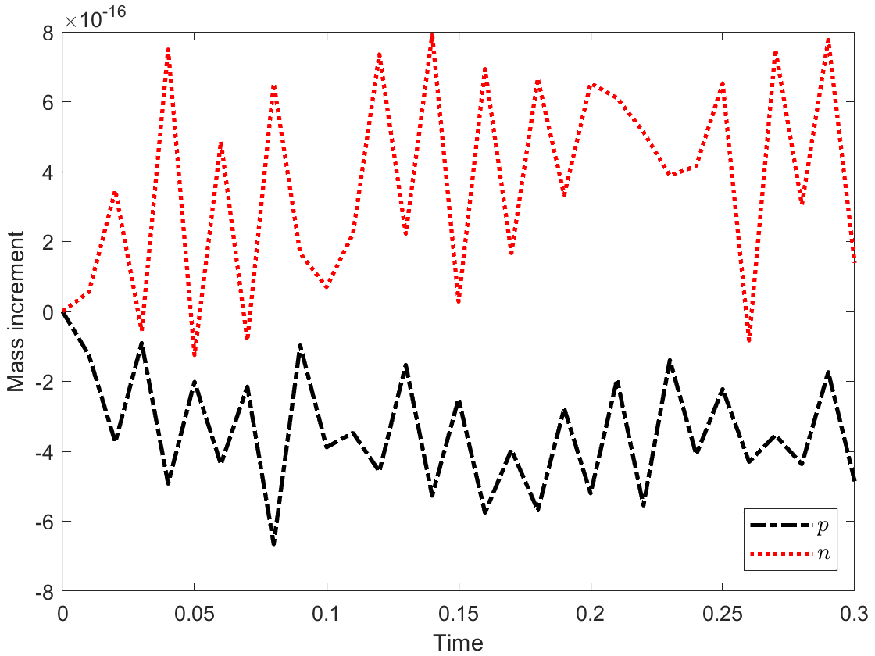}}\hspace{-0.1cm}
		{\includegraphics[width=0.34\textwidth]{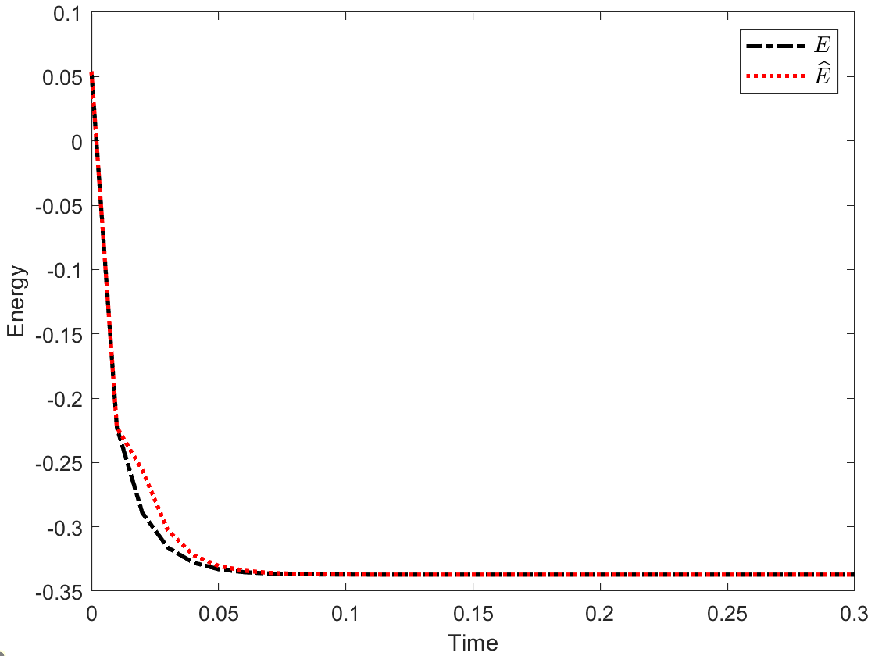}}}
	\caption{Evolutions of the minimum value, mass increment and the energy of numerical solution produced by the ETD2 scheme with $\tau=0.01$ for PNP equations with $\epsilon=1$.}
	\label{disc_1_phy}
\end{figure}

We then consider the dielectric coefficient as $\epsilon=0.1$. With the same parameters, Figure \ref{disc_01_phase} illustrates the configurations of the numerical solutions at $t$ = 0.02, 0.04, 0.06 and 0.1. The corresponding developments of the supremum norm and the energy are plotted in Figure \ref{disc_01_phy}. From these figures, we observe that positive and negative ions gradually accumulate due to lower diffusion of the electrostatic potential than that in the case $\epsilon=1$. Meanwhile, the positivity-preservation, mass conservation and energy stability are well preserved.
\begin{figure}[!ht]
	\centerline{
		{\includegraphics[width=0.26\textwidth]{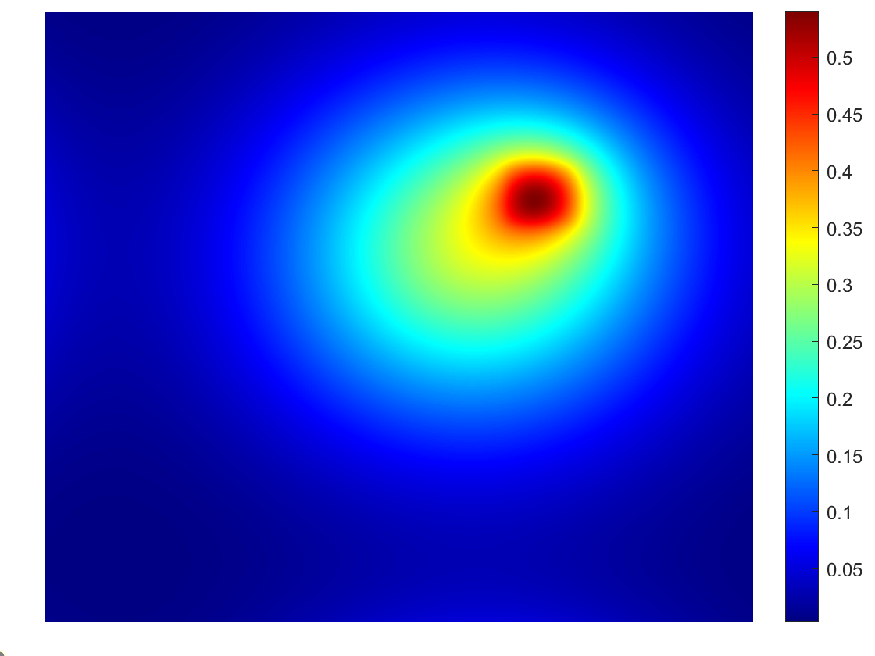}}\hspace{-0.1cm}
		{\includegraphics[width=0.26\textwidth]{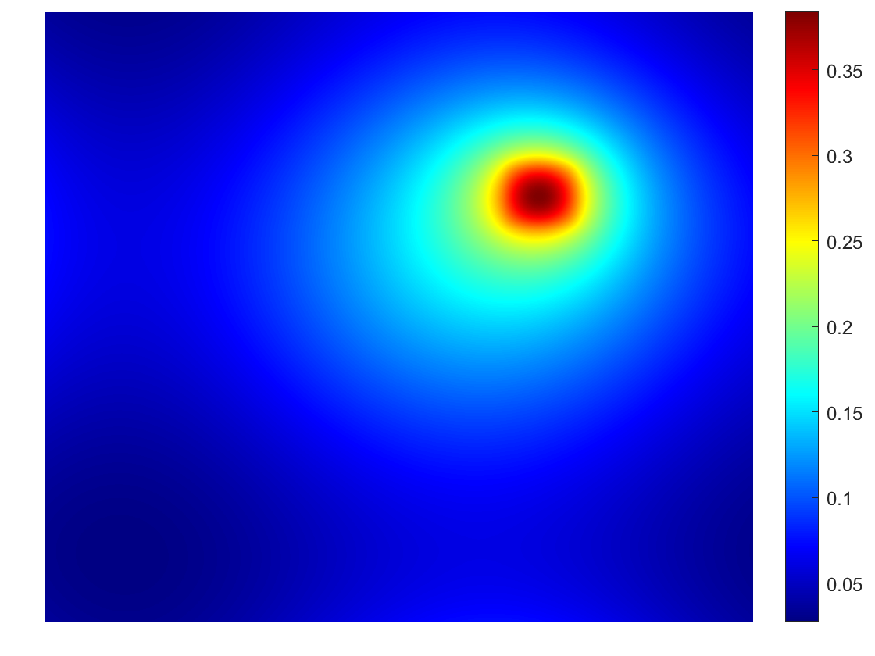}}\hspace{-0.1cm}		    {\includegraphics[width=0.26\textwidth]{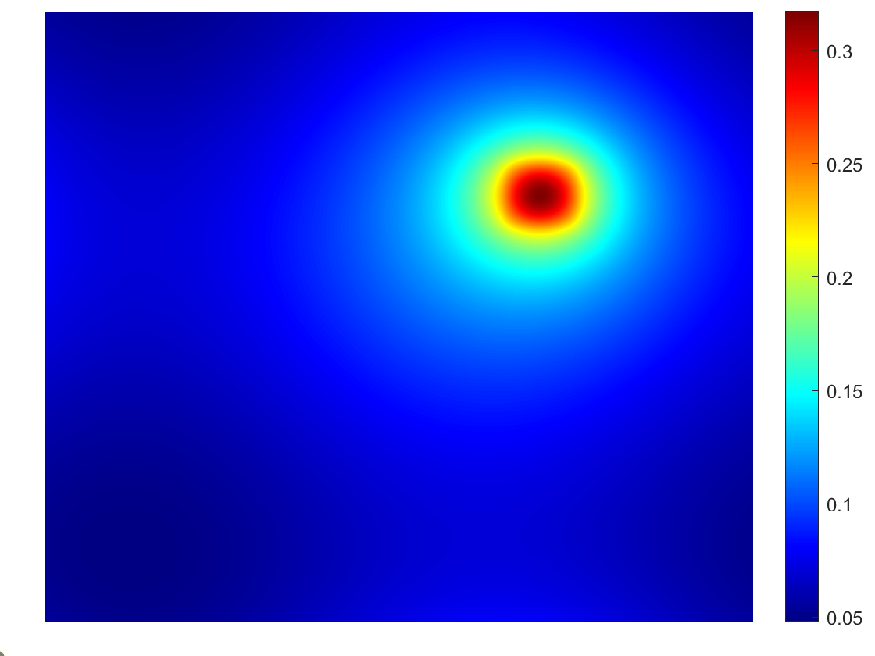}}\hspace{-0.1cm}
		{\includegraphics[width=0.26\textwidth]{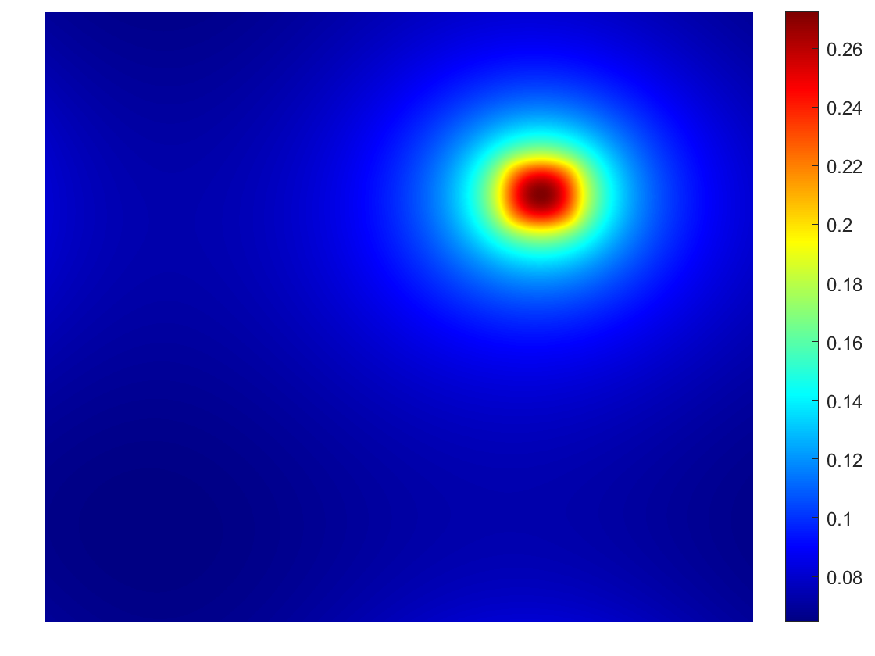}}}
		\centerline{
		{\includegraphics[width=0.26\textwidth]{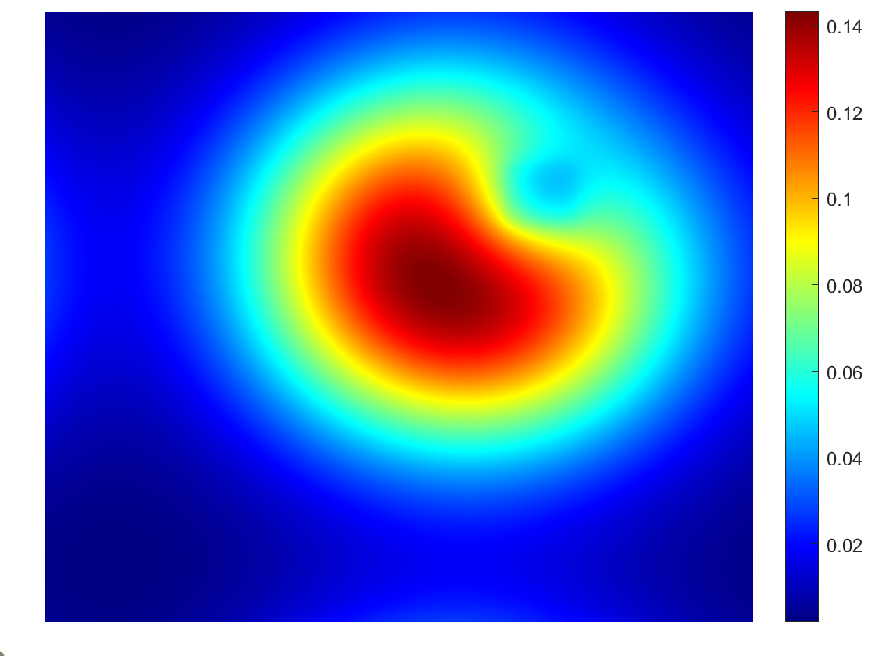}}\hspace{-0.1cm}
		{\includegraphics[width=0.26\textwidth]{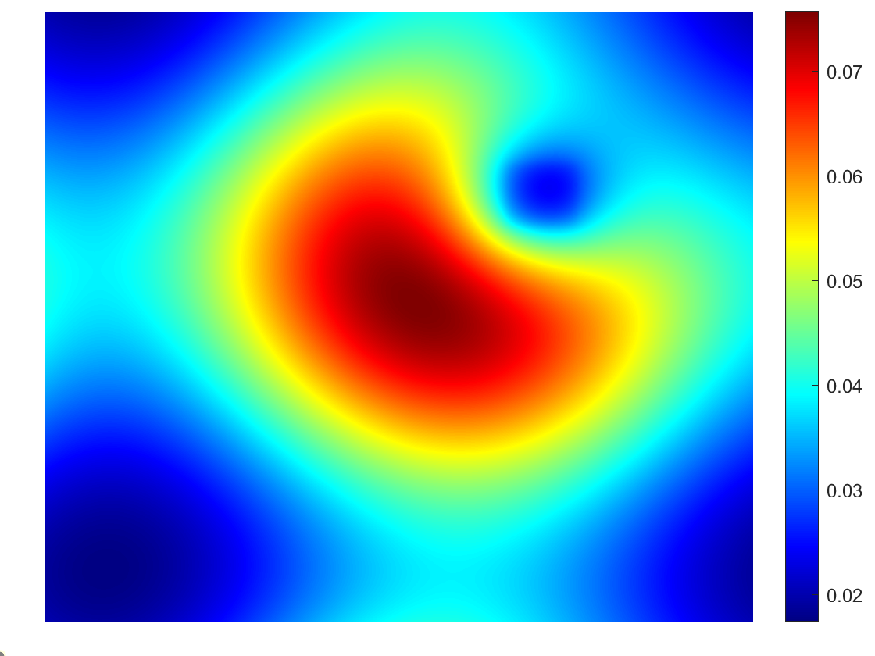}}\hspace{-0.1cm}		    {\includegraphics[width=0.26\textwidth]{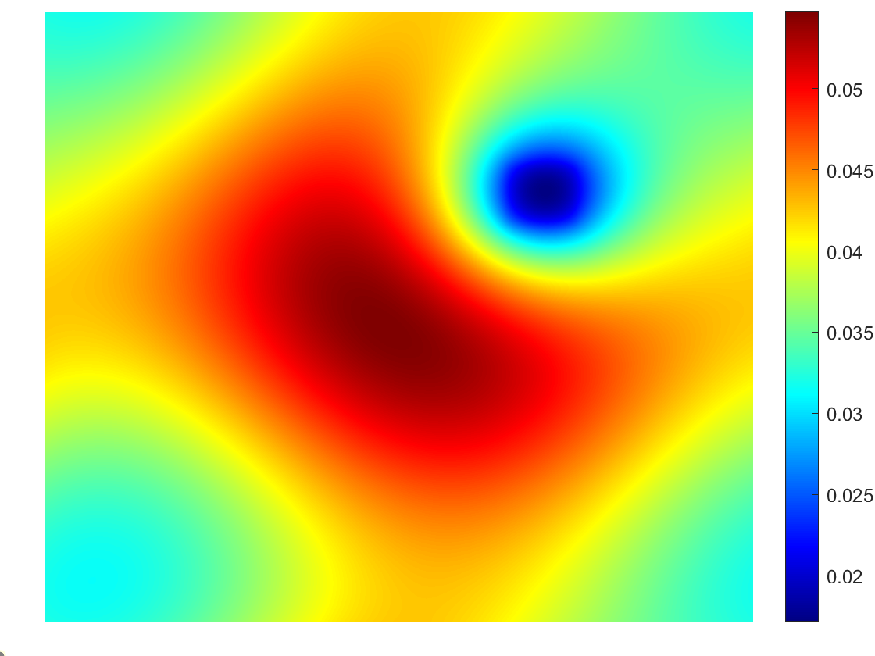}}\hspace{-0.1cm}
		{\includegraphics[width=0.26\textwidth]{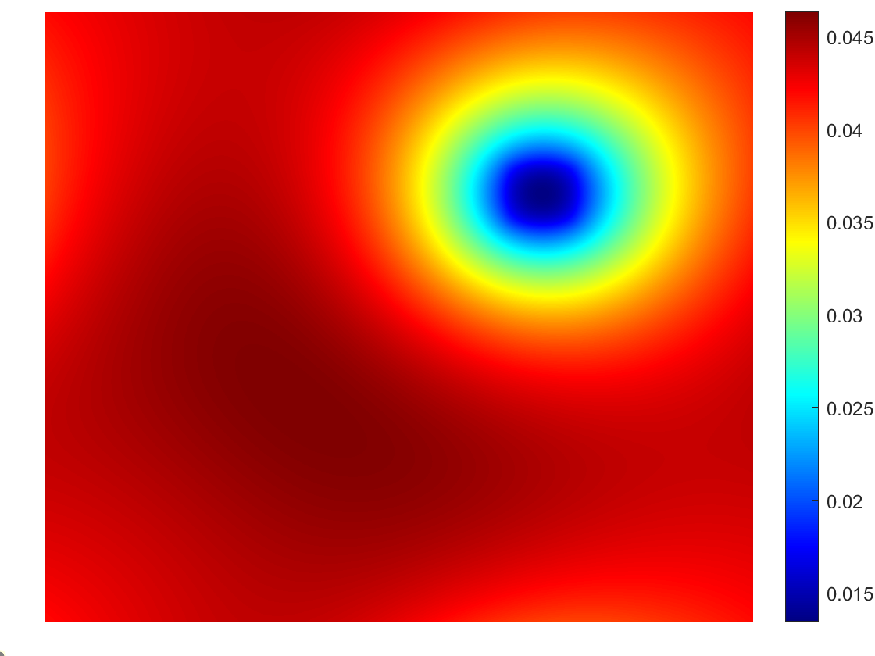}}}
	\caption{Profiles of concentrations produced by the ETD2 scheme with $\tau=0.01$ at $t$ = 0.02, 0.04, 0.06 and 0.1 (left to right) for the PNP equations with $\epsilon=0.1$. Top: positive icon $p$, bottom, negative icon $n$.}
	\label{disc_01_phase}
\end{figure}
\begin{figure}[!ht]
	\centerline{
		{\includegraphics[width=0.34\textwidth]{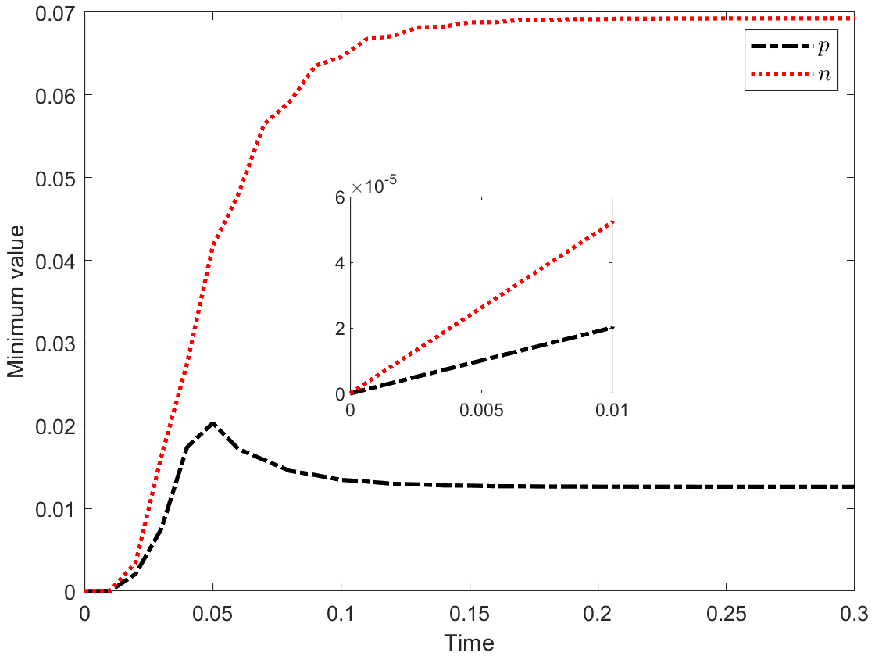}}\hspace{-0.1cm}
		{\includegraphics[width=0.34\textwidth]{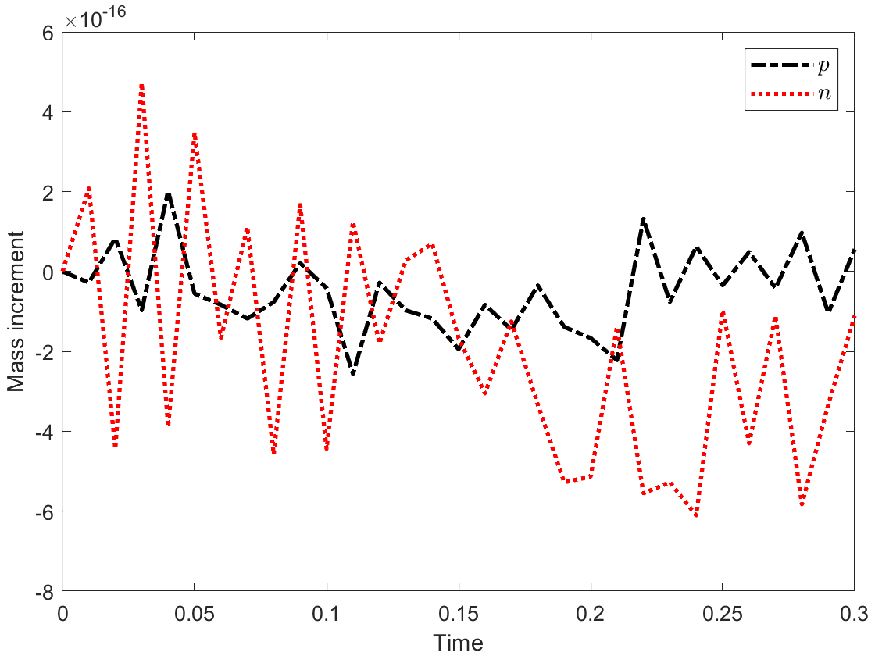}}\hspace{-0.1cm}
		{\includegraphics[width=0.34\textwidth]{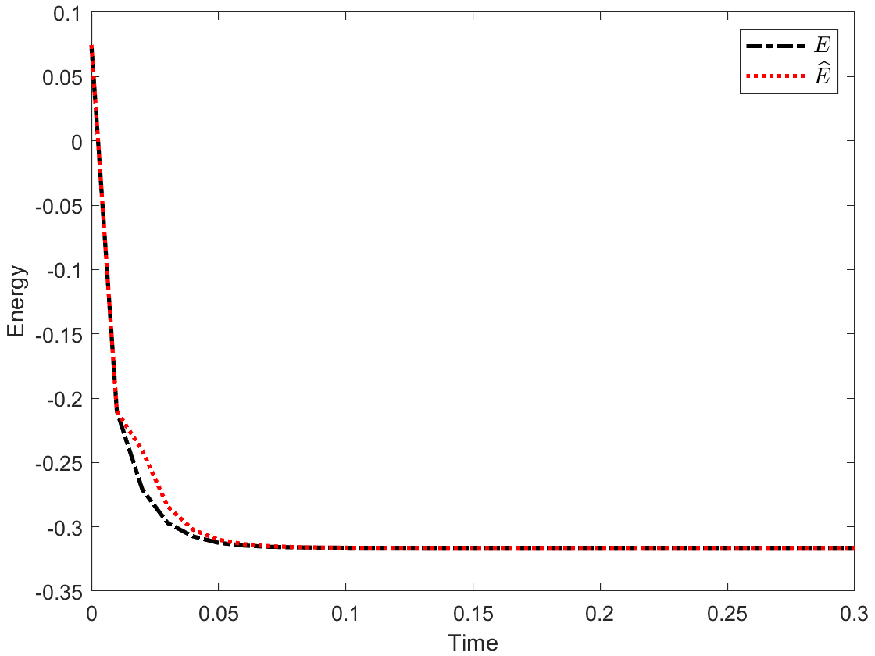}}}
	\caption{Evolutions of the minimum value, mass increment and the energy of numerical solution produced by the ETD2 scheme with $\tau=0.01$ for PNP equations with $\epsilon=0.1$.}
	\label{disc_01_phy}
\end{figure}

\subsection{Simulations of sodium-chloride saline solution}
We finally perform numerical experiments to simulate a sodium-chloride saline solution. The initial configuration for two species is generated by random distribution of a mean concentration of 0.5 on the entire domain. The fixed charge is given by 
\begin{align*}
\rho^f(x,y)=\left\{
\begin{array}{ll}
\rho_0, &x=0.25,\\
-\rho_0,&x=-0.25,\\
0,&\text{otherwise},
\end{array}
\right.
\end{align*}
where $\rho_0>0$ is the charge density. The time-space size is set as $\tau=0.01$ and $h=1/256$. 

We first take the charge density $\rho_0=1$. Figure \ref{saline_1_phase} illustrates the snapshots of numerical solutions produced by ETD2 scheme at $t=$ 0.001, 0.005, 0.02, and 0.05, respectively. The corresponding evolutions of the minimum value, mass conservation and the energy is plotted in Figure \ref{saline_1_phy}. From these figures, we observe that starting from the heterogeneous initial condition, the ionic distribution smooths out rapidly at $t=0.01$ first due to diffusion. The positivity preservation, mass conservation and energy stability are also well preserved.
\begin{figure}[!ht]
	\centerline{
		{\includegraphics[width=0.26\textwidth]{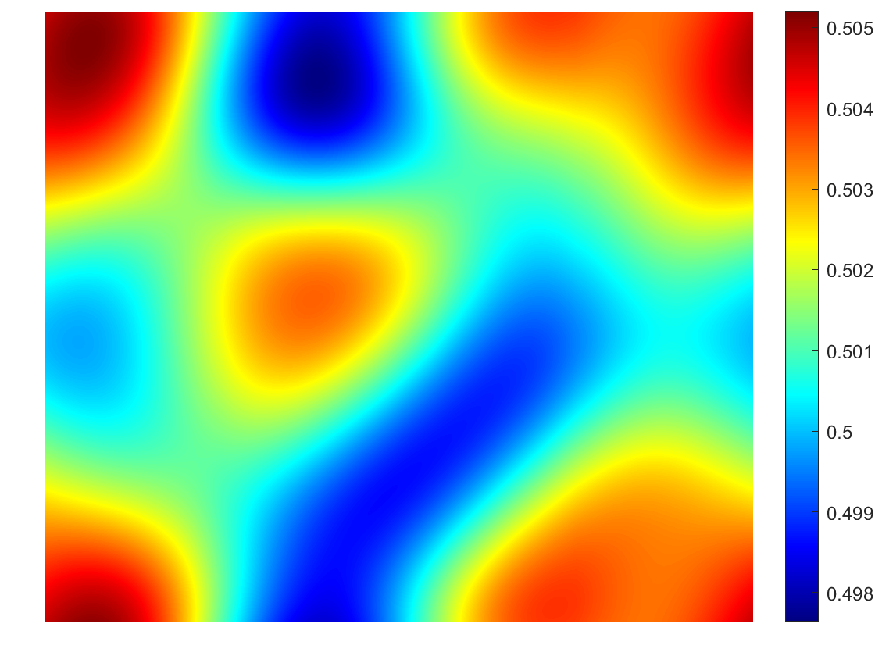}}\hspace{-0.1cm}
		{\includegraphics[width=0.26\textwidth]{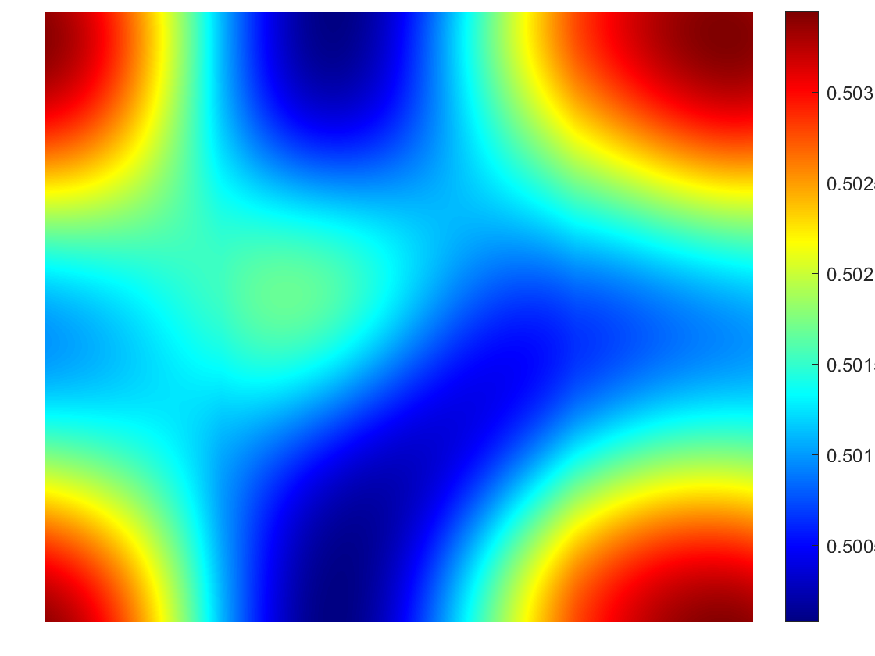}}\hspace{-0.1cm}		    {\includegraphics[width=0.26\textwidth]{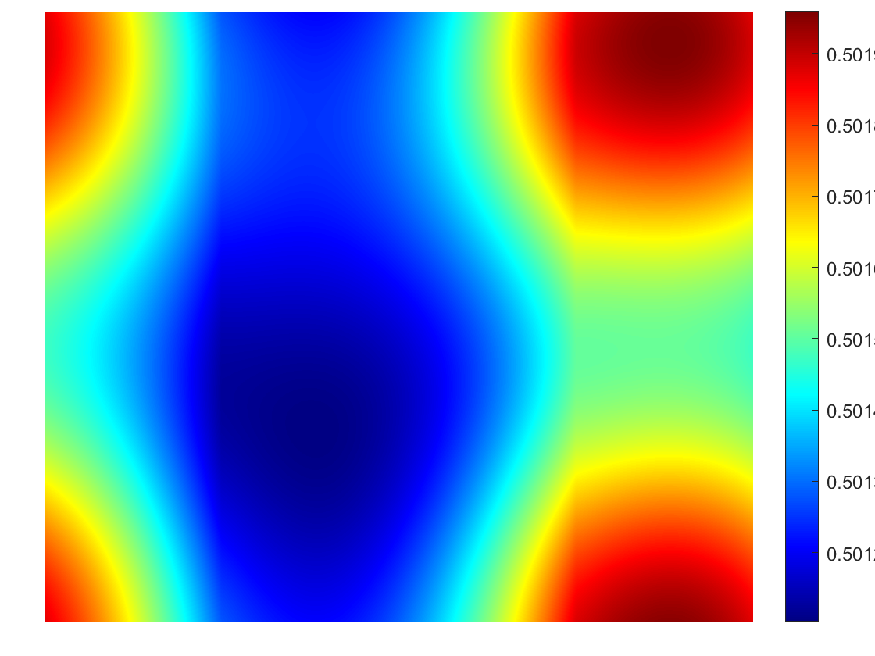}}\hspace{-0.1cm}
		{\includegraphics[width=0.26\textwidth]{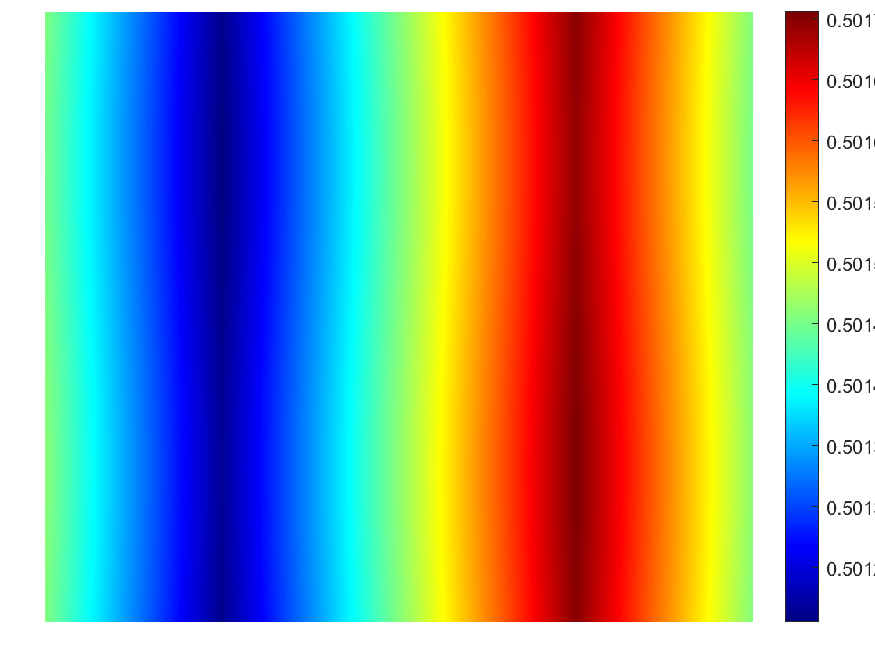}}}
		\centerline{
		{\includegraphics[width=0.26\textwidth]{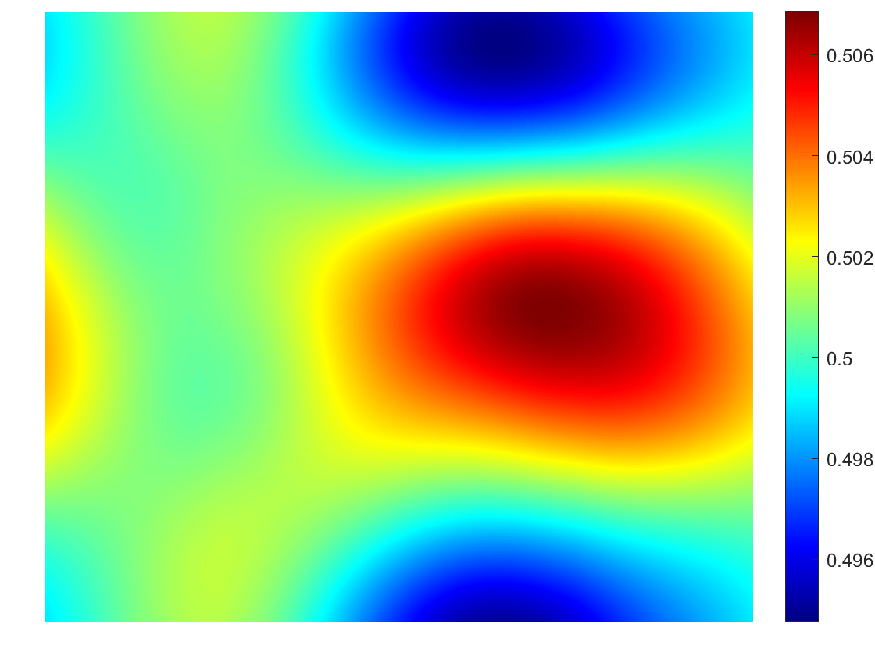}}\hspace{-0.1cm}
		{\includegraphics[width=0.26\textwidth]{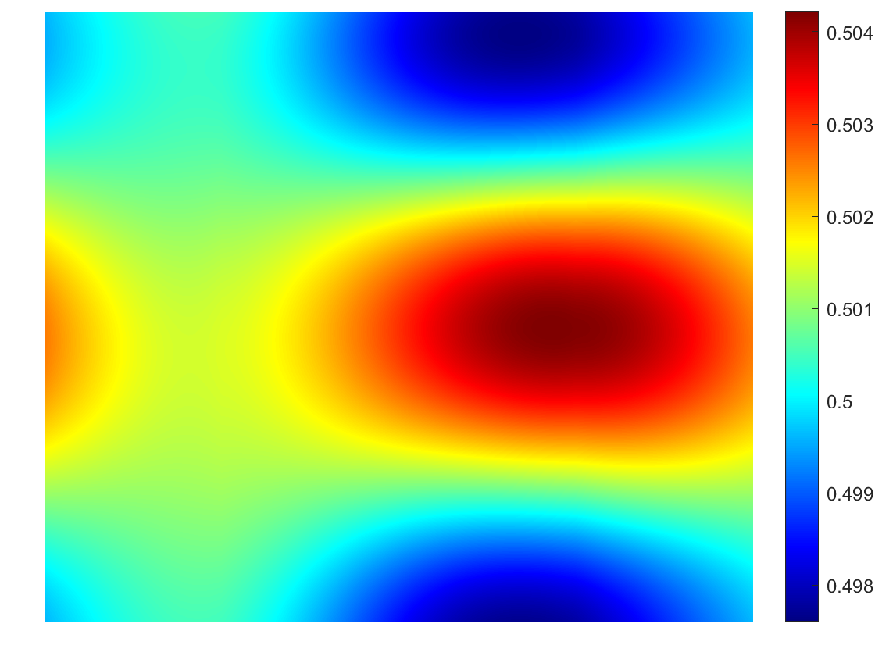}}\hspace{-0.1cm}		    {\includegraphics[width=0.26\textwidth]{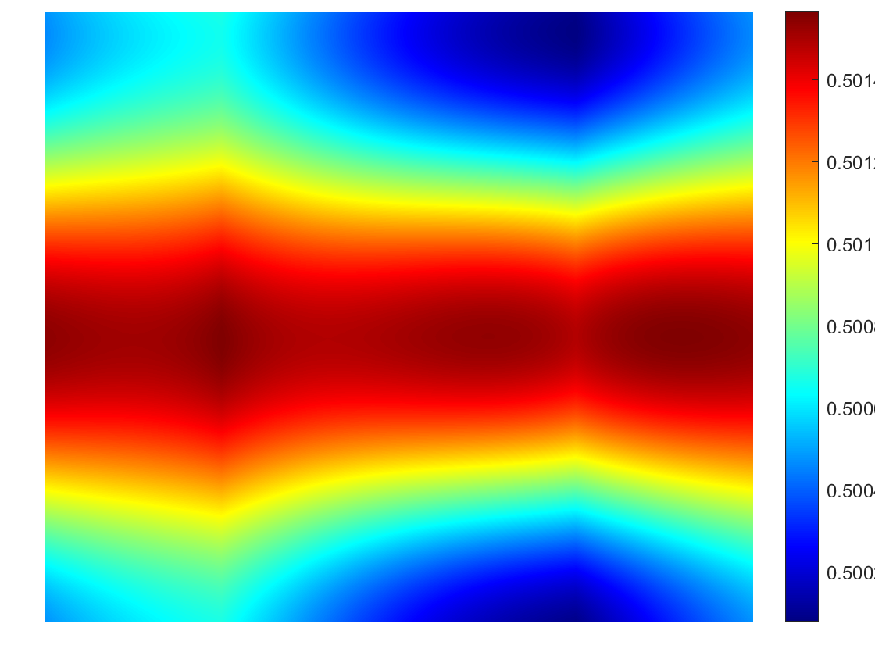}}\hspace{-0.1cm}
		{\includegraphics[width=0.26\textwidth]{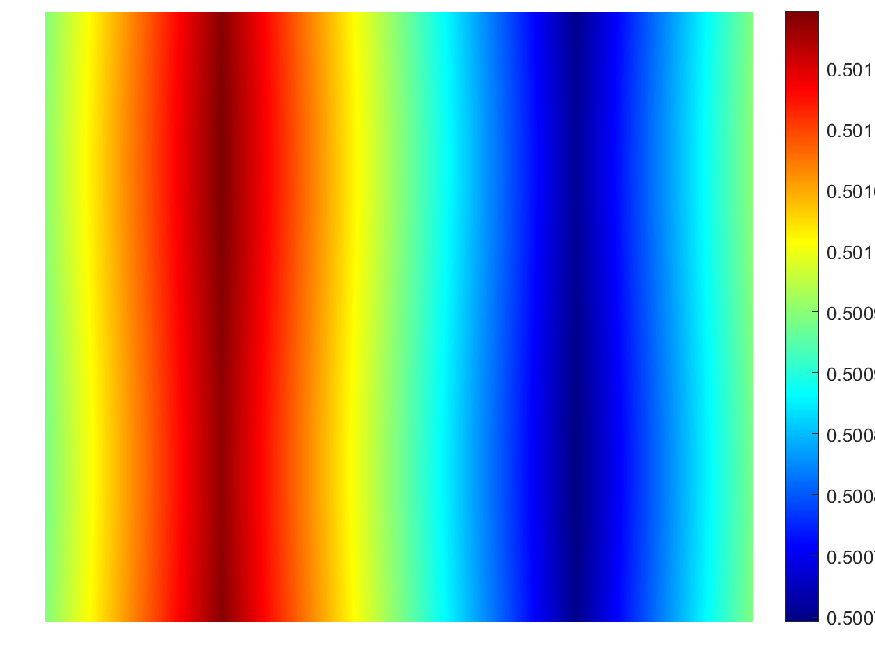}}}
	\caption{Profiles of concentrations produced by the ETD2 scheme with $\tau=0.01$ at $t$ = 0.01, 0.02, 0.05 and 0.3 (left to right) for the PNP equations with $\rho_0=1$. Top: positive icon $p$, bottom, negative icon $n$.}
	\label{saline_1_phase}
\end{figure}
\begin{figure}[!ht]
	\centerline{
		{\includegraphics[width=0.34\textwidth]{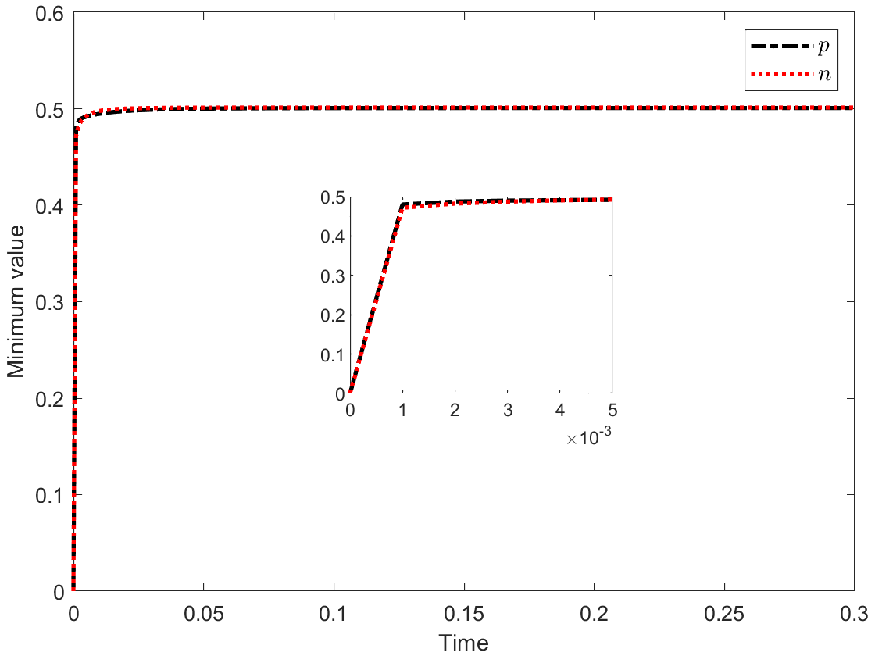}}\hspace{-0.1cm}
		{\includegraphics[width=0.34\textwidth]{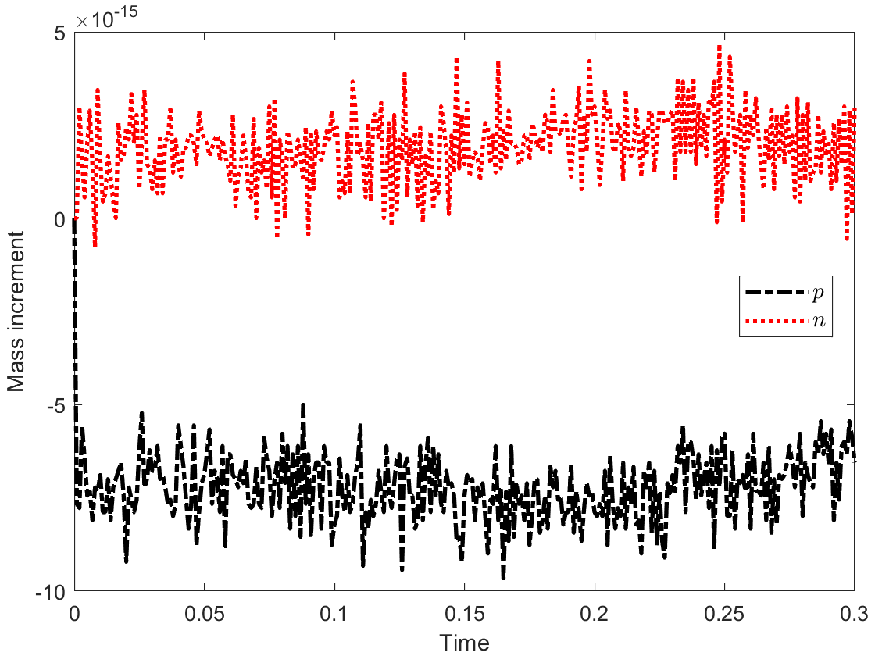}}\hspace{-0.1cm}
		{\includegraphics[width=0.34\textwidth]{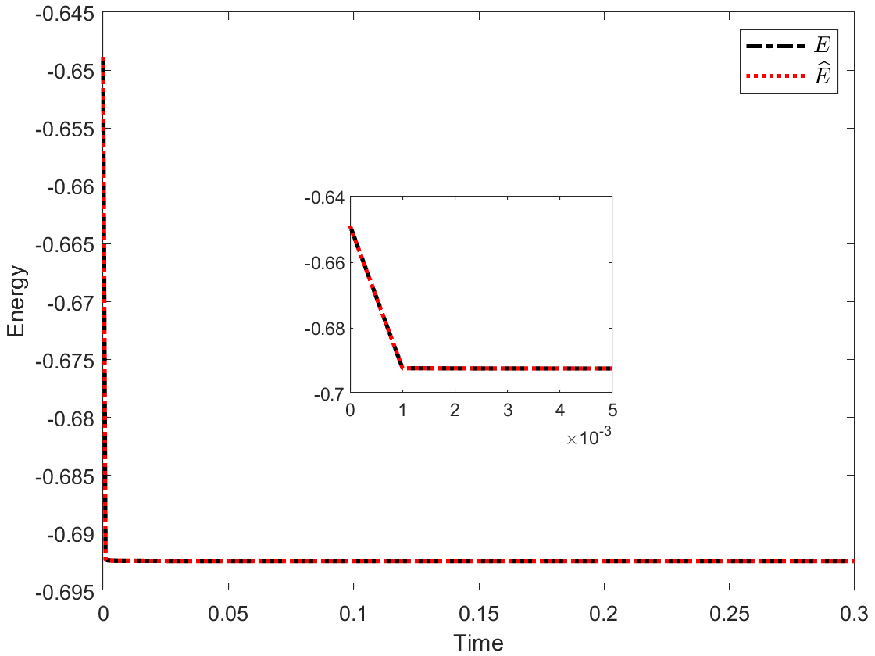}}}
	\caption{Evolutions of the minimum value, mass increment and the energy of numerical solution produced by the ETD2 scheme with $\tau=0.01$ for PNP equations with $\rho_0=1$.}
	\label{saline_1_phy}
\end{figure}

Next we increase the fixed charge density to $\rho_0=10$. Figure \ref{saline_10_phase} illustrates the snapshots of numerical solutions produced by ETD2 scheme at $t=$ 0.001, 0.005, 0.02, and 0.05, respectively. The corresponding evolutions of the minimum value, mass conservation and the energy is plotted in Figure \ref{saline_10_phy}. It is observed that the positivity preservation, mass conservation and energy stability are also well preserved.
\begin{figure}[!ht]
	\centerline{
		{\includegraphics[width=0.26\textwidth]{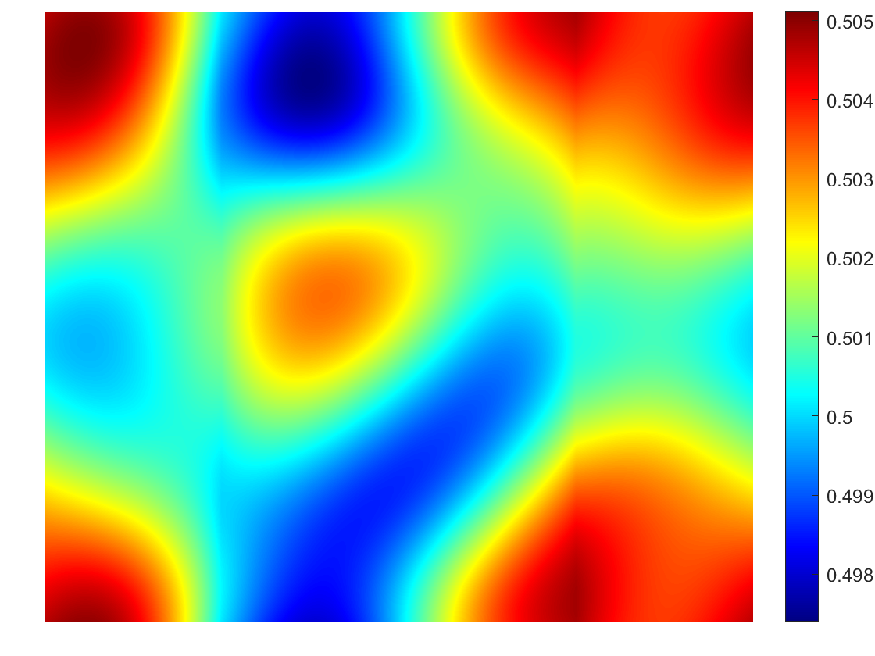}}\hspace{-0.1cm}
		{\includegraphics[width=0.26\textwidth]{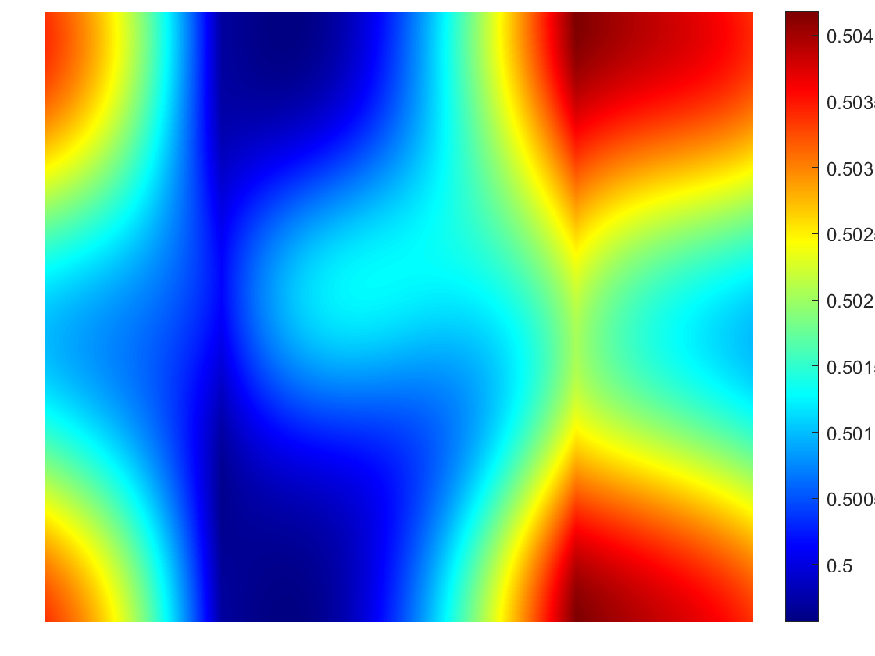}}\hspace{-0.1cm}		    {\includegraphics[width=0.26\textwidth]{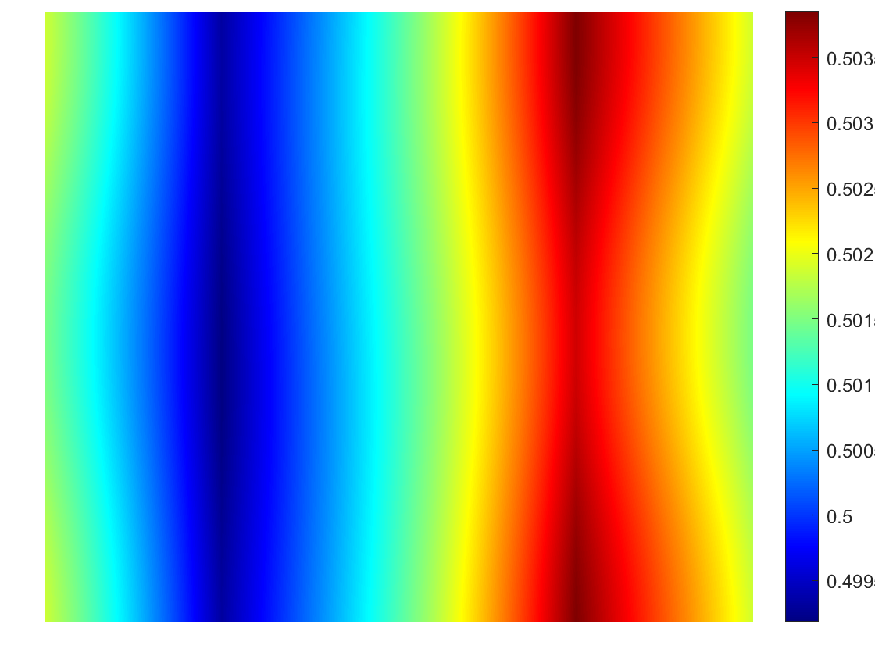}}\hspace{-0.1cm}
		{\includegraphics[width=0.26\textwidth]{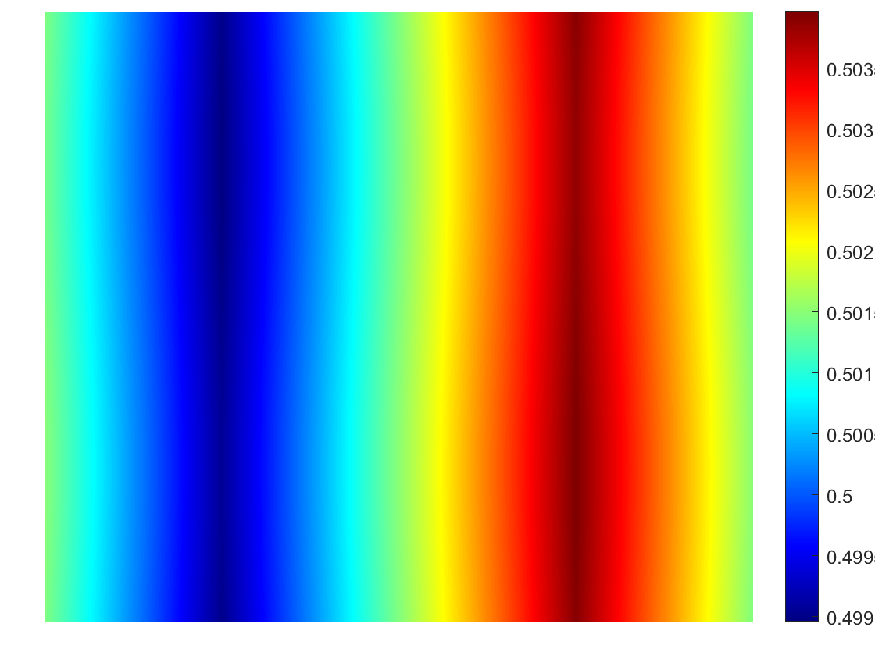}}}
		\centerline{
		{\includegraphics[width=0.26\textwidth]{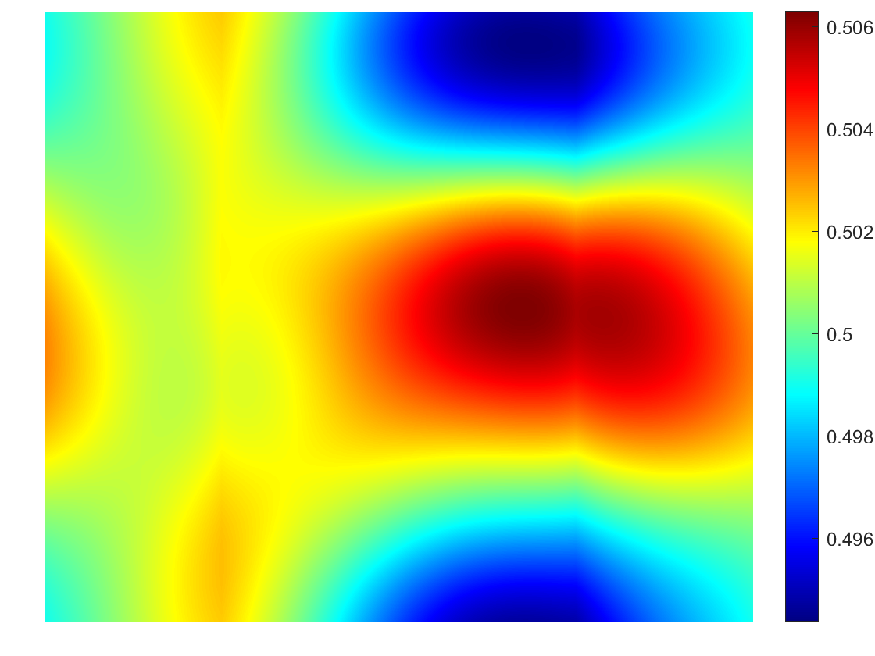}}\hspace{-0.1cm}
		{\includegraphics[width=0.26\textwidth]{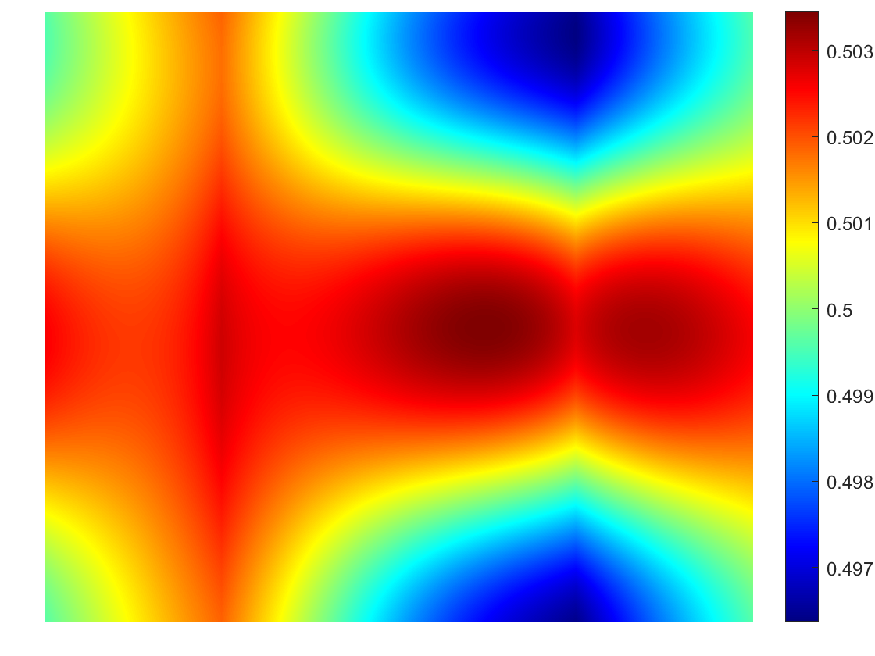}}\hspace{-0.1cm}		    {\includegraphics[width=0.26\textwidth]{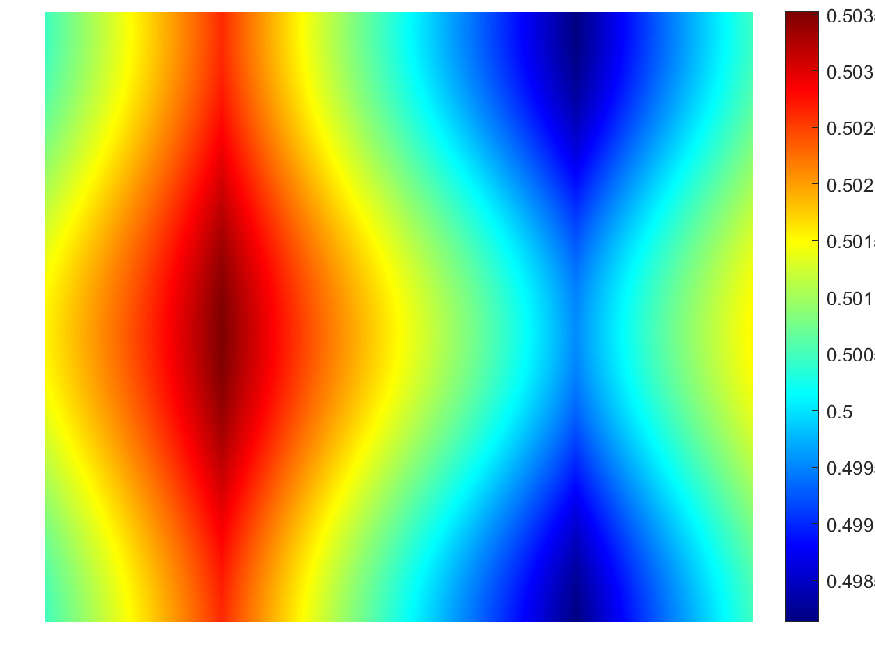}}\hspace{-0.1cm}
		{\includegraphics[width=0.26\textwidth]{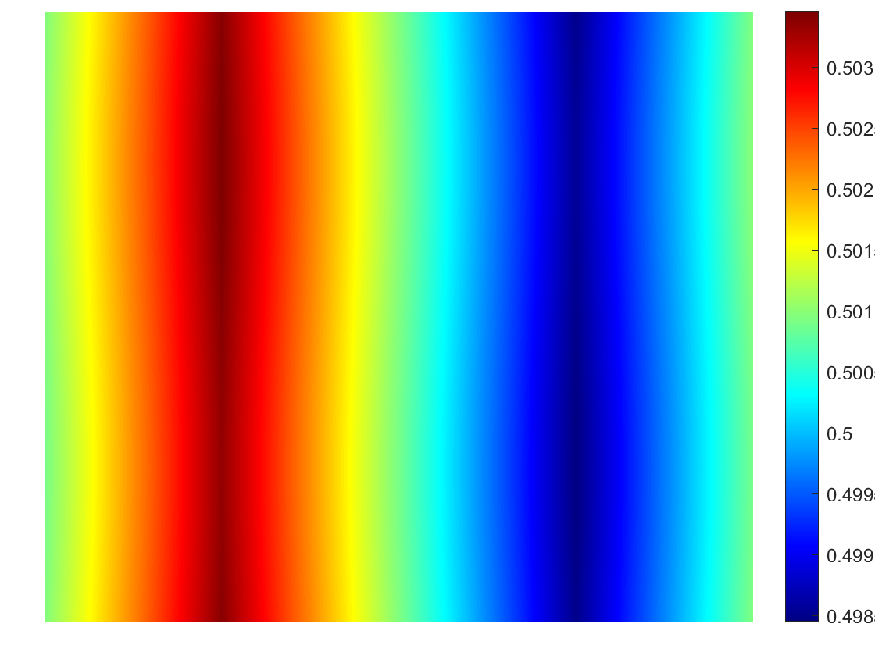}}}
	\caption{Profiles of concentrations produced by the ETD2 scheme with $\tau=0.01$ at $t$ = 0.01, 0.02, 0.05 and 0.3 (left to right) for the PNP equations with $\rho_0=10$. Top: positive icon $p$, bottom, negative icon $n$.}
	\label{saline_10_phase}
\end{figure}
\begin{figure}[!ht]
	\centerline{
		{\includegraphics[width=0.34\textwidth]{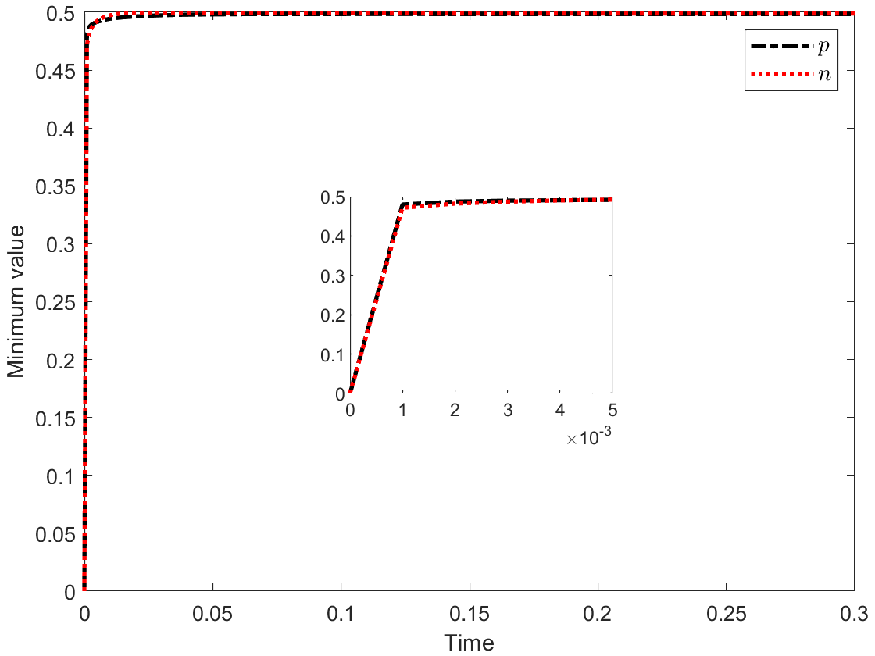}}\hspace{-0.1cm}
		{\includegraphics[width=0.34\textwidth]{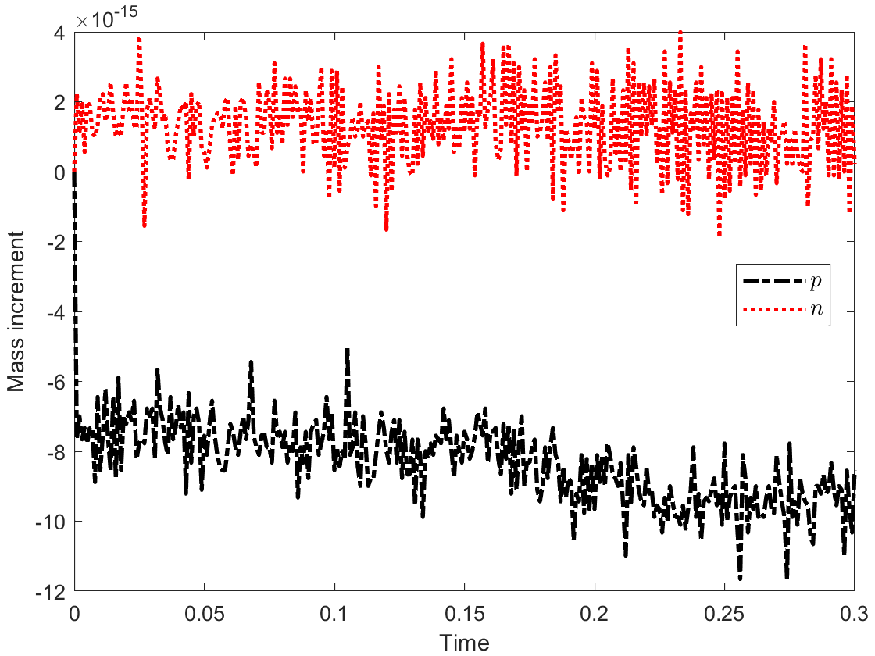}}\hspace{-0.1cm}
		{\includegraphics[width=0.34\textwidth]{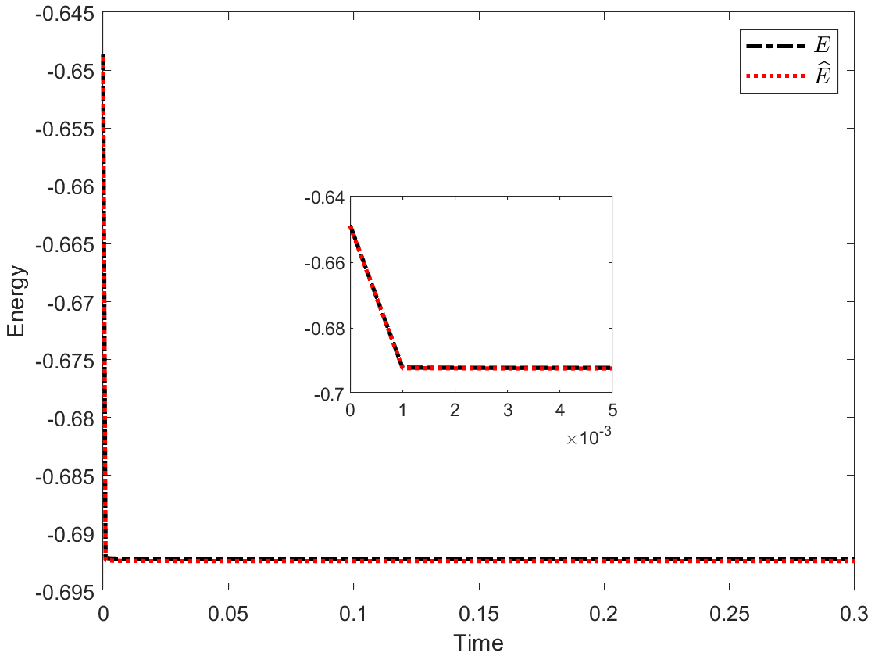}}}
	\caption{Evolutions of the minimum value, mass increment and the energy of numerical solution produced by the ETD2 scheme with $\tau=0.01$ for PNP equations with $\rho_0=10$.}
	\label{saline_10_phy}
\end{figure}

Then we numerically test extreme cases by increasing the fixed charge density to $\rho_0=50$. As shown in Figure \ref{saline_50_phase}, counterions of positive icon in the steady state are strongly attracted to the fixed charge due to electrostatic interactions, giving rise to a peak at $x=0.25$ and extremely low counterion concentration in the rest of the region. Figure \ref{saline_50_phy} illustrates the evolutions of the minimum value, mass conservation and the energy of the numerical solutions produced by ETD2 schemes, from which the positivity preservation, mass conservation and energy stability are also well preserved, verifying the theoretical results of our numerical scheme.
\begin{figure}[!ht]
	\centerline{
		{\includegraphics[width=0.26\textwidth]{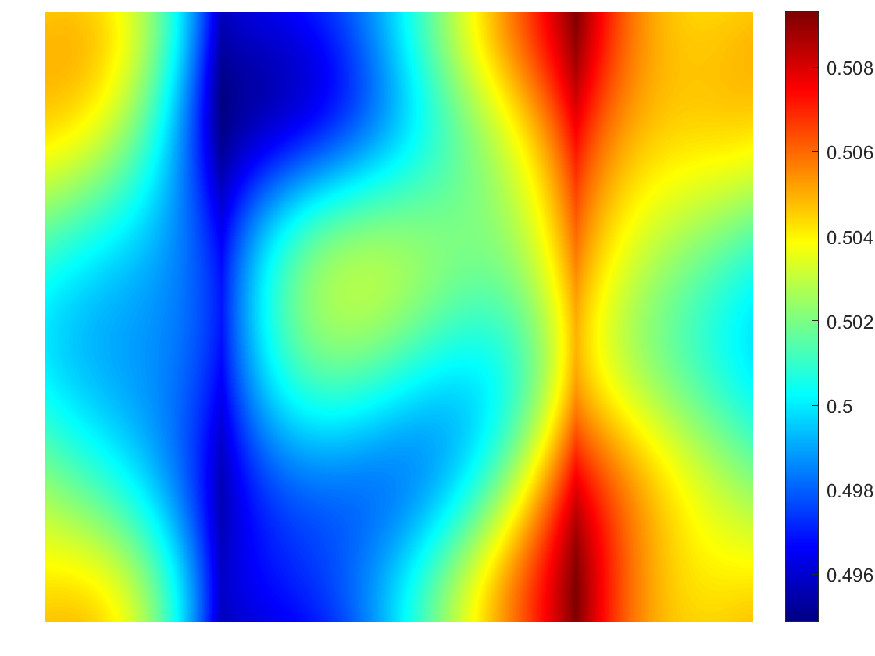}}\hspace{-0.1cm}
		{\includegraphics[width=0.26\textwidth]{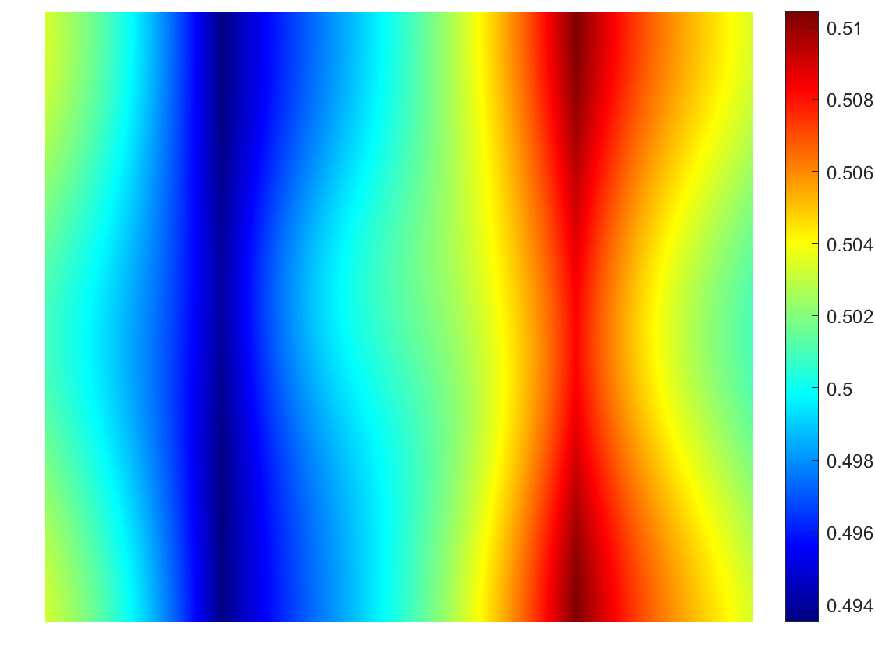}}\hspace{-0.1cm}		    {\includegraphics[width=0.26\textwidth]{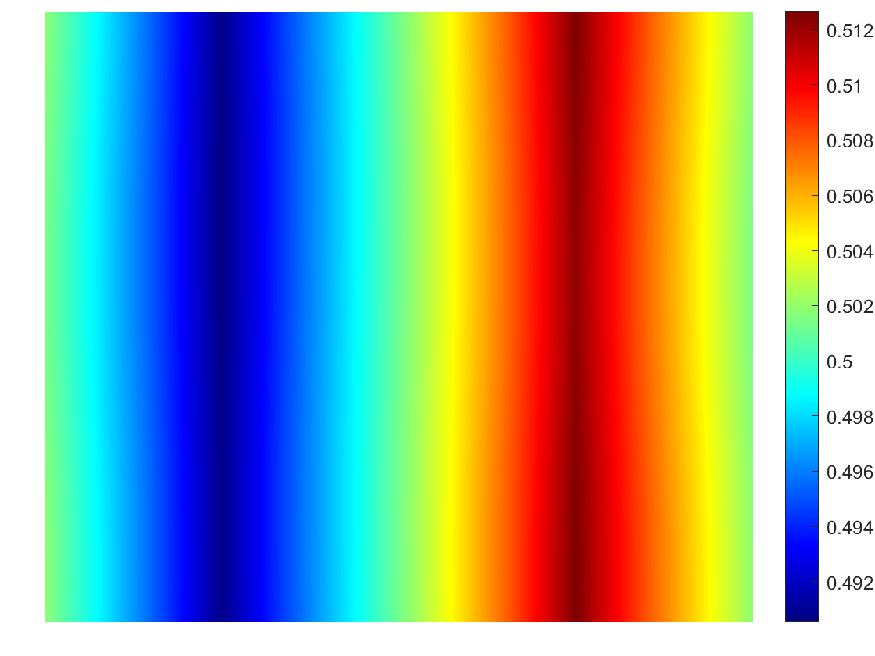}}\hspace{-0.1cm}
		{\includegraphics[width=0.26\textwidth]{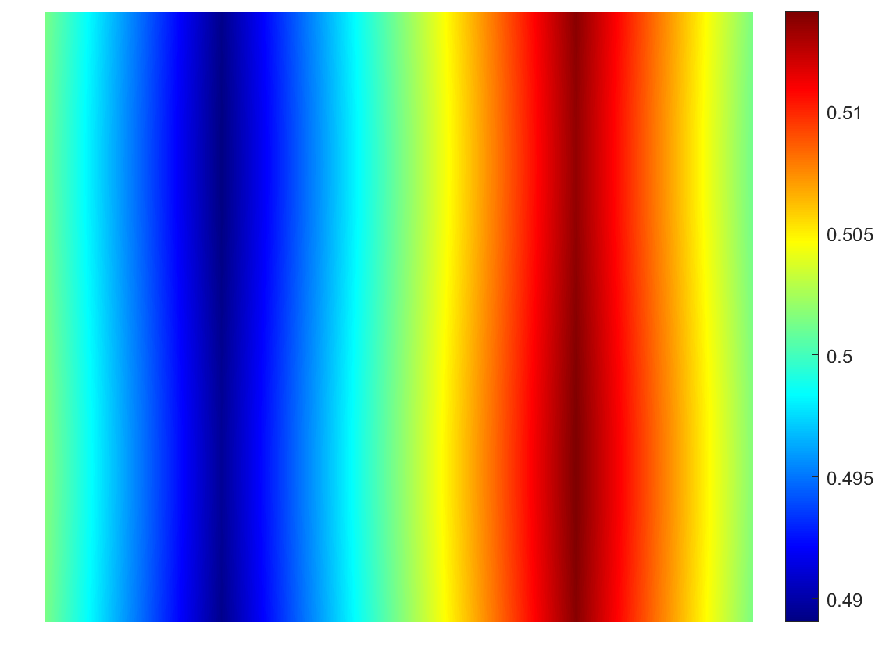}}}
		\centerline{
		{\includegraphics[width=0.26\textwidth]{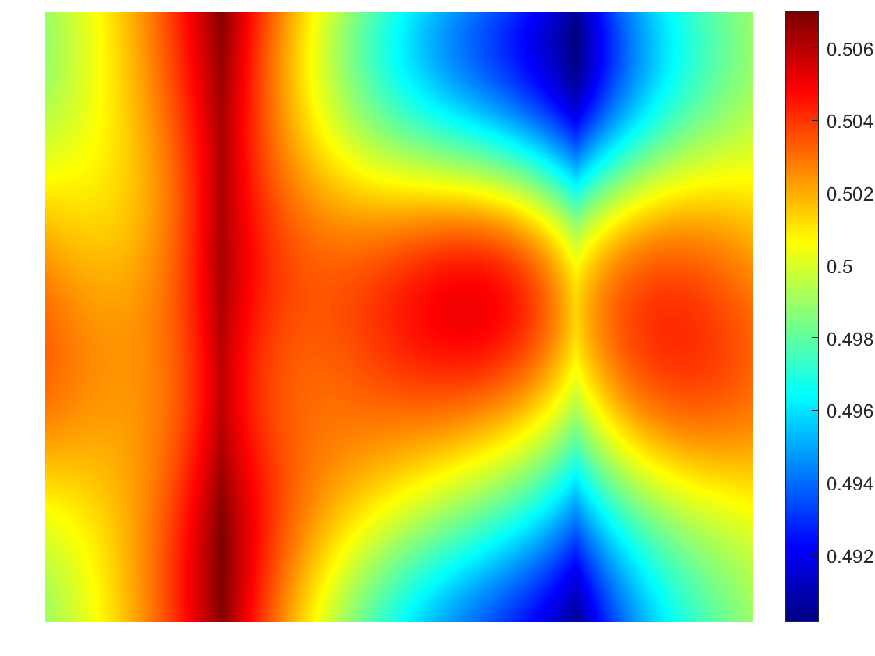}}\hspace{-0.1cm}
		{\includegraphics[width=0.26\textwidth]{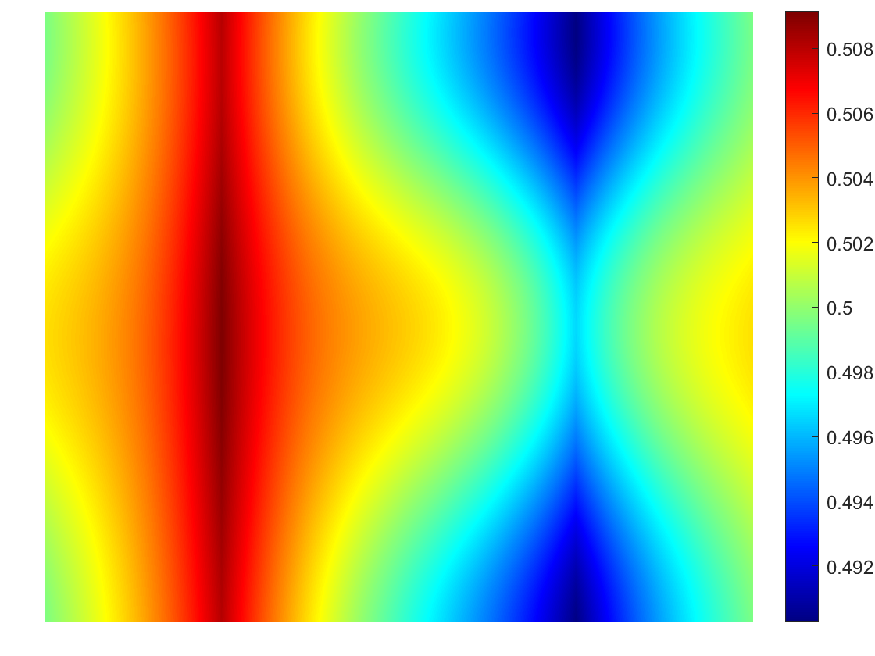}}\hspace{-0.1cm}		    {\includegraphics[width=0.26\textwidth]{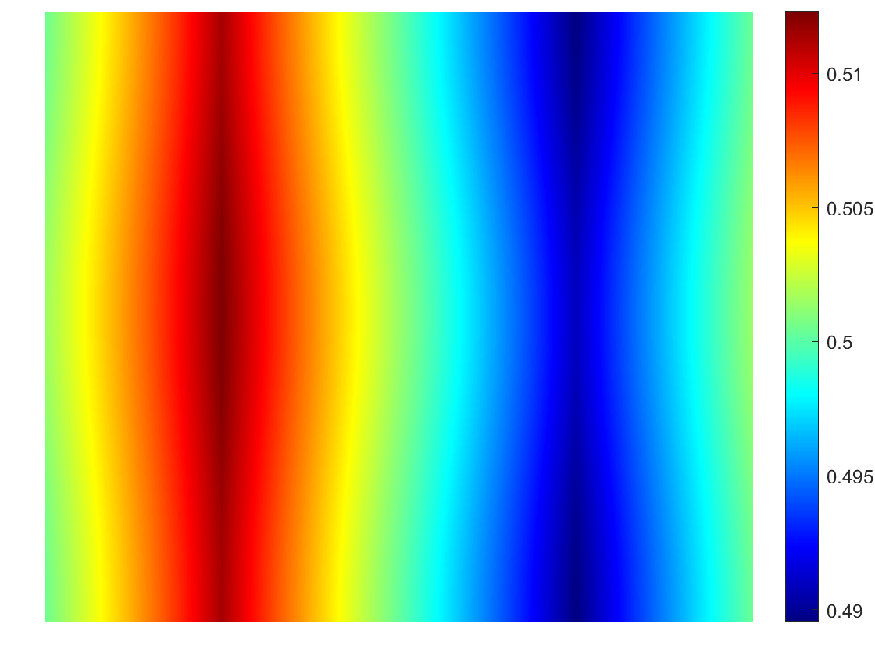}}\hspace{-0.1cm}
		{\includegraphics[width=0.26\textwidth]{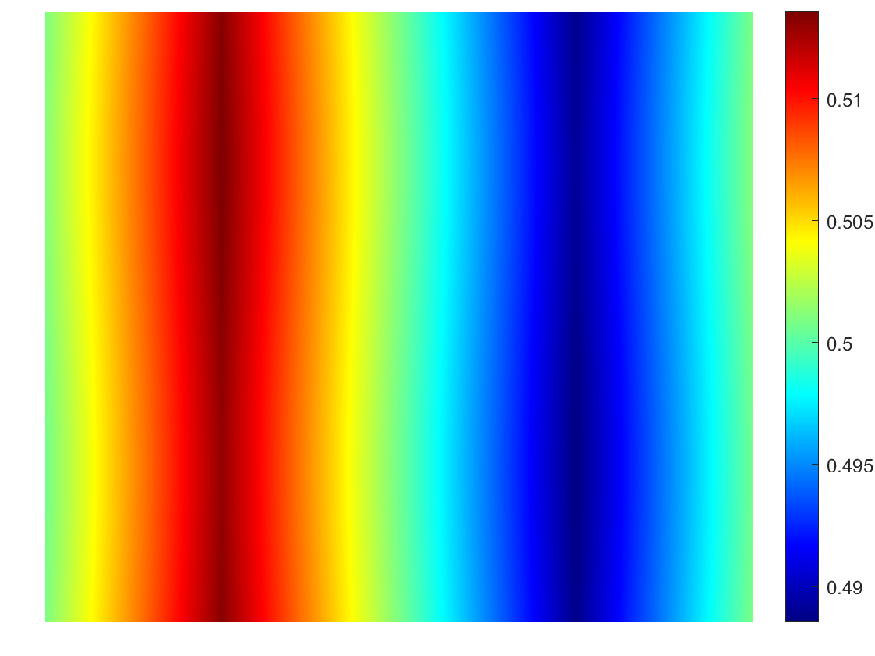}}}
	\caption{Profiles of concentrations produced by the ETD2 scheme with $\tau=0.01$ at $t$ = 0.01, 0.02, 0.05 and 0.3 (left to right) for the PNP equations with $\rho_0=50$. Top: positive icon $p$, bottom, negative icon $n$.}
	\label{saline_50_phase}
\end{figure}
\begin{figure}[!ht]
	\centerline{
		{\includegraphics[width=0.34\textwidth]{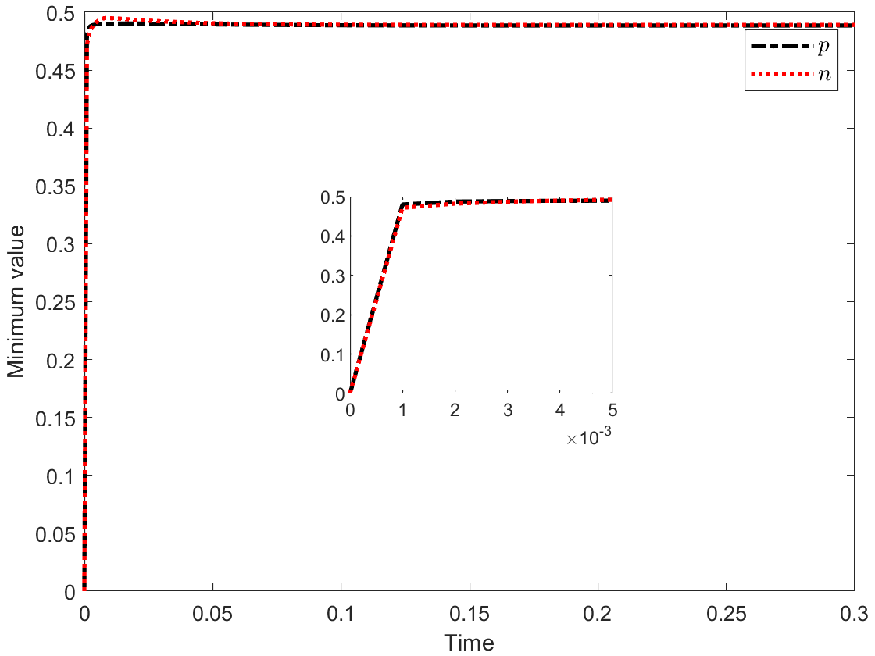}}\hspace{-0.1cm}
		{\includegraphics[width=0.34\textwidth]{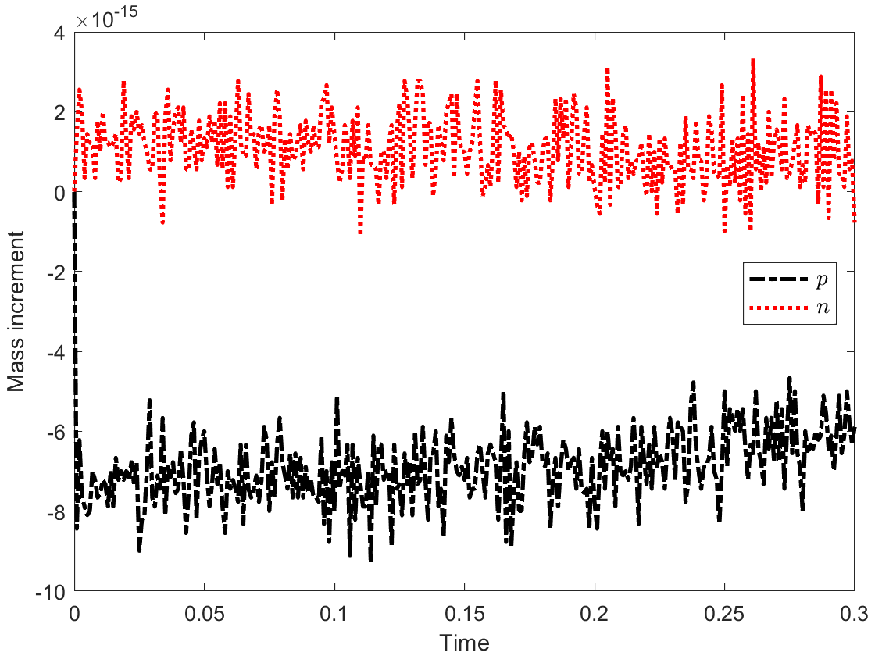}}\hspace{-0.1cm}
		{\includegraphics[width=0.34\textwidth]{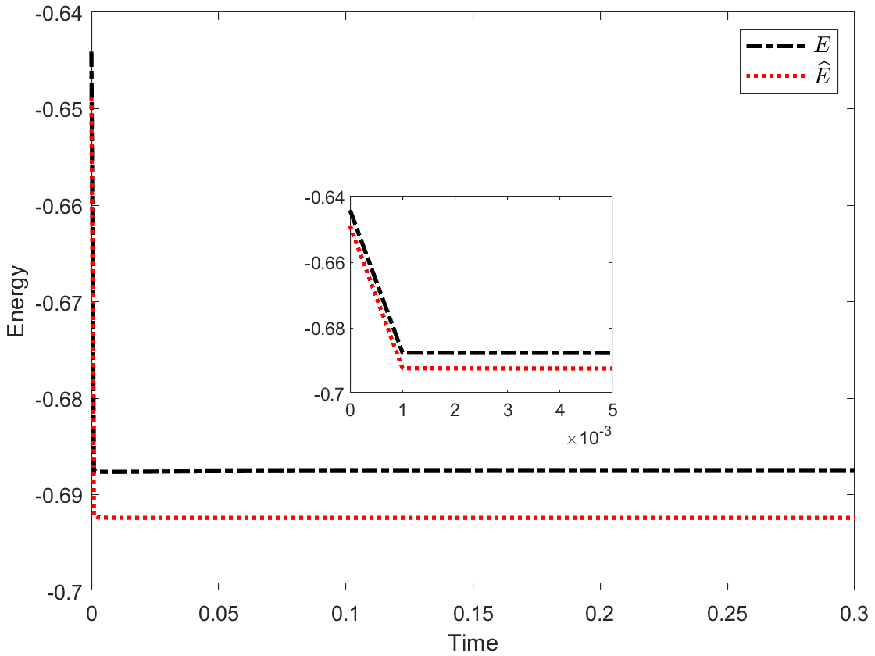}}}
	\caption{Evolutions of the minimum value, mass increment and the energy of numerical solution produced by the ETD2 scheme with $\tau=0.01$ for PNP equations with $\rho_0=50$.}
	\label{saline_50_phy}
\end{figure}
\section{Concluding remarks}\label{sec:con}
In this paper, we develop positivity-preserving and mass conservative linear schemes for the PNP equations, which are first- or second-order accurate in time. Based on the Slotboom transformation, we combine the convective and diffusion terms into a single self-adjoint elliptic operator, which allows for a quasi-symmetric spatial discretization by the second order finite difference method. Positivity preservation, mass conservation and energy stability are rigorously analyzed for the proposed schemes. A series of numerical examples are performed to confirm the theoretical findings and demonstrate the computational efficiency of the proposed schemes. 
Our ongoing work includes fully discrete error analysis for the ETD1 and ETD2 schemes as both the mesh size and the time step size approach zero, and application to more Wasserstein gradient flow model (e.g, Keller-Segel model) to design structure-preserving schemes.







\begin{thebibliography}{00}
\bibitem{BTA04} Bazant MZ, Thornton K, Ajdari A. Diffuse-charge dynamics in electrochemical systems. Physical Review E—Statistical, Nonlinear, and Soft Matter Physics. 2004, 70(2), 021506.
\bibitem{BP94} Berman A, Plemmons RJ. Nonnegative Matrices in the Mathematical Sciences. Society for Industrial and Applied Mathematics, 1994.
\bibitem{BCV14} Bessemoulin-Chatard M, Chainais-Hillairet C, Vignal M H. Study of a finite volume scheme for the drift-diffusion system. Asymptotic behavior in the quasi-neutral limit. SIAM Journal on Numerical Analysis, 2014, 52(4): 1666-1691.
\bibitem{CJLL23} Cai Y, Ju L, Lan R, Li J. Stabilized exponential time differencing scheme for the convective Allen-Cahn equation. Communications in Mathematical Sciences, 2023, 1(21): 127-150.
\bibitem{CE93} Chen D, Eisenberg R. Poisson-Nernst-Planck (Pnp) Theory of Ionic Channels. Biophysical Journal 1993, 64(2), A22.
\bibitem{CWWW19} Chen W, Wang C, Wang X, Wise S M. Positivity-preserving, energy stable numerical schemes for the Cahn-Hilliard equation with logarithmic potential. Journal of Computational Physics: X, 2019, 3, 100031.
\bibitem{CS22a} Cheng Q, Shen J. A new Lagrange multiplier approach for constructing structure preserving schemes, I. Positivity preserving. Computer Methods in Applied Mechanics and Engineering. 2022, 391:114585.
\bibitem{CS22b} Cheng Q, Shen J. A new Lagrange multiplier approach for constructing structure preserving schemes, II. Bound preserving. SIAM Journal on Numerical Analysis. 2022, 60(3):970-998.
\bibitem{DWZ19} Ding J, Wang Z, Zhou S. Positivity preserving finite difference methods for Poisson-Nernst-Planck equations with steric interactions: Application to slit-shaped nanopore conductance. Journal of Computational Physics, 2019, 397, 108864.
\bibitem{DJLQ19} Du Q, Ju L, Li X, Qiao Z. Maximum principle preserving exponential time differencing schemes for the nonlocal Allen--Cahn equation. SIAM Journal on Numerical Analysis, 2019, 57(2): 875-898.
\bibitem{DJLQ21} Du Q, Ju L, Li X, Qiao Z. Maximum bound principles for a class of semilinear parabolic equations and exponential time-differencing schemes. SIAM review. 2021, 63(2):317-359.
\bibitem{DWZ23} Ding J, Wang C, Zhou S. Convergence analysis of structure-preserving numerical methods based on Slotboom transformation for the Poisson-Nernst-Planck equations. Communications in Mathematical Science, 2023, 21(2): 459-484.
\bibitem{DZ24} Ding J, Zhou S. Second-order, positive, and unconditional energy dissipative scheme for modified Poisson–Nernst–Planck equations. Journal of Computational Physics. 2024, 510, 113094.
\bibitem{DHQZ24} Dong L, He D, Qin Y, Zhang Z. A positivity-preserving, linear, energy stable and convergent numerical scheme for the Poisson–Nernst–Planck (PNP) system. Journal of Computational and Applied Mathematics. 2024, 444, 115784.
\bibitem{E98} Eisenberg B. Ionic channels in biological membranes-electrostatic analysis of a natural nanotube. Contemporary Physics. 1998, 39(6):447-466.
\bibitem{FRB83} Fichtner W, Rose D J, Bank R E. Semiconductor device simulation. SIAM Journal on Scientific and Statistical Computing. 1983, 4(3):391-415.
\bibitem{HPY19} He D, Pan K, Yue X. A positivity preserving and free energy dissipative difference scheme for the Poisson–Nernst–Planck system. Journal of Scientific Computing. 2019, 81:436-458.
\bibitem{HH20} Hu J, Huang X. A fully discrete positivity-preserving and energy-dissipative finite difference scheme for Poisson–Nernst–Planck equations. Numerische Mathematik. 2020, 145(1):77-115.
\bibitem{HS21} Huang F, Shen J. Bound/positivity preserving and energy stable SAV schemes for dissipative systems: applications to Keller-Segel and Poisson-Nernst-Planck equations. SIAM Journal on Scientific Computing, 2021, 43(3): A1832 - A1857.
\bibitem{JJLL21} Jiang K, Ju L, Li J, Li X. Unconditionally stable exponential time differencing schemes for the mass-conserving Allen-Cahn equation with nonlocal and local effects. Numerical Methods for Partial Differential Equations, 2021, 38(6): 1636-1657.
\bibitem{KHTC21} Kai B, Hans G B, Tobias L, Christoph J. A maximum principle for drift-diffusion equations and the Scharfetter-Gummel discretization. In: Scientific Computing in Electrical Engineering (SCEE 2020). CHAM: Springer; 2021. 45-52.
\bibitem{KMX17} Kinderlehrer D, Monsaingeon L, Xu X. A Wasserstein gradient flow approach to Poisson-Nernst-Planck equations. ESAIM: Control, Optimisation and Calculus of Variations. 2017, 23(1):137-164.
\bibitem{LLCJ23} Lan R, Li J, Cai Y, Ju L. Operator splitting based structure preserving numerical schemes for the mass conserving convective Allen-Cahn equation. Journal of Computational Physics, 2023, 472, 111695.
\bibitem{LYZ20} Li B, Yang J, Zhou Z. Arbitrarily high-order exponential cut-off methods for preserving maximum principle of parabolic equations. SIAM Journal on Scientific Computing, 2020, 42(6): A3957-A3978.
\bibitem{LCLJ23} Li J, Cai Y, Lan R, Ju L, Wang X. Second-order semi-Lagrangian exponential time differencing method with enhanced error estimate for the convective Allen-Cahn equation. Journal of Scientific Computing, 2023, 97, 7.
\bibitem{LJCF21} Li J,  Ju L,  Cai Y, Feng X. Unconditionally maximum bound principle preserving linear schemes for the conservative Allen-Cahn equation with nonlocal constraint. Journal of Scientific Computing, 2021, 87, 98.
\bibitem{LWWY21} Liu C, Wang C, Wise S, Yue X, Zhou S. A positivity-preserving, energy stable and convergent numerical scheme for the Poisson-Nernst-Planck system. Mathematics of Computation. 2021, 90(331):2071-2106.
\bibitem{LWWY23} Liu C, Wang C, Wise SM, Yue X, Zhou S. A second order accurate, positivity preserving numerical method for the Poisson–Nernst–Planck system and its convergence analysis. Journal of Scientific Computing. 2023, 97(1), 23.
\bibitem{LM21} Liu H, Maimaitiyiming W. Efficient, positive, and energy stable schemes for multi-D Poisson-Nernst-Planck systems. Journal of Scientific Computing, 2021, 87, 92.
\bibitem{LM23} Liu H, Maimaitiyiming W. A dynamic mass transport method for Poisson-Nernst-Planck equations. Journal of Computational Physics. 2023, 473, 111699.
\bibitem{LW14} Liu, H, Wang Z. A free energy satisfying finite difference method for Poisson-Nernst-Planck equations.  Journal of Computional Physics, 2014, 268: 363-376.
\bibitem{LWYY22} Liu H, Wang Z, Yin P, Yu H. Positivity-preserving third order DG schemes for Poisson–Nernst–Planck equations. Journal of Computational Physics. 2022, 452, 110777.
\bibitem{MRS12} Markowich PA, Ringhofer CA, Schmeiser C. Semiconductor equations. Springer Science \& Business Media, 2012.
\bibitem{MXL16}  Metti M, Xu J, Liu C, Energetically stable discretizations for charge transport and electrokinetic models. Journal of Computational Physics, 2016, 306: 1-18.
\bibitem{MG14} Mirzadeh M, Gibou F. A conservative discretization of the Poisson–Nernst–Planck equations on adaptive Cartesian grids. Journal of Computational Physics. 2014, 274:633-653.
\bibitem{PS09} Prohl A, Schmuck M. Convergent discretizations for the Nernst–Planck–Poisson system. Numerische Mathematik. 2009, 111:591-630.
\bibitem{QQSZ24} Qiao T, Qiao Z, Sun S, Zhou S. An unconditionally energy stable linear scheme for Poisson–Nernst–Planck equations. Journal of Computational and Applied Mathematics. 2024, 443, 115759.
\bibitem{SX21} Shen J, Xu J. Unconditionally positivity preserving and energy dissipative schemes for Poisson–Nernst–Planck equations. Numerische Mathematik. 2021, 148(3):671-697.
\bibitem{ST22} Su S, Tang H. A positivity-preserving and free energy dissipative hybrid scheme for the Poisson-Nernst-Planck equations on polygonal and polyhedral meshes. Computers \& Mathematics with Applications. 2022, 108:33-48.
\bibitem{T53} Teorell T. Transport processes and electrical phenomena in ionic membranes. Progress in biophysics and biophysical chemistry. 1953, 3:305-369.
\bibitem{TC24} Tong F, Cai Y. Positivity preserving and mass conservative projection method for the Poisson–Nernst–Planck equation. SIAM Journal on Numerical Analysis, 2024, 62(4):2004-2024.
\bibitem{XC20} Xie D, Chao Z. A finite element iterative solver for a PNP ion channel model with Neumann boundary condition and membrane surface charge. Journal of Computational Physics. 2020, 423, 109915.
\bibitem{XL20} Xie D, Lu B. An effective finite element iterative solver for a Poisson--Nernst--Planck ion channel model with periodic boundary conditions. SIAM Journal on Scientific Computing. 2020, 42(6):B1490-B1516.
\bibitem{YYZ21} Yang J,  Yuan Z,  Zhou Z. Arbitrarily high-order maximum bound preserving schemes with cut-off postprocessing for Allen-Cahn equations. Journal of Scientific Computing, 2021, 90, 76.
 \end{thebibliography}


\end{document}